\newtheorem{theorem}{Theorem}[section]
\newtheorem{lemma}[theorem]{Lemma}
\newtheorem{proposition}[theorem]{Proposition}
\newtheorem{corollary}[theorem]{Corollary}
\newcommand{\parder}[3][Default]{
	\frac{\partial \ifthenelse{\equal{#1}{Default}}{}{^{#1}}#2}{
              \partial #3 \ifthenelse{\equal{#1}{Default}}{}{^{#1}}}}
\numberwithin{equation}{section}
\newcommand{\nolisttopbreak}{\vspace{\topsep}\nobreak\@afterheading}
\newcommand{\jac}{{\mathcal{J}}}
\newcommand{\hess}{{\mathcal{H}}}
\newcommand{\ie}{\operatorname{IE}}
\newcommand{\pe}{\operatorname{PE}}
\newcommand{\N}{\mathbb{N}}
\newcommand{\Z}{\mathbb{Z}}
\newcommand{\Mat}{\operatorname{Mat}}
\newcommand{\GL}{\operatorname{GL}}
\newcommand{\rk}{\operatorname{rk}}
\newcommand{\tr}{\operatorname{tr}}
\newcommand{\trdeg}{\operatorname{trdeg}}
\newcommand{\chr}{\operatorname{char}}
\newcommand{\tp}{^{\rm t}}
\newcommand{\sdots}{\vdots\,\vdots\,\vdots}
\newcommand{\zeromat}{\mbox{\begin{tikzpicture} \useasboundingbox (-0.1,-0.125) -- (0.1,0.125);
\draw (-0.05,0.05) -- node[anchor=center]{\rm 0} (0.05,-0.05); \end{tikzpicture}}}
\begin{document}

\title{Quadratic homogeneous polynomial maps $H$ and Keller maps $x+H$ with 
$3 \le \rk \jac H \le 4$}

\author{Michiel de Bondt}

\maketitle

\begin{abstract}
We compute by hand all quadratic homogeneous polynomial maps $H$ and all 
Keller maps of the form $x + H$, for which $\rk \jac H = 3$, over a field 
of arbitrary characteristic.

Furthermore, we use computer support to compute Keller maps of the form 
$x + H$ with $\rk \jac H = 4$, namely:
\begin{compactitem}

\item all such maps in dimension $5$ over fields with $\frac12$;

\item all such maps in dimension $6$ over fields without $\frac12$.

\end{compactitem}
We use these results to prove the following over fields of arbitrary 
characteristic: for Keller maps $x + H$ for which $\rk \jac H \le 4$, 
the rows of $\jac H$ are dependent over the base field.
\end{abstract}

\section{Introduction}

Let $n$ be a positive integer and let $x = (x_1,x_2,\ldots,x_n)$ be an $n$-tuple
of variables. 
We write $a|_{b=c}$ for the result of substituting $b$ by $c$ in $a$.

Let $K$ be any field. In the scope of this introduction, denote
by $L$ an unspecified (but big enough) field, which contains $K$ or even $K(x)$.

For a polynomial or rational map $H = (H_1,H_2,\ldots,H_m) \in L^m$, write $\jac H$ or 
$\jac_x H$ for the Jacobian matrix of $H$ with respect to $x$. So
$$
\jac H = \jac_x H = \left(\begin{array}{cclc}
\parder{}{x_1} H_1 & \parder{}{x_2} H_1 & \cdots & \parder{}{x_n} H_1 \\
\parder{}{x_1} H_2 & \parder{}{x_2} H_2 & \cdots & \parder{}{x_n} H_2 \\
\vdots & \vdots & \sdots & \vdots \\
\parder{}{x_1} H_m & \parder{}{x_2} H_m & \cdots & \parder{}{x_n} H_m
\end{array}\right)
$$

Denote by $\rk M$ the rank of a matrix $M$, whose entries are contained in $L$,
and write $\trdeg_K L$ for the transcendence degree of $L$ over $K$. 
It is known that $\rk \jac H \le \trdeg_K K(H)$ for a rational map $H$ of any 
degree, with equality if $K(H) \subseteq K(x)$ is separable, in particular if
$K$ has characteristic zero. This is proved in \cite[Th.\@ 1.3]{1501.06046},
see also \cite[Ths.\@ 10, 13]{DBLP:conf/mfcs/PandeySS16}.

Let a \emph{Keller map} be a polynomial map $F \in K[x]^n$, for
which $\det \jac F \in K \setminus \{0\}$. 
If $H \in K[x]^n$ is homogeneous of degree at least $2$, then $x + H$ is
a Keller map, if and only if $\jac H$ is nilpotent. 

We say that a matrix $M \in \Mat_n(L)$ is \emph{similar over $K$}
to a matrix $\tilde{M} \in \Mat_n(L)$, if there exists a $T \in \GL_n(K)$ such that
$\tilde{M} = T^{-1}MT$. If $\jac H$ is similar over $K$ to a triangular 
matrix, say that $T^{-1}(\jac H)T$ is a triangular matrix, then
$$
\jac \big(T^{-1}H(Tx)\big) = T^{-1}(\jac H)|_{x=Tx}T
$$ 
is triangular as well.

Section $2$ is about quadratic homogeneous maps $H$ in general, with the
focus on compositions with invertible linear maps, and the invariant 
$r = \rk \jac H$. In section $3$, a classification is given for the 
case $r = 3$.

In section $4$, a classification is given for the case $r = 3$,
combined with the nilpotency of $\jac H$. Nilpotency is an invariant of 
conjugations with invertible linear maps, but is not an invariant of 
compositions with invertible linear maps in general. 

In section $5$, we compute all Keller maps $x + H$ with $H$ quadratic homogeneous,
for which $\rk \jac H = 4$ in dimension $5$ over fields with $\frac12$,
and in dimension $6$ over fields without $\frac12$.  
We use these results to prove the following over fields of arbitrary 
characteristic: for Keller maps $x + H$ with $H$ quadratic homogeneous,
for which $\rk \jac H \le 4$, the rows of $\jac H$ are dependent over the base field.

\section{rank {\mathversion{bold}$r$}}

\begin{theorem} \label{rkr}
Let $H \in K[x]^m$ be a quadratic homogeneous polynomial map, and
$r := \rk \jac H$. 

Then there are $S \in \GL_m(K)$ and $T \in \GL_n(K)$, such that
for $\tilde{H} := S H(Tx)$, only the first $\frac12 r^2 + \frac12 r$ rows
of $\jac \tilde{H}$ may be nonzero, and one of the following statements holds:
\begin{enumerate}[\upshape(1)]

\item Only the first $\frac12 r^2 - \frac12 r + 1$ rows of $\jac \tilde{H}$ may be nonzero;

\item $\chr K \ne 2$ and only the first $r$ columns of $\jac \tilde{H}$ are nonzero;

\item $\chr K = 2$ and only the first $r+1$ columns of $\jac \tilde{H}$ are nonzero.

\end{enumerate}
Conversely, $\rk \jac \tilde{H} \le r$ if either $\tilde{H}$ is as in 
{\upshape(2)} or {\upshape(3)}, or $1 \le r \le 2$ and $\tilde{H}$ is as in
{\upshape(1)}.
\end{theorem}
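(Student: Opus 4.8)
The plan is to recast the statement in terms of the symmetric matrices attached to the rows of $\jac H$. Since $H$ is quadratic homogeneous, the $i$-th row of $\jac H$ equals $x\tp C_i$ for a unique symmetric matrix $C_i \in \Mat_n(K)$; when $\chr K = 2$ the diagonal of $C_i$ is forced to vanish, so $C_i$ is alternating. Let $V := \sum_i K C_i$ be the span of these matrices inside $\operatorname{Sym}_n(K)$ (resp. $\operatorname{Alt}_n(K)$ when $\chr K = 2$). Under $\tilde H = S H(Tx)$ the tuple $(C_1,\dots,C_m)$ is replaced by the tuple obtained by applying the congruence $C \mapsto T\tp C T$ to each $C_i$ and then taking the $K$-linear combinations prescribed by $S$; hence $V$ is replaced by $T\tp V T$ while $S$ merely re-selects a basis. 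Composing with a suitable $S$ we may therefore assume $C_1,\dots,C_d$ form a basis of $V$ and $C_{d+1} = \cdots = C_m = 0$, where $d := \dim_K V$; then at most the first $d$ rows of $\jac\tilde H$ are nonzero. The invariant $r = \rk\jac H$ is unchanged by $S$ and $T$, and evaluating at a generic point $a$ gives $r = \dim_K(Va)$, where $Va := \{Ca : C \in V\}$, since the $i$-th row of $(\jac H)|_{x=a}$ is $a\tp C_i = (C_i a)\tp$.

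First I would bound the number of rows, i.e. prove $d \le \frac12 r^2 + \frac12 r = \binom{r+1}{2}$ unconditionally. Write $V_0 := \{C \in V : Ca = 0\}$ for a generic $a$; since $C \mapsto Ca$ is the surjection $V \to Va$ with kernel $V_0$, we have $d = r + \dim_K V_0$, so it suffices to show $\dim_K V_0 \le \frac12 r^2 - \frac12 r = \binom{r}{2}$. For $C \in V_0$ symmetry gives $a \in \ker C$ and $\operatorname{im}(C) \subseteq a^{\perp}$, so each such $C$ induces a symmetric form on the hyperplane $a^{\perp}$; moreover, because some $C' \in V$ has $C'a \ne 0$, we have $Cb \in Vb \cap a^{\perp}$ for $C \in V_0$, and $Vb \cap a^{\perp}$ has dimension at most $r-1$ for generic $b \in a^{\perp}$. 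Thus $V_0$, viewed inside $\operatorname{Sym}_{n-1}$ on $a^{\perp}$, has generic rank at most $r-1$, and induction on $n$ yields $\dim_K V_0 \le \binom{r}{2}$. The characteristic-$2$ case needs extra care, since $a$ may lie on its own orthogonal hyperplane and the restricted form may be degenerate; this is where the alternating structure, forcing ranks to be even, must be used.

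Next I would isolate the dichotomy through the total image $W := \sum_{C \in V}\operatorname{im}(C)$. Congruence sends $W$ to $T\tp W$, and all $C_i$ are supported on the top-left $s \times s$ block precisely when $T\tp W \subseteq \langle e_1,\dots,e_s\rangle$. Choosing $T$ to carry $W$ into a coordinate subspace, alternative (2) (only the first $r$ columns nonzero) is achievable exactly when $\dim_K W \le r$, and alternative (3) exactly when $\dim_K W \le r+1$; in either case all $C_i$ then lie in $\operatorname{Sym}_r$ (resp. $\operatorname{Alt}_{r+1}$), and since $\dim_K\operatorname{Sym}_r = \dim_K\operatorname{Alt}_{r+1} = \binom{r+1}{2}$, the row bound $d \le \binom{r+1}{2}$ of the first assertion follows automatically, with the parity of alternating ranks accounting for the extra column in characteristic $2$.

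It remains to treat the complementary case, and this is the main obstacle: if $\dim_K W > r$ (resp. $> r+1$), I must prove the sharp bound $d \le \frac12 r^2 - \frac12 r + 1 = \binom{r}{2}+1$ of alternative (1), which is $r-1$ smaller than the general bound of the second paragraph. Equivalently, by contraposition, whenever $V$ is large, say $\dim_K V \ge \binom{r}{2}+2$, then $V_0$ is forced near its extremal dimension $\binom{r}{2}$, and I must show this near-extremality of $V_0 \subseteq \operatorname{Sym}(a^{\perp})$ is rigid enough to confine $W$ to an $r$- (resp. $(r+1)$-) dimensional subspace, placing us in alternative (2) or (3). I would prove this by analysing the extremal spaces of symmetric (resp. alternating) matrices of bounded generic rank produced by the induction: maximality of $\dim_K V_0$ pins down the common image of the members of $V_0$, and combining this with the single extra direction contributed by $a$ bounds $\dim_K W$. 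Carrying this rigidity argument through uniformly in both characteristics, handling the even-rank constraint on alternating matrices that yields the $r+1$ of (3), is the delicate part; the converse assertions then follow directly from the three normal forms, using the invariance of $\rk\jac\tilde H$ under $S$ and $T$ established in the first paragraph.
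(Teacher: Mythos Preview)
Your reformulation via the space $V$ of Hessians and the total image $W = \sum_{C \in V}\operatorname{im} C$ is correct and clean: the row count becomes $d := \dim_K V$, alternatives (2) and (3) become $\dim_K W \le r$ and $\dim_K W \le r+1$ respectively, and your inductive proof of the coarse bound $d \le \binom{r+1}{2}$ through the annihilator $V_0 = \{C \in V : Ca = 0\}$ is sound (modulo passing to a large enough field for the generic point $a$, which the paper handles separately in Lemma~\ref{Fqlem}).

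The gap is exactly at what you call the main obstacle. You propose to show that $d \ge \binom{r}{2}+2$ forces small $W$ via rigidity of a near-extremal $V_0$, but $V_0$ is \emph{not} forced near extremality: from $d \ge \binom{r}{2}+2$ you only get $\dim_K V_0 \ge d - r \ge \binom{r-1}{2}+1$, which is $r-2$ below the maximum $\binom{r}{2}$, so no extremal classification applies. If instead you try to run the full dichotomy inductively on $V_0$ (generic rank $\le r-1$), alternative~(1) for $V_0$ yields only $\dim_K V_0 \le \binom{r-1}{2}+1$, hence $d \le r + \binom{r-1}{2}+1 = \binom{r}{2}+2$, off by one; and alternatives~(2)/(3) for $V_0$ bound only $W_0 := \sum_{C \in V_0}\operatorname{im} C$, which says nothing about $W$ since the $r$ matrices in any complement of $V_0$ may have arbitrary images. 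The paper closes this gap by a different device: specializing at a point $v$ where an $r \times r$ minor is invertible produces a block form $\left(\begin{smallmatrix} A & B \\ C & D\end{smallmatrix}\right)$ with $D=0$ and, from the vanishing of the $(r{+}1)$-minors, the key relation $CB=0$. A case analysis on $B$ then either peels off a single row (reducing to rank $r-1$, whose coarse bound $\binom{r}{2}$ already suffices for $1+\binom{r}{2}$ rows), or lands directly in (2)/(3), or---in the residual case, via Lemmas~\ref{Ccoldep1} and~\ref{BCrkr}---forces $\rk C \le r-2$ rather than merely $r-1$, so that the coarse bound on $C$ gives $r + \binom{r-1}{2} = \binom{r}{2}+1$ rows on the nose. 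Your framework does not expose any analogue of the constraint $CB=0$, and that is the missing ingredient.
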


\begin{corollary} \label{rk4}
Let $H \in K[x]^m$ be a quadratic homogeneous polynomial map. 
Suppose that $r := \rk \jac H \le 4$.

Then there are $S \in \GL_m(K)$ and $T \in \GL_n(K)$, such that
for $\tilde{H} := S H(Tx)$, only the first $\frac12 r^2 + \frac12 r$
of $\jac \tilde{H}$ may be nonzero, and one of the following statements holds:
\begin{enumerate}[\upshape (1)]

\item Only the first $r+1$ rows of $\jac \tilde{H}$ may be nonzero;

\item $r = 4$, $\tilde{H}_i \in K[x_1,x_2,x_3]$ for all $i \ge 2$, and 
$\chr K \neq 2$;

\item $r = 4$, $\tilde{H}_i \in K[x_1,x_2,x_3,x_4,x_5^2,x_6^2,\ldots,x_n^2]$ 
for all $i \ge 2$, and $\chr K = 2$;

\item Only the first $r+1$ columns of $\jac \tilde{H}$ may be nonzero;

\item $r = 4$, only the leading principal minor matrix of size $6$ of
$\jac \tilde{H}$ is nonzero, and $\chr K = 2$.

\end{enumerate} 
\end{corollary}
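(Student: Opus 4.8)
\section*{Proof proposal for Corollary~\ref{rk4}}

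The plan is to deduce Corollary~\ref{rk4} from Theorem~\ref{rkr} by an essentially formal reduction, the only substantial input being a case analysis for $r = 4$. First apply Theorem~\ref{rkr} to obtain $S \in \GL_m(K)$, $T \in \GL_n(K)$ and $\tilde H = SH(Tx)$ with at most $\frac12 r^2 + \frac12 r$ nonzero rows in $\jac\tilde H$ and one of the cases {\upshape(1)}--{\upshape(3)} of that theorem. If case {\upshape(2)} holds, then only the first $r$, hence only the first $r+1$, columns of $\jac\tilde H$ are nonzero, which is Corollary~\ref{rk4}{\upshape(4)}; case {\upshape(3)} gives Corollary~\ref{rk4}{\upshape(4)} directly. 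If case {\upshape(1)} holds and $r \le 3$, then $\frac12 r^2 - \frac12 r + 1 \le r + 1$ (with equality when $r = 3$), so only the first $r + 1$ rows of $\jac\tilde H$ are nonzero and we are in Corollary~\ref{rk4}{\upshape(1)}. The entire content therefore lies in case {\upshape(1)} of Theorem~\ref{rkr} with $r = 4$, where $\jac\tilde H$ has at most $\binom{4}{2} + 1 = 7$ nonzero rows; discarding the zero components, we must classify the quadratic homogeneous maps with at most seven components and $\rk\jac = 4$, up to the transformations $\tilde H \mapsto S'\tilde H(T'x)$.

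For this I would encode such a map by the $K$-linear span $V$ of the gradient rows of $\jac\tilde H$; when $\chr K \ne 2$ this is the span of the symmetric matrices $A_i$ with $\tilde H_i = \frac12 x\tp A_i x$, so $V$ is a space of at most seven symmetric forms whose generic rank --- the dimension of $\{A x : A \in V\}$ for generic $x$ --- equals $4$. If $\dim V \le 5$, then row-reducing a basis of $V$ into the first five components yields Corollary~\ref{rk4}{\upshape(1)}, so assume $\dim V \in \{6,7\}$. Choosing a maximal subspace $V_0 \subseteq V$ of generic rank $3$ and invoking the rank-$3$ classification of Section~3, one should be able to bring $V_0$ (by a congruence $V \mapsto T\tp V T$) into standard form --- the space of all quadratic forms in $x_1,x_2,x_3$ when $\dim V_0$ is maximal --- so that $V = V_0 \oplus W$ with $\dim W \in \{1,2\}$. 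Analyzing how the form(s) in $W$ raise the generic rank from $3$ to $4$ should then force that either $V$ is supported on at most five variables (Corollary~\ref{rk4}{\upshape(4)}), or $\dim W = 1$ and its form, taken as $\tilde H_1$, realizes Corollary~\ref{rk4}{\upshape(2)}. In characteristic $2$ the gradient of a quadratic form records only its alternating part and the squares $x_k^2$ contribute nothing to $\jac\tilde H$, so the same scheme runs with ``all forms in $x_1,x_2,x_3$'' replaced by ``the off-diagonal forms in $x_1,x_2,x_3,x_4$, plus arbitrary square terms'', producing the variants Corollary~\ref{rk4}{\upshape(3)} and the extra degenerate possibility Corollary~\ref{rk4}{\upshape(5)}.

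The main obstacle is exactly this classification step --- a bounded-rank / compression-space argument --- and within it the verification that the list {\upshape(1)}--{\upshape(5)} is exhaustive: one has to rule out spaces $V$ of dimension $6$ or $7$ and generic rank $4$ that neither compress into five rows nor into five columns nor have the stated special shape, and in characteristic $2$ isolate correctly the configuration underlying Corollary~\ref{rk4}{\upshape(5)}. Everything that precedes it is routine bookkeeping with Theorem~\ref{rkr}.
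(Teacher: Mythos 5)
Your reduction steps are fine and agree with the paper: cases (2) and (3) of theorem \ref{rkr} give case (4) of the corollary, and case (1) of theorem \ref{rkr} with $r \le 3$ gives case (1) because $\frac12 r^2 - \frac12 r + 1 \le r+1$. But the entire substance of the corollary is the case $r = 4$ under (1) of theorem \ref{rkr}, and there your text is a plan, not a proof. You yourself flag the classification of spaces $V$ of dimension $6$ or $7$ and generic rank $4$ as ``the main obstacle'' and leave exhaustiveness of the list (1)--(5) unverified; the phrases ``one should be able to bring $V_0$ into standard form'' and ``should then force'' are exactly the steps that need an argument. Moreover, your plan rests on an unjustified structural assumption: you take a maximal subspace $V_0 \subseteq V$ of generic rank $3$ and write $V = V_0 \oplus W$ with $\dim W \in \{1,2\}$, i.e.\ you assume $V$ always contains a generic-rank-$3$ subspace of codimension at most $2$. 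Nothing in your sketch establishes this, and it is precisely the kind of claim the hard examples (e.g.\ $\jac \tilde{H} = \hess(x_1x_2x_3 + x_4x_5x_6)$ in characteristic $2$, which realizes case (5)) are designed to test. There is also a circularity risk in ``invoking the rank-$3$ classification of Section 3'': in the paper, the proof of theorem \ref{rk3} itself refers back to a construction in the proof of corollary \ref{rk4}, so you would have to check that only theorem \ref{rkr} is really needed for the piece you invoke.

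For comparison, the paper does not classify spaces of quadratic forms at all. It works with the block decomposition of lemma \ref{Irlem}, writing $\jac \tilde{H}$ as in \eqref{ABCD} with $D = 0$ and $CB = 0$, and splits into four cases: the column space of $B$ contains a nonzero constant vector (then zeroing the first row drops the rank to $3$ and theorem \ref{rkr} applied to that map yields cases (1), (2), (3) of the corollary); $\rk B \le 1$ (handled via the rank-one theorem for matrices of linear forms, giving case (4) or reducing to the first case); $\rk C \le 1$ (giving case (1)); and the remaining case, where $\rk B = \rk C = 2$ is forced, lemmas \ref{Ccoldep1} and \ref{BCrkr} force $\chr K = 2$ with independent rows of $B$ and columns of $C$, and the rank-$\le 2$ results from the cited earlier work pin down case (5). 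That case analysis, or something equivalent to it, is what is missing from your proposal.
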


\begin{lemma} \label{23}
If $\tilde{H}$ is as in {\upshape(2)} or {\upshape(3)} of 
theorem {\upshape\ref{rkr}}, then theorem {\upshape\ref{rkr}} 
holds for $H$. 
\end{lemma}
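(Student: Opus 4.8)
The plan is, starting from some $\tilde H = SH(Tx)$ that satisfies the column condition (2) or (3) of Theorem \ref{rkr}, to compose on the left with one further $S' \in \GL_m(K)$ so as to also secure the row bound, without spoiling the column structure, and then to note that the converse for such $\tilde H$ is immediate. Write $r = \rk\jac H$ and $d = \tfrac12 r^2 + \tfrac12 r$, and recall that $\rk\jac\tilde H = \rk\jac H = r$, since $S$, $T$ and the substitution $x \mapsto Tx$ leave the rank of a matrix over $K(x)$ unchanged.

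The key step is to bound by $d$ the $K$-dimension of the span $V$ of the rows $\jac\tilde H_1, \ldots, \jac\tilde H_m$ of $\jac\tilde H$ inside the finite-dimensional $K$-space of vectors of linear forms. In case (2) each $\tilde H_i$ involves only $x_1, \ldots, x_r$, so $\jac\tilde H_i$ is the gradient of a quadratic form in $x_1, \ldots, x_r$; as $\chr K \ne 2$ these gradients correspond bijectively to symmetric $r \times r$ matrices, which form a space of dimension $\binom{r+1}{2} = d$. In case (3) the vanishing of the columns of $\jac\tilde H$ beyond the $(r+1)$-st means no $\tilde H_i$ contains a monomial $x_j x_k$ with $k \ne j$ and $\max(j,k) \ge r+2$, while $\chr K = 2$ discards the monomials $x_j^2$; hence $\jac\tilde H_i$ is the gradient of the part of $\tilde H_i$ in $x_1, \ldots, x_{r+1}$, and in characteristic $2$ such a gradient depends only on the alternating $(r+1) \times (r+1)$ matrix of off-diagonal coefficients of that part, and those again form a space of dimension $\binom{r+1}{2} = d$.

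Given $\dim_K V \le d$, I would choose $S' \in \GL_m(K)$ for which the last $m - d$ rows of $S'(\jac\tilde H)$ vanish: select from $\jac\tilde H_1, \ldots, \jac\tilde H_m$ a basis of $V$, bring it to the front by a permutation matrix, and clear the remaining rows by elementary row operations. Set $\tilde H' := S'\tilde H = (S'S)H(Tx)$. Since left multiplication by $S'$ acts on rows only, $\jac\tilde H' = S'(\jac\tilde H)$ has the same zero columns as $\jac\tilde H$ and, in addition, only its first $d$ rows possibly nonzero; so $\tilde H'$ together with $S'S$ and $T$ establishes the existential part of Theorem \ref{rkr} for $H$. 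For the converse: in case (2), $\jac\tilde H'$ has at most $r$ nonzero columns, hence $\rk\jac\tilde H' \le r$ outright; in case (3), Euler's relation gives $(\jac\tilde H')\,x = 2\tilde H' = 0$ because $\chr K = 2$, and as the columns of $\jac\tilde H'$ after the $(r+1)$-st are zero this is a nontrivial $K(x)$-linear dependence among its first $r+1$ columns, so $\rk\jac\tilde H' \le r$ once more.

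I expect the main obstacle to be the characteristic $2$ case of the dimension bound: there the forms permitted by the column condition, essentially the degree $2$ part of $K[x_1, \ldots, x_{r+1}, x_{r+2}^2, \ldots, x_n^2]$, span more than $d$ dimensions, so one must factor the assignment $\tilde H_i \mapsto \jac\tilde H_i$ through its image, identify that image with the alternating $(r+1) \times (r+1)$ matrices, and see that quotienting out its kernel (the diagonal forms $\sum_j c_j x_j^2$, on which $\jac$ vanishes in characteristic $2$) leaves exactly $\binom{r+1}{2} = d$ dimensions; the preservation of the column structure under left multiplication by $S'$ is then automatic.
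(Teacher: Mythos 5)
Your proposal is correct and follows essentially the same route as the paper: bound the $K$-dimension of the row space of $\jac \tilde{H}$ by $\binom{r+1}{2} = \tfrac12 r^2 + \tfrac12 r$ (the paper does this by observing the rows are $K$-dependent on the rows of $\jac(x_1^2,x_1x_2,\ldots,x_r^2)$ in case (2), resp.\ $\jac(x_1x_2,\ldots,x_rx_{r+1})$ in case (3), which is your symmetric/alternating-matrix count), then clear all but the first $\tfrac12 r^2+\tfrac12 r$ rows with a further invertible row operation that cannot disturb the column structure. Your converse argument is also the paper's: at most $r$ nonzero columns in case (2), and $\jac\tilde{H}\cdot x = 2\tilde{H} = 0$ in characteristic $2$ in case (3).
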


\begin{proof}
The number of terms of degree $d$ in $x_1, x_2, \ldots, x_r$ is
$$
\binom{r-1+d}{d}
$$
and the number of square-free terms of degree $d$ in $x_1, x_2, \ldots, x_r, x_{r+1}$
is
$$
\binom{r+1}{d}
$$
which is both $\frac12 r^2 + \frac12 r$ if $d = 2$.

If $\tilde{H}$ is as in (2) of theorem \ref{rkr}, then $\chr K \ne 2$
and the rows of $\jac \tilde{H}$ are dependent over $K$ on the $\frac12 r^2 + \frac12 r$ rows of
$$
\jac (x_1^2,x_1 x_2, \ldots, x_r^2)
$$
If $\tilde{H}$ is as in (3) of theorem \ref{rkr}, then $\chr K = 2$
and the rows of $\jac \tilde{H}$ are dependent over $K$ on the $\frac12 r^2 + \frac12 r$ rows of
$$
\jac (x_1 x_2, x_1 x_3, \ldots, x_r x_{r+1})
$$
So at most $\frac12 r^2 + \frac12 r$ rows of $\jac \tilde{H}$ as in theorem \ref{rkr}
need to be nonzero.

If $\tilde{H}$ is as in (2) of theorem \ref{rkr}, then 
$\rk \jac \tilde{H} \le r$ is direct. If $\tilde{H}$ is as in
(3) of theorem \ref{rkr}, then $\rk \jac \tilde{H} \le r$ follows as well,
because $\jac \tilde{H} \cdot x = 0$ if $\chr K = 2$.
\end{proof}

\begin{lemma} \label{Irlem}
Let $M$ be a nonzero matrix whose entries are linear forms in $K[x]$. Suppose
that $r := \rk M$ does not exceed the cardinality of $K$. 

Then there are invertible matrices $S$ and $T$ over $K$, such that for $\tilde{M} := S M T$, 
$$
\tilde{M} = \tilde{M}^{(1)} L_1 + \tilde{M}^{(2)} L_2 + \cdots + \tilde{M}^{(n)} L_n
$$
where $\tilde{M}^{(i)}$ is a matrix with coefficients in $K$ for each $i$,
$L_1, L_2, \ldots, L_n$ are independent linear forms, and
$$
\tilde{M}^{(1)} = \left( \begin{array}{cc}
I_r & \zeromat \\ \zeromat & \zeromat \\                          
\end{array} \right)
$$
\end{lemma}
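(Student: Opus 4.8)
The plan is to first write $M$ in the form $M = M^{(1)}L_1 + M^{(2)}L_2 + \cdots + M^{(n)}L_n$ by simply collecting the coefficient of each variable $x_i$: since the entries of $M$ are linear forms, each entry is $\sum_i c_{jk,i} x_i$ for scalars $c_{jk,i} \in K$, and setting $L_i = x_i$ gives such a decomposition with $M^{(i)} = (c_{jk,i})_{j,k}$. The first task is then to arrange, after a linear change of the $x_i$ (equivalently, replacing the $L_i$ by independent linear combinations), that $M^{(1)}$ has the maximal possible rank among all matrices in the span $\langle M^{(1)}, \ldots, M^{(n)} \rangle$ over $K$, and that this maximal rank equals $r = \rk M$. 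The inequality $\rk M^{(i)} \le r$ for every $i$ (indeed for every matrix in the $K$-span) holds because specializing the $x_i$ to values in $K$ can only decrease rank, but over an infinite field — or, as the hypothesis allows, a field of cardinality at least $r$ — one can choose a specialization achieving rank exactly $r$; here the bound $r \le |K|$ is exactly what is needed to find, via a Vandermonde/interpolation argument, a point where the $r \times r$ minor that is nonzero as a polynomial does not vanish. Conversely, after replacing $L_1$ by such an optimal linear combination $\sum a_i L_i$ and extending $\{L_1\}$ to a basis $L_1, \ldots, L_n$ of the linear forms, the new $M^{(1)}$ has rank exactly $r$.

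Once $M^{(1)}$ has rank exactly $r$, the second step is routine: pick $S$ and $T$ in $\GL(K)$ so that $S M^{(1)} T$ is the block matrix $\left(\begin{smallmatrix} I_r & 0 \\ 0 & 0 \end{smallmatrix}\right)$ — this is just the standard rank normal form, achievable over any field. Applying the same $S$ and $T$ to all of $M$ at once, i.e. passing to $\tilde M := S M T = \sum_i (S M^{(i)} T) L_i$, preserves the decomposition: set $\tilde M^{(i)} := S M^{(i)} T$, so that $\tilde M^{(1)} = \left(\begin{smallmatrix} I_r & 0 \\ 0 & 0 \end{smallmatrix}\right)$ as required, while $L_1, \ldots, L_n$ remain the same independent linear forms.

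The step I expect to be the main obstacle is the existence of a specialization of the variables at which $\rk M$ is attained by some scalar matrix in the span — concretely, finding $a_1, \ldots, a_n \in K$ such that $\rk\big(\sum_i a_i M^{(i)}\big) = r$, using only $|K| \ge r$ rather than $K$ infinite. The idea is: let $\Delta(x_1, \ldots, x_n)$ be the sum of squares (if $\chr K \ne 2$) or an appropriate combination, of all $r \times r$ minors of $M$ — more robustly, fix one $r \times r$ minor $\delta(x)$ of $M$ that is a nonzero polynomial (it exists since $\rk M = r$). It has degree $r$ in the $x_i$. One then needs a point $a \in K^n$ with $\delta(a) \ne 0$; the Schwartz–Zippel / Combinatorial Nullstellensatz bound guarantees this as soon as $|K| > \deg_{x_i} \delta$ along some direction, and since $\delta$ is $r$-linear in the "layers" one can get away with $|K| \ge r$ by choosing the point coordinate-by-coordinate and invoking that a nonzero univariate polynomial of degree $\le r-1$ in each successive variable (after the homogeneous structure is used to reduce the degree by one) has a non-root in any set of size $\ge r$. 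Handling this cardinality bound carefully — and making sure the reduction in degree coming from homogeneity of the minor is used — is the delicate part; everything else is bookkeeping with the linear-algebra normal form.
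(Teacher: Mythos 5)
Your overall strategy coincides with the paper's: use $\#K \ge r$ to find a point $a \in K^n$ at which a nonzero $r \times r$ minor of $M$ (a nonzero homogeneous polynomial of degree $r$) does not vanish, arrange the decomposition so that the coefficient matrix of $L_1$ is the specialization $M|_{x=a}$, note that this specialization has rank exactly $r$ (at most $r$ because specialization cannot raise rank, at least $r$ because the chosen minor survives), and finish by putting that single constant matrix into rank normal form via $S$ and $T$ applied simultaneously to all coefficient matrices. Your treatment of the cardinality point is also in the right spirit: the paper just cites an external lemma for the existence of a non-vanishing point of a nonzero homogeneous polynomial of degree $r$ when $\#K \ge r$, and your sketch (induct on the variables; homogeneity either makes the top coefficient constant, which is handled directly, or forces the univariate polynomial in the last variable to have degree at most $r-1$) is a correct way to get the bound $\#K \ge r$ rather than $\#K \ge r+1$.

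However, the middle step fails as written, because of a duality slip. You claim that after ``replacing $L_1$ by $\sum_i a_i L_i$ and extending $\{L_1\}$ to a basis'' the new $M^{(1)}$ has rank $r$, implicitly identifying it with $\sum_i a_i M^{(i)} = M|_{x=a}$. But the coefficient matrices transform contragrediently to the linear forms: if $x_i = \sum_j c_{ij} L_j$, then the coefficient of $L_1$ in the rewritten decomposition is $\sum_i c_{i1} M^{(i)} = M|_{x=v}$, where $v$ is determined by $L_1(v) = 1$ and $L_j(v) = 0$ for $j \ge 2$, not by the formula $L_1 = \sum_i a_i x_i$. Concretely, for the $1 \times 1$ matrix $M = (x_2)$ and $a = (1,1)$, taking $L_1 = x_1 + x_2$ and completing with $L_2 = x_2$ gives $M = 0 \cdot L_1 + 1 \cdot L_2$, so the new $M^{(1)} = (0)$ has rank $0 < 1 = \rk M$, even though $M|_{x=a} \ne 0$; so with an arbitrary completion of the basis your claim is simply false. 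The repair is exactly what the paper does: choose independent linear forms with $L_j(a) = 0$ for all $j \ge 2$ and $L_1(a) = 1$; evaluating $M = \sum_j M^{(j)} L_j$ at $x = a$ then shows that the coefficient of $L_1$ is $M|_{x=a}$, of rank $r$. With that correction (conditions imposed on $L_2,\ldots,L_n$, not on $L_1$), the rest of your argument, including the final normalization $S M^{(1)} T = \left(\begin{smallmatrix} I_r & 0 \\ 0 & 0 \end{smallmatrix}\right)$ and $\tilde{M}^{(i)} := S M^{(i)} T$, goes through unchanged.
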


\begin{proof}
Since $\rk M = r \ge 1$, $M$ has a minor matrix of size $r \times r$ whose determinant 
is nonzero. Assume without loss of generality that the determinant of the leading 
principal minor matrix of size $r \times r$ of $M$ is nonzero. Then this determinant
$f$ is a homogeneous polynomial of degree $r$. From \cite[Lemma 5.1 (ii)]{1310.7843},
it follows that there exists a $v \in K^n$ such that $f(v) \ne 0$. 

Take independent linear forms $L_1, L_2, \ldots, L_n$ such that $L_i(v) = 0$
for all $i \ge 2$. Then $L_1(v) \ne 0$, and we may assume that $L_1(v) = 1$.
We can write
$$
M = M^{(1)} L_1 + M^{(2)} L_2 + \cdots + M^{(n)} L_n
$$
where $M^{(i)}$ is a matrix with coefficients in $K$ for each $i$.
If we substitute $x = v$ on both sides, we see that $\rk M^{(1)} \le r$ and that 
the leading principal minor matrix of size $r \times r$ of $M^{(1)}$ has a
nonzero determinant. 

So $\rk M^{(1)} = r$, and we can choose invertible matrices $S$ and $T$ over $K$,
such that
$$
S M^{(1)} T = \left( \begin{array}{cc}
I_r & \zeromat \\ \zeromat & \zeromat \\                          
\end{array} \right)
$$
So we can take $\tilde{M}^{(i)} = S M^{(i)} T$ for each $i$.
\end{proof}

Suppose that $\tilde{M}$ is as in lemma \ref{Irlem}. Write
\begin{equation} \label{ABCD}
\tilde{M} = \left( \begin{array}{cc} A & B \\ C & D \end{array} \right)
\end{equation}
where $A \in \Mat_r(K)$. If we extend $A$ with one row and one
column of $\jac H$, we get an element of $\Mat_{r+1}\big(K(x)\big)$ from which the
determinant is zero. If we focus on the coefficients of $L_1^r$ 
and $L_1^{r-1}$ of this determinant, we see that
\begin{equation} \label{D0CB0}
D = 0 \qquad \mbox{and} \qquad C \cdot B = 0
\end{equation}
respectively. In particular, 
\begin{equation} \label{rkCrkBler}
D = 0 \qquad \mbox{and} \qquad C \cdot B = 0
\end{equation}

\begin{lemma} \label{Ccoldep1}
Let $\tilde{H} \in K[x]^m$, such that $\jac \tilde{H}$ is as $\tilde{M}$ in 
lemma \ref{Irlem}. 

Suppose that either $\chr K \ne 2$ or the rows of $B$ are dependent over $K$. 
Then the following hold.
\begin{enumerate}[\upshape (i)]

\item The columns of $C$ in \eqref{ABCD} are dependent over $K$.

\item If $C \ne 0$, then there exists a $v \in K^n$ of which the first $r$
coordinates are not all zero, such that
$$
(\jac \tilde{H}) \cdot v = \left( \begin{array}{cc} I_r & \zeromat \\ 
\zeromat & \zeromat \end{array} \right) \cdot x
$$

\end{enumerate}
\end{lemma}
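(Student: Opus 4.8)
The plan is to extract the needed dependence relations from the fact that $\jac\tilde H$ has rank at most $r$, using the already-established structure in \eqref{ABCD} and \eqref{D0CB0}, namely $D=0$ and $CB=0$. Write $\jac\tilde H$ in the block form $\left(\begin{smallmatrix} A & B \\ C & 0\end{smallmatrix}\right)$ with $A\in\Mat_r(K(x))$ whose coefficient of $L_1$ is $I_r$. First I would argue that $C\ne 0$ forces $B\ne 0$ when $\chr K\ne 2$: the rows of $\jac\tilde H$ are gradients of the quadratic forms $\tilde H_i$, so the symmetry of Hessians ties the blocks together. Concretely, $\parder{}{x_j}(\jac\tilde H)_{ik} = \parder{}{x_k}(\jac\tilde H)_{ij}$, and applying this across the block boundary (row index $\le r$, one column $\le r$ and one column $>r$, or row index $>r$) links the entries of $B$ to those of $C$. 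In characteristic $\ne 2$ this symmetry is strong enough that $C=0$ whenever $B=0$; contrapositively, if $C\ne 0$ then $B\ne 0$, and then we are in the hypothesis branch "rows of $B$ dependent over $K$" only after we also know those rows span something small — so more care is needed.

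**Proving (i).** The cleaner route to (i): since $\tilde H_i$ is a quadratic form, $\jac\tilde H = \mathcal H\cdot x$ where $\mathcal H$ is the (constant) coefficient matrix of second partials, except in characteristic $2$ where the diagonal terms misbehave; in any characteristic, each row of $\jac\tilde H$ is a linear form, so $\jac\tilde H = \sum_{i=1}^n \tilde M^{(i)} x_i$ with $\tilde M^{(i)}\in\Mat_m(K)$, and $\tilde M^{(i)}_{jk}$ records $\parder{}{x_i}\parder{}{x_k}\tilde H_j$. Hessian symmetry gives $\tilde M^{(i)}_{jk}=\tilde M^{(k)}_{ji}$ for $j\le r$ (and for all $j$ when $\chr K\ne 2$, with the usual caveat on $i=k$). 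Now look at the bottom block $C$: its $(p,q)$ entry (with $p$ a row index $>r$, $q\le r$) is $\sum_i \tilde M^{(i)}_{pq} x_i$, and $\tilde M^{(i)}_{pq}=\tilde M^{(q)}_{pi}$, which for $q\le r$, $i\le r$ lives inside the $A$-block's Hessian data, while for $i>r$ it records interaction of $\tilde H_p$ with $x_q, x_i$. Combining this with $CB=0$ and $D=0$: the key observation is that the $r$ columns of $C$ are the gradients (restricted to the last $m-r$ rows) of the forms $\parder{}{x_q}\tilde H_p$ summed appropriately, and $CB=0$ together with $A$ having full rank $r$ (its $L_1$-coefficient is $I_r$) forces a linear dependence. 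I would make this precise by evaluating at a well-chosen point: pick $v\in K^n$ with $L_1(v)=1$, $L_i(v)=0$ for $i\ge 2$ as in Lemma~\ref{Irlem}; then at $x=v$ we get $\jac\tilde H|_{x=v}=\tilde M^{(1)}=\left(\begin{smallmatrix}I_r&0\\0&0\end{smallmatrix}\right)$, so $C|_{x=v}=0$. Thus every column of $C$ is a linear form vanishing at $v$, i.e.\ lies in $K\text{-span}\{L_2,\dots,L_n\}$, but that alone is not a $K$-linear dependence of the columns as vectors of forms — so instead I combine $CB=0$ with the structure of $B$.

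**The main obstacle and (ii).** The heart of the matter — and the step I expect to be hardest — is turning $C\cdot B=0$ plus the rank-$r$ condition into an honest $K$-linear dependence of the \emph{columns} of $C$, rather than a dependence over $K(x)$. Here the hypothesis enters: if $\chr K\ne 2$ then Hessian symmetry means the columns of $C$ are (up to the fixed block decomposition) the same data as the rows of $B$ transposed, so "rows of $B$ dependent over $K$" and "columns of $C$ dependent over $K$" become equivalent statements about one matrix of $K$-coefficients; and if we are not assuming $\chr K\ne 2$ we are assuming the rows of $B$ are $K$-dependent outright. In either case, one shows the $K$-span of the columns of $\tilde M^{(i)}$ restricted to the relevant rows has dimension $<r$, which is exactly a $K$-linear dependence among the columns of $C$. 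For (ii): given such a dependence, let $v\in K^n$ have first $r$ coordinates $(v_1,\dots,v_r)$ realizing the dependence $C\cdot(v_1,\dots,v_r)\tp=0$ and last $n-r$ coordinates chosen so that $A\cdot(v_1,\dots,v_r)\tp + B\cdot(v_{r+1},\dots,v_n)\tp$ equals the first $r$ entries of $x$ — possible because $A$, having $I_r$ as $L_1$-coefficient, is generically invertible, and because $D=0$ kills the bottom rows; the remaining freedom in $v_{r+1},\dots,v_n$ (which only feed into $B$, and $CB=0$ already ensures consistency with $C v=0$ on the bottom block) lets us hit $\left(\begin{smallmatrix}I_r&0\\0&0\end{smallmatrix}\right)x$ exactly. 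I would verify the bottom $m-r$ coordinates of $(\jac\tilde H)v$ vanish: they equal $C\cdot(v_1,\dots,v_r)\tp + D\cdot(\cdots)=0+0=0$, using both relations in \eqref{D0CB0}. The first $r$ coordinates require checking that $A v' + B v''$ — as a vector of linear forms — equals $(x_1,\dots,x_r)\tp$; writing $A=\sum A^{(i)}x_i$ with $A^{(1)}=I_r$ and expanding, this pins down the needed values of $v$, and the hypothesis on $B$ (hence on $C$) is exactly what guarantees the system is solvable with $(v_1,\dots,v_r)\ne 0$. The delicate bookkeeping is making sure all of $\tilde H_i\in K[x]$ being \emph{quadratic homogeneous} is used — it is what makes $\jac\tilde H$ have linear entries and thus makes the evaluation-at-$v$ and coefficient-of-$L_1$ arguments legitimate.
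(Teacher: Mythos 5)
There is a genuine gap, and it sits exactly where you predicted the difficulty would be. Your argument for (i) rests on the claim that ``Hessian symmetry means the columns of $C$ are the same data as the rows of $B$ transposed,'' so that dependence of the rows of $B$ over $K$ and dependence of the columns of $C$ over $K$ become the same statement. This is false: $\jac \tilde{H}$ is the Jacobian of an $m$-tuple of quadratic forms, not a Hessian, and its symmetry is only \emph{within} each row (the row $\jac \tilde{H}_i$ is a gradient, so $\hess \tilde{H}_i$ is symmetric). The block $B$ lives in rows $1,\ldots,r$, i.e.\ comes from $\tilde{H}_1,\ldots,\tilde{H}_r$, while $C$ lives in rows $r+1,\ldots,m$, i.e.\ comes from the entirely different components $\tilde{H}_{r+1},\ldots,\tilde{H}_m$; no symmetry ties them together, and indeed the lemma does not claim the two dependence statements are equivalent (its hypothesis is a disjunction). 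Similarly, your construction of $v$ in (ii) cannot work as stated: $v$ must be a \emph{constant} vector, so ``$A$ is generically invertible'' only lets you solve $A v' + B v'' = (x_1,\ldots,x_r)\tp$ over $K(x)$, not over $K$; for an arbitrary $v'$ with $C v' = 0$ there is in general no constant $v''$ making that identity of linear forms hold. You also run the implication in the wrong direction: (i) is what should be extracted from (ii), not used to build it.

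The missing idea is the polarization identity for quadratic homogeneous maps: for any constant $v$, $(\jac \tilde{H}) \cdot v = (\jac \tilde{H})|_{x=v} \cdot x$. Applying it to the specific $v$ of lemma \ref{Irlem} (the point with $L_1(v)=1$, $L_i(v)=0$ for $i \ge 2$, which you did consider but only used to get $C|_{x=v}=0$) gives at once $(\jac \tilde{H}) \cdot v = \tilde{M}^{(1)} \cdot x$, which is exactly the display in (ii); no equation needs to be solved. What remains is to show $v' \ne 0$, and that is where the hypothesis enters, in two separate ways: if some nonzero $w \in K^r$ has $w\tp B = 0$, then $w\tp A v' = w\tp (A|B) v = \sum_i w_i x_i \ne 0$; if instead $\chr K \ne 2$, then $CB=0$ and Euler's identity give $C A v' = C \cdot (x_1,\ldots,x_r)\tp = 2(\tilde{H}_{r+1},\ldots,\tilde{H}_m)\tp$, which is nonzero once $C \ne 0$. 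Finally (i) follows from (ii) by reading off the bottom block: $C v' = (C|D) v = 0$ since $D = 0$, so the nonzero constant vector $v'$ exhibits the dependence of the columns of $C$.
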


\begin{proof}
The claims are direct if $C = 0$, so assume that $C \ne 0$. Then there exists
an $i > r$, such that $\tilde{H}_i \ne 0$.
\begin{enumerate}[\upshape (i)]

\item Take $v$ as in (ii). From $D = 0$, we deduce that $C \cdot v' = (C | D) \cdot v = 0$ 
for some nonzero $v' \in K^r$, which yields (i).

\item Take $v$ as in lemma \ref{Irlem}, and write $v = (v',v'')$,
such that $v' \in K^r$ and $v'' \in K^{n-r}$. Since $\tilde{H}$ is quadratic
homogeneous, we have
$$
(\jac \tilde{H}) \cdot v = (\jac \tilde{H})|_{x=v} \cdot x = \tilde{M}^{(1)} \cdot x
= \left( \begin{array}{cc} I_r & \zeromat \\ \zeromat & \zeromat \end{array} \right) \cdot x
$$
So it remains to show that $v' \ne 0$. We distinguish two cases:
\begin{itemize}

\item \emph{the rows of $B$ are dependent over $K$.} 

Take $w \in K^r$ nonzero, such that $w\tp B = 0$. Then  
$$
w\tp A\,v' = w\tp A\,v' + w\tp B\,v'' = w\tp (A|B)\,v 
= w\tp \left( \begin{smallmatrix} x_1 \\ x_2 \\[-5pt] \vdots \\ x_r \end{smallmatrix} \right)
= \sum_{i=1}^r w_i x_i \ne 0
$$
so $v' \ne 0$.

\item \emph{$\chr K \ne 2$.} 

From $CB = 0$, we deduce that
$$
CA\,v' = CA\,v' + CB\,v'' = C\,(A|B)\,v
= C \left( \begin{smallmatrix} x_1 \\ x_2 \\[-5pt] \vdots \\ x_r \end{smallmatrix} \right)
= 2 \left( \begin{smallmatrix} \tilde{H}_{r+1} \\ \tilde{H}_{r+2} \\[-5pt] \vdots \\ 
    \tilde{H}_{m} \end{smallmatrix} \right)
$$
As $\tilde{H}_{i} \ne 0$ for some $i > r$, the right-hand side is nonzero, so $v' \ne 0$. \qedhere

\end{itemize}
\end{enumerate}
\end{proof}

\begin{lemma} \label{BCrkr}
Let $\tilde{H} \in K[x]^m$, such that $\jac \tilde{H}$ is as $\tilde{M}$ in 
lemma \ref{Irlem}. 

Suppose that $\rk B + \rk C = r$ and that the columns of $C$ are dependent
over $K$. Then the column space of $B$ contains a nonzero constant vector.
\end{lemma}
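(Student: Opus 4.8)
The plan is to argue purely by a dimension count over $K(x)$, using only the identity $C \cdot B = 0$ from \eqref{D0CB0} together with the two hypotheses, and not touching the finer structure of $\jac \tilde H$ at all.

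First I would view $C$ as a $K(x)$-linear map $K(x)^r \to K(x)^{m-r}$. The relation $C \cdot B = 0$ says exactly that every column of $B$ lies in $\ker C$, so the column space of $B$ is a $K(x)$-subspace of $\ker C$. Its dimension is $\rk B$, whereas $\dim_{K(x)} \ker C = r - \rk C$. By the hypothesis $\rk B + \rk C = r$ these two dimensions coincide, so the inclusion of equal-dimensional $K(x)$-subspaces is forced to be an equality: the column space of $B$ equals $\ker C$.

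It then remains to exhibit a nonzero constant vector inside $\ker C$, and this is precisely what the second hypothesis delivers: the columns of $C$ being dependent over $K$ means there is a nonzero $w \in K^r$ with $C w = 0$, i.e. $w \in \ker C$. Combining this with the previous step, $w$ is a nonzero constant vector lying in the column space of $B$, which is what we wanted.

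I do not expect a real obstacle; the only points needing care are bookkeeping of base fields — $\rk B$, $\rk C$ and the column space of $B$ are all taken over $K(x)$, while ``dependent over $K$'' and the final ``constant vector'' refer to $K$ — and making sure the ``inclusion of subspaces of equal dimension is an equality'' step is applied to $K(x)$-vector spaces. One should also note in passing that the degenerate case $B = 0$ cannot arise under the hypotheses, since it would give $\rk C = r$ and hence $K(x)$-independent (a fortiori $K$-independent) columns of $C$, contradicting the assumed $K$-dependence.
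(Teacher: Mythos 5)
Your proposal is correct and follows essentially the same route as the paper: the paper also deduces from $\rk B + \rk C = r$ together with $C\cdot B = 0$ (equation \eqref{D0CB0}) that $\ker C$ equals the column space of $B$, and then observes that a nonzero $v' \in K^r$ with $Cv' = 0$, provided by the $K$-dependence of the columns of $C$, lies in that column space. Your write-up merely makes the dimension count over $K(x)$ explicit, which the paper leaves implicit.
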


\begin{proof}
From $\rk C + \rk B = r$ and $CB = 0$, we deduce that $\ker C$ is equal 
to the column space of $B$. Hence any $v' \in K^r$ 
such that $C v' = 0$ is contained in the column space of $B$.
\end{proof}

\begin{proof}[Proof of theorem \ref{rkr}]
From lemma \ref{Fqlem} below, it follows that we may assume that $K$ has at least
$r$ elements. Let $M = \jac H$ and take $S$ and $T$ as in lemma \ref{Irlem}.
Then $S (\jac H) T$ is as $\tilde{M}$ in lemma \ref{Irlem}. Let $\tilde{H} := SH(Tx)$.
Then $\jac \tilde{H} = S (\jac H) |_{x=Tx} T$ is as $\tilde{M}$ in lemma \ref{Irlem}
as well, but for different linear forms $L_i$. 

Take $\tilde{M} = \jac \tilde{H}$ and take $A$, $B$, $C$, $D$ as in \eqref{ABCD}.
We distinguish four cases:
\begin{itemize}

\item \emph{The column space of $B$ contains a nonzero constant vector.}

Then there exists an $U \in \GL_m(K)$, such that the column space of
$U \tilde{M}$ contains $e_1$, because $D = 0$. Consequently, the matrix which 
consists of the last $m-1$ rows of $\jac (U \tilde{H}) = U \tilde{M}$ has rank $r-1$.
By induction on $r$, it follows that we can choose $U$ such that
only 
$$
\tfrac12(r-1)^2 + \tfrac12(r-1) = (\tfrac12 r^2 - r + \tfrac12) + 
(\tfrac12 r - \tfrac12) = \tfrac12 r^2 - \tfrac12 r
$$
rows of $\jac (U \tilde{H})$ are nonzero besides the first row of 
$\jac (U \tilde{H})$. So $U \tilde{H}$ is as $\tilde{H}$ in (1) of 
theorem \ref{rkr}. 

\item \emph{The rows of $B$ are dependent over $K$ in pairs.}

If $B \ne 0$, then the column space of $B$ contains a nonzero 
constant vector, and the case above applies because $D = 0$.

So assume that $B = 0$. Then only the first $r$ columns of 
$\jac \tilde{H}$ may be nonzero, because $D = 0$. Since
$\rk \jac \tilde{H} = r$, the first $r$ columns of $\jac \tilde{H}$ 
are indeed nonzero.
Furthermore, it follows from $\jac \tilde{H} \cdot x = 2 \tilde{H}$
that $\chr K \neq 2$. So $\tilde{H}$ is as in (2) of theorem \ref{rkr},
and the result follows from lemma \ref{23}.

\item \emph{$\chr K = 2$ and $\rk B \le 1$.}

If the rows of $B$ are dependent over $K$ in pairs, then the second case
above applies, so assume that the rows of $B$ are not dependent over $K$ 
in pairs. 

On account of \cite[Theorem 2.1]{1601.00579}, 
the columns of $B$ are dependent over $K$ in pairs. As $D = 0$, there exists an 
$U'' \in \GL_{n-r}(K)$ such that only the first column
of
$$
\binom{B}{D} U''
$$
may be nonzero. Hence there exists an $U \in \GL_n(K)$ 
such that only the first $r+1$ columns of $(\jac \tilde{H})\, U$ 
may be nonzero. Consequently, $\tilde{H}(Ux)$ is as 
$\tilde{H}$ in (3) of theorem \ref{rkr}, and the result follows from lemma \ref{23}.

\item \emph{None of the above.}

We first show that $\rk C \le r - 2$. So assume that $\rk C \ge r - 1$.
From $\rk C + \rk B \le r$, it follows that $\rk B \le 1$. As the last case
above does not apply, $\chr K \ne 2$. From (i) of lemma \ref{Ccoldep1}, it follows 
that the columns of $C$ are dependent over $K$. As the first case above does not
apply, it follows from lemma \ref{BCrkr} that $\rk C + \rk B < r$.
So $\rk B = 0$ and the rows of $B$ are dependent over $K$ in pairs,
which is the second case above, and a contradiction.
So $\rk C \le r - 2$ indeed.

By induction on $r$, it follows that $C$ needs to have at most 
$$
\tfrac12 (r-2)^2 + \tfrac12 (r-2) = (\tfrac12 r^2 - 2r + 2) + (\tfrac12 r - 1)
= \tfrac12 r^2 - \tfrac32 r + 1
$$
nonzero rows. As $A$ has $r$ rows, there exists an $U \in \GL_m(K)$ such that
$U\tilde{H}$ is as $\tilde{H}$ in (1) of theorem \ref{rkr}.

\end{itemize}
The last claim of theorem \ref{rkr} follows from lemma \ref{23}
and the fact that $\frac12 r^2 - \frac12 r + 1 = r$ if $1 \le r \le 2$.
\end{proof}

\begin{lemma} \label{Fqlem}
Let $L$ be an extension field of $K$. If theorem {\upshape\ref{rkr}} 
holds for $L$ instead of $K$, then theorem {\upshape\ref{rkr}} holds.
\end{lemma}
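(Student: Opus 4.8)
The plan is to fix a quadratic homogeneous $H \in K[x]^m$, set $r := \rk \jac H$, regard $H$ as an element of $L[x]^m$, and transport the $L$-normal form provided by the hypothesis back down to $K$. First note that $r$ is unchanged: $\rk \jac H$ is the largest size of a minor of $\jac H$ that is a nonzero element of $K[x]$, and a polynomial in $K[x]$ vanishes in $K[x]$ iff it vanishes in $L[x]$. Thus Theorem \ref{rkr} over $L$, applied to $H$, yields $S_0 \in \GL_m(L)$ and $T_0 \in \GL_n(L)$ such that $\tilde H_0 := S_0 H(T_0 x)$ has $\jac \tilde H_0$ with at most $\frac12 r^2 + \frac12 r$ nonzero rows, and is of one of the forms (1), (2), (3), for the same $r$ as over $K$.

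The crux is that, for this fixed $H$, ``$H$ can be transformed into normal form $(i)$'' is equivalent to a bound on the dimension of a $K$-linear invariant of $H$ that is stable under enlarging the base field, hence descends. Write $\jac H = \sum_{k=1}^n A_k x_k$ with $A_k \in \Mat_{m,n}(K)$. For cases (2) and (3) the invariant is $\mathcal K := \bigcap_{k=1}^n \ker A_k$: if a column $v$ of a matrix $T$ lies in $\mathcal K$ then the corresponding column of $\jac\big(H(Tx)\big) = (\jac H)|_{x=Tx}\,T$ is identically zero, and conversely the $L$-normal form forces the last $n - r$ columns (case (2)) or $n - r - 1$ columns (case (3)) of $T_0$ to lie in $\mathcal K$, so $\dim \mathcal K \ge n-r$, resp.\ $\ge n-r-1$. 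Since $\mathcal K$ is the kernel of a fixed matrix over $K$, its dimension over $K$ equals its dimension over $L$; hence I can pick that many $K$-independent vectors of $\mathcal K$ in $K^n$, complete them to a basis of $K^n$ with those vectors last, take $T \in \GL_n(K)$ to be the resulting matrix and $S := I_m$, and obtain $H(Tx)$ in form (2), resp.\ (3), over $K$; Lemma \ref{23} then delivers Theorem \ref{rkr} for $H$, including the row bound. For case (1) the invariant involves the subspace $\mathcal P$ of quadratic homogeneous $g \in K[x]$ with $\jac g = 0$, which is $\{0\}$ when $\chr K \ne 2$ and $\langle x_1^2, \ldots, x_n^2\rangle_K$ when $\chr K = 2$; precomposition with any $T \in \GL_n$ maps $\mathcal P$ onto itself, since $\jac\big(g(Tx)\big) = (\jac g)|_{x=Tx}\,T$ and $T$ and the substitution $x \mapsto Tx$ are invertible. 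Row $i$ of $\jac \tilde H$ is zero exactly when $\tilde H_i \in \mathcal P$, so for any $T$ and any field $F$ the minimum number of nonzero rows of $\jac\big(S H(Tx)\big)$ over $S \in \GL_m(F)$ equals the dimension of the image of $\langle H_1, \ldots, H_m\rangle_F$ modulo $\mathcal P$, which is unchanged on passing from $K$ to $L$. Thus the bound $\le \frac12 r^2 - \frac12 r + 1$ witnessed over $L$ holds over $K$ already, and a single suitable $S \in \GL_m(K)$ (with $T = I_n$) puts $H$ in form (1). Finally, the ``conversely'' part of Theorem \ref{rkr} needs no descent: it asserts $\rk \jac \tilde H \le r$ for all $\tilde H$ of the listed shapes, and since ranks are unaffected by field extension this is literally the same statement over $L$ and over $K$.

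The step I expect to be the main annoyance is the characteristic-$2$ bookkeeping: there a zero row of $\jac H$ means $H_i \in \mathcal P$, not $H_i = 0$, so one must argue consistently modulo $\mathcal P$ and check that $\mathcal P$, the kernels $\bigcap_k \ker A_k$, and the spans $\langle H_1, \ldots, H_m\rangle$ are defined over $K$ (for $\mathcal P$, even over the prime field), so that every dimension count is insensitive to $K \hookrightarrow L$.
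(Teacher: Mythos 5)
Your proposal is correct, but it takes a genuinely different route from the paper. The paper proves the lemma by induction on $m$, only for the row-bound part of Theorem \ref{rkr}: if $m > \frac12 r^2 + \frac12 r$, the normal form over $L$ forces the rows of $\jac H$ to be linearly dependent over $L$, hence over $K$ (since $L$ is a $K$-vector space), so one row can be killed by an element of $\GL_m(K)$ and the induction applies; the base case $m \le \frac12 r^2 + \frac12 r$ is vacuous, the alternatives (1)--(3) are dismissed as ``treated in a similar manner'', and the converse is noted to be field-independent. You instead descend each normal form in a single step through base-change-stable invariants: for (2) and (3) the constant kernel $\bigcap_k \ker A_k$ of $\jac H = \sum_k A_k x_k$, whose zero columns of $T_0$ over $L$ certify $\dim \ge n-r$ (resp.\ $n-r-1$), a dimension computed from a matrix over $K$ and hence attained by vectors in $K^n$; and for (1) the dimension of $\langle H_1,\dots,H_m\rangle$ modulo the space $\mathcal{P}$ of quadratic forms with zero Jacobian ($0$ in characteristic $\ne 2$, the span of the $x_i^2$ in characteristic $2$), which is independent of $T$ and of the base field and equals the minimal number of nonzero rows achievable by left composition. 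Both arguments rest on the same underlying principle---$K$-defined linear-algebra data keeps its dimension under $K \hookrightarrow L$---but yours is non-inductive and makes explicit precisely the column cases (2), (3) that the paper leaves to the reader (via Lemma \ref{23}, as you do), at the cost of the extra bookkeeping with $\mathcal{P}$ in characteristic $2$, which you handle correctly; the paper's induction is shorter because for the row bound alone the base case is trivial. Your observation that the converse claim needs no descent matches the paper.
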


\begin{proof}
We only prove lemma \ref{Fqlem} for the first claim of theorem \ref{rkr},
because the second claim can be treated in a similar manner, and the last
claim does not depend on the actual base field.

Suppose $H$ satisfies the first claim of theorem \ref{rkr}, 
but with $L$ instead of $K$. If $m \le \frac12 r^2 + \frac12 r$, then 
only the first $\frac12 r^2 + \frac12 r$ rows of $\jac H$ may be nonzero,
and $H$ satisfies the first claim of theorem \ref{rkr}.

So assume that $m > \frac12 r^2 + \frac12 r$. Then the rows of $\jac H$ are 
dependent over $L$. Since $L$ is a vector space over $K$, 
the rows of $\jac H$ are dependent over $K$. So we
may assume that the last row of $\jac H$ is zero. By induction on $m$,
$(H_1,H_2,\ldots,H_{m-1})$ satisfies the first claim for $H$ in
in theorem \ref{rkr}. As $H_m = 0$,
we conclude that $H$ satisfies the first claim of theorem \ref{rkr}.
\end{proof}

\begin{proof}[Proof of corollary \ref{rk4}]
If (2) or (3) of theorem \ref{rkr} applies, then (4) of corollary \ref{rk4} 
follows. So assume that (1) of theorem \ref{rkr} applies. Then only the 
first $\tfrac12 r^2 - \tfrac12 r + 1$ rows of $\jac \tilde{H}$ may be nonzero.

Suppose first that $r \le 3$. Then 
$$
\tfrac12 r^2 - \tfrac12 r + 1 \le \tfrac32 r - \tfrac12 r + 1 = r + 1
$$
and corollary \ref{rk4} follows.

Suppose next that $r = 4$. 
Take $\tilde{M} = \jac \tilde{H}$ and take $A$, $B$, $C$, $D$ as in \eqref{ABCD}.
We distinguish three cases.
\begin{itemize}

\item \emph{The column space of $B$ contains a nonzero constant vector.}

Then there exists an $U \in \GL_m(K)$, such that the column space of
$U \tilde{M}$ contains $e_1$. So the matrix which consists of the last 
$m-1$ rows of $\jac (U \tilde{H}) = U \tilde{M}$ has rank $r-1$.

Make $\tilde{U}$ from $U$ by replacing its first row by the zero row.
Then $\rk \jac (\tilde{U} \tilde{H}) = r - 1$, and we can apply theorem 
\ref{rkr} to $\tilde{U} \tilde{H}$. 
\begin{compactitem}

\item If case (1) of theorem \ref{rkr} applies for $\tilde{U} \tilde{H}$,
then case (1) of corollary \ref{rk4} follows, because
$$
\tfrac12 (r-1)^2 - \tfrac12 (r-1) + 1 = \tfrac92 - \tfrac32 + 1 = 4 = r
$$

\item If case (2) of theorem \ref{rkr} applies for $\tilde{U} \tilde{H}$, 
then case (2) of corollary \ref{rk4} follows.

\item If case (3) of theorem \ref{rkr} applies for $\tilde{U} \tilde{H}$, 
then case (3) of corollary \ref{rk4} follows.

\end{compactitem}

\item \emph{$\rk B \le 1$.}

If the columns of $B$ are dependent over $K$ in pairs, then (4) of corollary 
\ref{rk4} is satisfied. So assume that the columns of $B$ are not dependent over 
$K$ in pairs. Then $B \ne 0$, and from \cite[Theorem 2.1]{1601.00579}, 
it follows that the rows of $B$ are dependent over $K$ in pairs. 
Hence the first case above applies.

\item \emph{$\rk C \le 1$.}

Then it follows from theorem \ref{rkr} that at most one row of $C$ needs to
be nonzero. So (1) of corollary \ref{rk4} is satisfied.

\item \emph{None of the above.}

We first show that $\rk C = \rk B = 2$ and that the columns
of $C$ are independent over $K$.
Since $\rk C + \rk B \le r = 4$, we deduce from $\rk B > 1$ and $\rk C > 1$
that $\rk C = \rk B = 2$. So $\rk C + \rk B = 4 = r$. As the first case above 
does not apply, it follows from lemma \ref{BCrkr} that the columns
of $C$ are independent over $K$.

From (i) of lemma \ref{Ccoldep1}, we deduce that
$\chr K = 2$ and that the rows of $B$ are independent over $K$.
Since the $\rk C + 2$ columns of $C$ are independent over $K$, it follows from
theorem \ref{rkr} that $C$ needs to have at most 
$$
\tfrac12 \cdot 2^2 - \tfrac12 \cdot 2 + 1 = 2 - 1 + 1 = 2
$$
nonzero rows. 
Since the $\rk B + 2$ rows of $B$ are independent over $K$, it follows from
\cite[Theorem 2.3]{1601.00579} that $B$ needs to have at most 
$2$ nonzero columns, because the first case above does not apply. 
So (5) of corollary \ref{rk4} is satisfied.
\qedhere

\end{itemize}
\end{proof}

The last case in corollary \ref{rk4} is indeed necessary, 
e.g.\@ $\jac \tilde{H} = \hess (x_1 x_2 x_3 + x_4 x_5 x_6)$, or
$\jac \tilde{H} = \hess (x_1 x_2 x_3 + x_1 x_5 x_6 + x_4 x_2 x_6 + x_4 x_5 x_3)$.

\section{rank 3}

\begin{theorem} \label{rk3}
Let $H \in K[x]^m$ be a quadratic homogeneous polynomial map, such that 
$r := \rk \jac H = 3$. Then we can choose $S \in \GL_m(K)$ and $T \in \GL_n(K)$, 
such that for $\tilde{H} := S H(Tx)$, one of the following statements holds:
\begin{enumerate}[\upshape(1)]

\item Only the first $3$ rows of $\jac \tilde{H}$ may be nonzero;

\item Only the first $4$ rows of $\jac \tilde{H}$ may be nonzero, and
$$
(\tilde{H}_1,\tilde{H}_2,\tilde{H}_3,\tilde{H}_4) = 
(\tilde{H}_1, \tfrac12 x_1^2, x_1 x_2,  \tfrac12 x_2^2)
$$
(in particular, $\chr K \ne 2$);

\item Only the first $4$ rows of $\jac \tilde{H}$ may be nonzero,
$$
\jac (\tilde{H}_1,\tilde{H}_2,\tilde{H}_3,\tilde{H}_4) = 
\jac (\tilde{H}_1, x_1 x_2, x_1 x_3, x_2 x_3)
$$
and $\chr K = 2$;

\item $\tilde{H}$ is as in {\upshape(2)} or {\upshape(3)} of theorem 
{\upshape\ref{rkr}};

\item Only the first $4$ rows of $\jac \tilde{H}$ may be nonzero, and
$$
\big(\tilde{H}_1,\tilde{H}_2,\tilde{H}_3,\tilde{H}_4\big) = 
\big( x_1 x_3 + c x_2 x_4, x_2 x_3 - x_1 x_4, 
\tfrac12 x_3^2 + \tfrac{c}2 x_4^2, \tfrac12 x_1^2 + \tfrac{c}2 x_2^2 \big)
$$
for some nonzero $c \in K$ (in particular, $\chr K \ne 2$).

\end{enumerate}
Conversely, $\rk \jac \tilde{H} \le 3$ in each of the five statements above.
\end{theorem}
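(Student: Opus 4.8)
The plan is to feed Theorem~\ref{rkr} with $r=3$ into a short case split and then to read the possible normal forms off the essentially unique syzygy among the components of $\tilde H$. If $\tilde H$ is as in (2) or (3) of Theorem~\ref{rkr} we are at once in case (4). Otherwise $\tilde H$ is as in (1) of Theorem~\ref{rkr}, so only the first $\tfrac12\cdot3^2-\tfrac12\cdot3+1=4$ rows of $\jac\tilde H$ may be nonzero; if these four rows are linearly dependent over $K$, a row operation absorbed into $S$ makes the fourth zero, which is case (1). So it remains to classify the case in which $\tilde H_1,\dots,\tilde H_4$ have $K$-linearly independent gradients while $\rk\jac(\tilde H_1,\dots,\tilde H_4)=3$. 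Then the left kernel over $K(x)$ of the $4\times n$ matrix $\jac(\tilde H_1,\dots,\tilde H_4)$ is one-dimensional; taking a generator with coprime polynomial entries and passing to its homogeneous part of least degree gives forms $g_1,\dots,g_4$, homogeneous of a common degree $d\ge1$ ($d=0$ would contradict the $K$-independence of the gradients), not all zero, with $\sum_i g_i\nabla\tilde H_i=0$. Writing $\nabla\tilde H_i=(\hess\tilde H_i)\,x$, this says the symmetric matrix $\sum_i g_i\hess\tilde H_i$ annihilates $x$.

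Assume first $\chr K\neq2$. Dotting the syzygy with $x$ and using Euler's identity gives $\sum_i g_i\tilde H_i=0$; differentiating and re-using the syzygy shows that each partial derivative of the tuple $(g_1,\dots,g_4)$ is again a relation among $\tilde H_1,\dots,\tilde H_4$, now with coefficients of degree $d-1$, so $d=1$ is impossible. The crux is to push this to $d=2$, i.e.\ that the minimal algebraic relation among $\tilde H_1,\dots,\tilde H_4$ is a conic $P$ with $(g_i)$ proportional to $\big(\partial_{y_1}P(\tilde H),\dots,\partial_{y_4}P(\tilde H)\big)$; I would attack this through the $K[x]$-module of syzygies together with the one-dimensionality of the $K(x)$-kernel. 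Granting it, $P$ is irreducible and isotropic over $K$ (a reducible conic would give a $\bar K$-linear relation among the $\tilde H_i$, and an anisotropic one would force $\tilde H$ to vanish), hence of rank $3$ or $4$; after a basis change of $\{\tilde H_1,\dots,\tilde H_4\}$ and a scaling, $P$ is $\tilde H_2^2-\tilde H_1\tilde H_3$ or $\tilde H_1^2+c\tilde H_2^2-4\tilde H_3\tilde H_4$ with $c\neq0$. A divisibility argument then forces the quadratic forms occurring in $P$ to be products of $K$-linear forms, and choosing coordinates to match turns $\tilde H$ into case (2) in the rank-$3$ case and into case (5) in the rank-$4$ case.

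Now assume $\chr K=2$. The skeleton is the same, but $\nabla$ kills the squares $x_j^2$, so the $\tilde H_i$ must be read modulo $\operatorname{span}_K(x_1^2,\dots,x_n^2)$ and the structural input is the ``pairs'' theorems of \cite{1601.00579}, used exactly as in the proofs of Theorem~\ref{rkr} and Corollary~\ref{rk4}. Running the corresponding four-way split with $r=3$, every outcome is either already case (1), or covered by (2)--(3) of Theorem~\ref{rkr} (hence case (4)), or else reduces, after the allowed transformations, to $\jac(\tilde H_2,\tilde H_3,\tilde H_4)=\jac(x_1x_2,x_1x_3,x_2x_3)$ with $\tilde H_1$ arbitrary, which is case (3). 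The converse is routine: case (1) gives rank at most $3$ at once, in cases (2) and (3) the last three rows of $\jac\tilde H$ already have rank $2$ over $K(x)$, in case (5) one writes down the dependence coming from $\tilde H_1^2+c\tilde H_2^2=4\tilde H_3\tilde H_4$, and case (4) is the last claim of Theorem~\ref{rkr}.

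I expect the main obstacle to be exactly the degree bound $d=2$ in characteristic $\neq2$ — showing that the first algebraic relation among $\tilde H_1,\dots,\tilde H_4$ is a conic and may be taken ``internal'' after a basis change — together with its characteristic-$2$ counterpart, where one has to bound instead the pairwise dependence of the off-diagonal blocks $B$ and $C$ of $\jac\tilde H$. Once the relevant relation is identified, bringing $\tilde H$ to one of the five listed normal forms is linear algebra over $K$.
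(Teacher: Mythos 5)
Your overall skeleton (invoke Theorem~\ref{rkr}, absorb $K$-dependent rows into $S$ to get case (1), and then classify the situation of four $K$-independent rows of rank $3$) is sound, and your idea of working with the essentially unique syzygy $\sum_i g_i \nabla\tilde H_i = 0$ is a genuinely different route from the paper, which instead runs a block-matrix analysis of $\jac\tilde H = \left(\begin{smallmatrix} A & B \\ C & D\end{smallmatrix}\right)$ in the normal form of Lemma~\ref{Irlem}, using $D=0$, $CB=0$, Lemmas~\ref{Ccoldep1}, \ref{BCrkr}, \ref{Ccoldep2}, the Hessian diagonalization of Corollary~\ref{symdiag}, and the explicit computations of Lemmas~\ref{rk3trafo} and \ref{rk3calc}, plus the cited results of \cite{1601.00579} in characteristic~$2$. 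However, as written your argument has genuine gaps, and they sit exactly where the difficulty of the theorem lies. First, the pivotal claim that the minimal syzygy has degree $d=2$ and that $(g_1,\dots,g_4)$ is the gradient of a quadratic relation $P$ evaluated at $\tilde H$ is not proved; you say you would ``attack this through the $K[x]$-module of syzygies'' and then proceed ``granting it''. Your degree argument rules out $d=0$ and $d=1$, but nothing in the proposal bounds $d$ from above, and this is the heart of the matter.

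Second, even granting the conic $P$, identifying $\tilde H$ up to $(S,T)$ from the single relation $P(\tilde H)=0$ together with $\rk\jac\tilde H=3$ is not routine linear algebra: this is precisely what the paper's Lemma~\ref{Ccoldep1}(ii), Lemma~\ref{rk3trafo} and the coefficient-by-coefficient determinant computation of Lemma~\ref{rk3calc} accomplish, and your proposed shortcut is not correct as stated — the ``divisibility argument forcing the quadratic forms occurring in $P$ to be products of $K$-linear forms'' fails for the actual normal form (5), where $\tilde H_3 = \tfrac12 x_3^2 + \tfrac{c}2 x_4^2$ need not factor over $K$ (that is exactly why the parameter $c$ survives). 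Third, the characteristic-$2$ branch is only an assertion: ``running the corresponding four-way split, every outcome is either case (1), case (4), or reduces to case (3)'' restates the conclusion without an argument, whereas the paper has to do real work here (Proposition~\ref{evenrk}, Lemma~\ref{Ccoldep2}, and Theorems 2.1/2.3 of \cite{1601.00579}) — note in particular that in characteristic $2$ your Euler-identity step $\sum_i g_i\tilde H_i=0$ is unavailable, so the syzygy machinery would need a substitute you do not supply. So the proposal is an interesting alternative strategy, but it is not yet a proof: the degree bound on the syzygy, the passage from the relation to the explicit normal forms (2) and (5), and the entire characteristic-$2$ case all remain open.
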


\begin{corollary} \label{rktrdeg}
Let $H \in K[x]^m$ be a quadratic homogeneous polynomial map, such that
$\rk \jac H \le 3$. If $\chr K \neq 2$, then $\rk \jac H = \trdeg_K K(H)$. 
\end{corollary}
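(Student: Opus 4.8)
The plan is to combine the known inequality $\rk \jac H \le \trdeg_K K(H)$ (valid in full generality, cited above) with the classification in Theorem~\ref{rk3}, checking the reverse inequality $\trdeg_K K(H) \le \rk \jac H$ normal form by normal form. I would begin by recording that both quantities are unchanged when $H$ is replaced by $\tilde{H} = S H(Tx)$ with $S \in \GL_m(K)$ and $T \in \GL_n(K)$: for the rank this is clear from $\jac \tilde{H} = S (\jac H)|_{x=Tx} T$, since $x \mapsto Tx$ is a $K$-algebra automorphism of $K(x)$ and multiplication by invertible matrices over $K$ preserves rank; for the transcendence degree it is because $K(\tilde{H}) = K(H(Tx))$ is the image of $K(H)$ under that same automorphism. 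Hence it suffices to prove $\trdeg_K K(\tilde{H}) \le r := \rk \jac \tilde{H}$ for one convenient $\tilde{H}$ in each class, and then the cited inequality closes the argument.

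I would next use the elementary fact that in characteristic $\ne 2$ a quadratic homogeneous polynomial is recovered from its gradient by Euler's identity ($2f = \sum_j x_j \partial_j f$); in particular a component of $\tilde{H}$ whose row in $\jac \tilde{H}$ is zero must itself vanish. This immediately settles $r = 0$ (then $\tilde{H} = 0$), and, via Theorem~\ref{rkr}, the cases $r \le 2$: since $\chr K \ne 2$, form (3) of that theorem does not occur; in form (2) all components of $\tilde{H}$ lie in $K[x_1,\ldots,x_r]$, so $\trdeg_K K(\tilde{H}) \le r$; and in form (1) only the first $\tfrac12 r^2 - \tfrac12 r + 1 \le r$ rows of $\jac \tilde{H}$ may be nonzero, so $\tilde{H}$ has at most $r$ nonzero components and again $\trdeg_K K(\tilde{H}) \le r$.

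For $r = 3$ I would run through the five normal forms of Theorem~\ref{rk3}, discarding form (3) because $\chr K \ne 2$. In form (1), $\tilde{H}$ has at most three nonzero components, so $\trdeg_K K(\tilde{H}) \le 3$. In form (4) we are in case (2) of Theorem~\ref{rkr} with $r = 3$, so every component of $\tilde{H}$ lies in $K[x_1,x_2,x_3]$ and $\trdeg_K K(\tilde{H}) \le 3$. In form (2) the nonzero components are $\tilde{H}_1, \tfrac12 x_1^2, x_1 x_2, \tfrac12 x_2^2$, which satisfy the relation $\tilde{H}_3^{\,2} = 4\,\tilde{H}_2 \tilde{H}_4$; in form (5) the nonzero components satisfy $\tilde{H}_1^{\,2} + c\,\tilde{H}_2^{\,2} = (x_1^2 + c x_2^2)(x_3^2 + c x_4^2) = 4\,\tilde{H}_3 \tilde{H}_4$. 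Each relation exhibits an algebraic dependence over $K$ among the four components, hence $\trdeg_K K(\tilde{H}) \le 3$.

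In every case we obtain $\trdeg_K K(H) = \trdeg_K K(\tilde{H}) \le r = \rk \jac \tilde{H} = \rk \jac H$, and together with $\rk \jac H \le \trdeg_K K(H)$ this gives the asserted equality. The only genuine computation is the polynomial identity in form (5) of Theorem~\ref{rk3} (the one in form (2) is trivial); everything else is bookkeeping against the classification. I do not expect a real obstacle here — the hypothesis $\chr K \ne 2$ does exactly the work needed, making Euler's identity available and eliminating the inseparable forms (3) of Theorems~\ref{rkr} and~\ref{rk3}.
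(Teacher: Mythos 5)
Your proof is correct and follows essentially the same route as the paper: reduce to the normal forms of Theorems \ref{rkr} and \ref{rk3}, note $\trdeg_K K(\tilde H)\le r$ directly in all cases except (5), and handle (5) via the relation $\tilde H_1^2+c\tilde H_2^2-4\tilde H_3\tilde H_4=0$, which is exactly the identity the paper uses. Your treatment is in fact somewhat more explicit than the paper's (invariance under $S,T$, Euler's identity, and the ranks $r\le 2$ are spelled out rather than left implicit), but the underlying argument is the same.
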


\begin{proof}
Since $\rk \jac H \le \trdeg_K K(H)$, it suffices to show that 
$\trdeg_K K(H) \le 3$ if $\chr K \neq 2$.
In (5) of theorem \ref{rk3}, we have $\trdeg_K K(H) \le 3$ because
$$
\tilde{H}_1^2 + c \tilde{H}_2^2 - 4 \tilde{H}_3 \tilde{H}_4 = 0
$$
In the other cases of theorem \ref{rk3} where $\chr K \neq 2$,
$\trdeg_K K(H) \le 3$ follows directly.
\end{proof}

\begin{lemma} \label{Ccoldep2}
Let $\tilde{H} \in K[x]^m$, such that $\jac \tilde{H}$ is as $\tilde{M}$ in 
lemma \ref{Irlem}. 

If $\rk C = 1$ in \eqref{ABCD} and $r$ is odd, then the columns of $C$ in 
\eqref{ABCD} are dependent over $K$.
\end{lemma}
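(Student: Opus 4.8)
The plan is to split on $\chr K$. If $\chr K \ne 2$, the hypothesis of lemma \ref{Ccoldep1} holds, and part (i) of that lemma already gives that the columns of $C$ are dependent over $K$; so nothing remains in that case, and the hypothesis that $r$ is odd plays no role there. From now on I assume $\chr K = 2$, and also that $C \ne 0$, the conclusion being trivial otherwise.

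First I would record the shape of the rows of $C$. Since $A$ is the leading principal $r \times r$ block of $\jac \tilde{H}$, the rows of $C$ are the restrictions to the first $r$ coordinates of $\jac \tilde{H}_{r+1}, \ldots, \jac \tilde{H}_m$, and since $D = 0$ we have $\parder{\tilde{H}_i}{x_k} = 0$ whenever $i > r$ and $k > r$; hence $\tilde{H}_i \in K[x_1, \ldots, x_r]$ for each $i > r$. For a quadratic homogeneous $f$, Euler's identity applied to each of the degree-$1$ forms $\parder{f}{x_k}$ yields $\jac f = x\tp \hess f$. Writing $V_i$ for the leading principal $r \times r$ submatrix of $\hess \tilde{H}_i$ (the remaining entries of $\hess \tilde{H}_i$ being zero), it follows that the $(i-r)$-th row of $C$ is $(x_1, \ldots, x_r)\, V_i$ for every $i > r$.

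Next comes the parity argument. Each $\hess \tilde{H}_i$ is symmetric, and in characteristic $2$ its diagonal entries $\parder[2]{\tilde{H}_i}{x_k}$ vanish, so $V_i$ is an alternating matrix and hence $\rk V_i$ is even. Since $r$ is odd, $\rk V_i \le r - 1$, so $V_i$ is singular and there is a $\mu \in K^r \setminus \{0\}$ with $V_i \mu = 0$. Now choose $i > r$ with the $(i-r)$-th row $c^{*}$ of $C$ nonzero (possible since $C \ne 0$) and take such a $\mu$ for this $i$; then $c^{*}\mu = (x_1, \ldots, x_r)\, V_i \mu = 0$. As $\rk C = 1$, every row of $C$ is a $K(x)$-multiple of $c^{*}$, hence is annihilated on the right by $\mu$ as well, so $C\mu = 0$ with $\mu \ne 0$; that is, the columns of $C$ are dependent over $K$.

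I do not expect a serious obstacle. The two points that need care are: the identity $\jac f = x\tp \hess f$ for quadratic homogeneous $f$ (valid in every characteristic, once one applies Euler to the degree-$1$ partials, and the reason the Hessian can be brought in at all); and the classical fact that an alternating matrix over an arbitrary field has even rank, which is the one place where the oddness of $r$ is used.
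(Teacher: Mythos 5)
Your proposal is correct and follows essentially the same route as the paper: the case $\chr K \ne 2$ via lemma \ref{Ccoldep1}(i), and in characteristic $2$ the observation that the relevant $r \times r$ Hessian block is alternating (symmetric with zero diagonal), hence of even rank $< r$ since $r$ is odd, so a kernel vector $\mu \in K^r$ annihilates the chosen nonzero row of $C$ because $\jac f = x\tp \hess f$, and then all of $C$ because $\rk C = 1$. The only (harmless) deviations are that you justify the last step directly over $K(x)$ where the paper instead invokes theorem \ref{rkr} to get the rows of $C$ pairwise dependent over $K$, and that in characteristic $2$ the components $\tilde{H}_i$ with $i > r$ may, despite $D = 0$, still contain squares $x_k^2$ with $k > r$, which does not affect your argument since such terms contribute nothing to the Hessian.
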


\begin{proof}
The case where $\chr K \ne 2$ follows from (i) of lemma \ref{Ccoldep1}, so
assume that $\chr K = 2$. 
Since $\rk C = 1 = \frac12 \cdot 1^2 + \frac12 \cdot 1$, we deduce from 
theorem \ref{rkr} that the rows of $C$ are dependent over $K$ in pairs.
Say that the first row of $C$ is nonzero. 

As $r$ is odd, it follows from proposition \ref{evenrk} below that
$\rk \hess \tilde{H}_{r+1} < r$. Hence there exists a $v' \in K^r$ 
such that $(\hess \tilde{H}_{r+1}) \, v' = 0$, and
$$
(\jac \tilde{H}_{r+1}) \, v' = x\tp (\hess \tilde{H}_{r+1}) \, = 0
$$
The row space of $C$ is spanned by $\jac \tilde{H}_{r+1}$,
so $C\,v' = 0$.
\end{proof}

\begin{proposition} \label{evenrk}
Let $M \in \Mat_{n,n}(K)$ be either a symmetric matrix, or an anti-symmetric 
matrix with zeroes on the diagonal.

Then there exists a lower triangular matrix $T \in \Mat_{n,n}(K)$ with 
ones on the diagonal, such that $T\tp M T$ is the product of a symmetric 
permutation matrix and a diagonal matrix.

In particular, $\rk M$ is even if $M$ is an anti-symmetric 
matrix with zeroes on the diagonal.
\end{proposition}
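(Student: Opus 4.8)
The plan is to establish the normal form by induction on $n$, peeling off one block of size $1$ or $2$ at each step using only \emph{elementary lower-triangular congruences}, by which I mean transformations $M \mapsto T\tp M T$ with $T = I + c E_{ik}$ and $i > k$; that is, adding $c$ times an earlier row to a later row and simultaneously $c$ times the corresponding column to the corresponding column. Each such $T$ is lower triangular with ones on the diagonal, and a product of matrices of this shape is again of this shape, so composing the transformations from all induction steps produces the desired $T$. Note that $T\tp M T$ remains symmetric (respectively anti-symmetric with zero diagonal) whenever $M$ is.

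For the induction step I would look at the first row and column of $M$. If the first column --- hence, by the symmetry type, the first row --- is zero, then $M = (0) \oplus M'$ with $M'$ of the same type, and I recurse on $M'$, peeling off the $1 \times 1$ block $(0) = (1) \cdot (0)$. If $m_{11} \neq 0$, which can only happen in the symmetric case, I clear the rest of the first column and row by adding suitable multiples of row/column $1$ to the later rows/columns, reducing $M$ to $(m_{11}) \oplus M'$, and recurse. The remaining case is $m_{11} = 0$ with the first column nonzero; here I pick the least $j \geq 2$ with $m_{j1} \neq 0$, first use row/column $j$ to kill $m_{i1}$ for every $i > j$ so that the first column becomes $m_{j1} e_j$, and then use row/column $1$ --- which has just become a scalar multiple of $e_j$ --- to kill $m_{ij}$ and $m_{ji}$ for every $i \notin \{1,j\}$; because of the symmetry type, killing $m_{ji}$ clears $m_{ij}$ at the same time. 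After these operations, rows and columns $1$ and $j$ are supported on the $2 \times 2$ submatrix on $\{1,j\}$, which has the form $\left(\begin{smallmatrix} 0 & a \\ \varepsilon a & b \end{smallmatrix}\right)$ with $a \neq 0$, $\varepsilon = \pm 1$, and $b = 0$ except possibly when $M$ is symmetric. If $b = 0$, this $2 \times 2$ block is already the product of the transposition matrix $\left(\begin{smallmatrix} 0 & 1 \\ 1 & 0 \end{smallmatrix}\right)$ and a diagonal matrix; if $b \neq 0$, one further elementary congruence adding a multiple of row/column $1$ to row/column $j$ diagonalizes it as $(\alpha) \oplus (b)$ with $\alpha \neq 0$. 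In either case the indices $1$ and $j$ are now used up, and I recurse on the complementary $(n-2) \times (n-2)$ submatrix, which still has the required symmetry type.

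Reassembling the blocks, $T\tp M T$ has its nonzero entries exactly at the positions $(i, \tau(i))$ for an involution $\tau$ of $\{1,\dots,n\}$ --- a product of disjoint transpositions, one for each size-$2$ block that stayed off-diagonal, with all other indices fixed --- so $T\tp M T = P D$ with $P$ the (symmetric) permutation matrix of $\tau$ and $D$ diagonal. This gives the first assertion. For the last assertion, observe that when $M$ is anti-symmetric with zero diagonal the case $m_{11} \neq 0$ never occurs and $b$ is always $0$, so the blocks are $1 \times 1$ zero blocks and $2 \times 2$ blocks $\left(\begin{smallmatrix} 0 & a \\ -a & 0 \end{smallmatrix}\right)$ with $a \neq 0$; hence $\rk M = \rk T\tp M T$ is twice the number of $2 \times 2$ blocks, which is even.

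The only real work is the positional bookkeeping in the last case: one must check that the clearing operations within each batch are mutually independent, so that the order is irrelevant and they do not recreate nonzero entries, that they leave the pivot entry $m_{j1}$ and the entries already set to zero untouched, and that the $(n-2) \times (n-2)$ submatrix on which one recurses genuinely inherits the symmetry type --- all while tracking the sign $\varepsilon$ in the anti-symmetric case. None of this is analytically difficult; it is this bookkeeping that is the main obstacle.
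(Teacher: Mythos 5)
Your overall strategy---elementary unipotent congruences peeling off blocks of size $1$ or $2$---is the same as the paper's, but as written the argument does not prove the stated proposition, because your transformations go in the wrong direction. A congruence $M \mapsto T\tp M T$ with $T = I + cE_{ik}$, $i > k$ (lower triangular, as the statement requires) adds $c$ times the \emph{later} row $i$ to the \emph{earlier} row $k$, and likewise for columns; your own gloss (``adding $c$ times an earlier row to a later row'') already contradicts this formula, and every operation your algorithm actually performs---pivoting at the top-left and clearing entries in \emph{later} rows and columns using row/column $1$ or $j$---is of that opposite kind, i.e.\ a congruence $T\tp M T$ with $T = I + cE_{ki}$, $k < i$, which is \emph{upper} unitriangular. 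Hence the accumulated $T$ you build is upper, not lower, unitriangular. The lower-triangularity is not cosmetic: lemma \ref{lem1} needs exactly this shape of $T$ so that the conjugation preserves the property that $\jac H$ is nonzero only in the first row and the first two or three columns, and that preservation need not survive an upper unitriangular substitution in $x_2,\ldots,x_n$. This is precisely why the paper anchors its recursion at the \emph{last} row and column, pivoting in the bottom-right corner and clearing upward and leftward. Your proof is repaired simply by mirroring it: run the recursion from position $(n,n)$ as the paper does, or apply your version to the index-reversed matrix $JMJ$ ($J$ the reversal permutation) and conjugate back.

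A second, smaller error sits in the $2\times 2$ repair. For the block $\left(\begin{smallmatrix} 0 & a \\ a & b\end{smallmatrix}\right)$ on indices $\{1,j\}$ with $b \neq 0$, adding $c$ times row/column $1$ to row/column $j$ changes only $b$ (to $b + 2ca$) and leaves $a$ and the zero in position $(1,1)$ untouched, so it can never produce $(\alpha)\oplus(b)$; in characteristic $2$ it does nothing at all. The congruence that does diagonalize the block pivots on $b$, i.e.\ adds $-a/b$ times row/column $j$ to row/column $1$---again the ``later to earlier'', lower-triangular direction, one more sign that the whole scheme should be run in the paper's orientation. Note also that in characteristic $2$ this sub-case is genuinely obstructed rather than merely mis-handled: for instance $\left(\begin{smallmatrix} 1 & 1 \\ 1 & 0 \end{smallmatrix}\right)$ over $\mathbb{F}_2$ is symmetric but is fixed by every lower unitriangular congruence, so a nonzero diagonal entry sitting next to a zero pivot cannot always be normalized without $\frac12$; this does not hurt the paper's applications, since in characteristic $2$ the matrices that occur there are Hessians of quadratic forms and have zero diagonal, so $b = 0$ automatically. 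Apart from these two points, your bookkeeping of the clearing batches and the parity conclusion for the alternating case are fine.
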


\begin{proof}
We describe an algorithm to transform $M$ to the product of a symmetric 
permutation matrix and a diagonal matrix. We distinguish three cases.
\begin{itemize}

\item \emph{The last column of $M$ is zero.} 

Advance with the principal minor of $M$ that we get by removing row and 
column $n$.

\item \emph{The entry in the lower right corner of $M$ is nonzero.}

Use $M_{nn}$ as a pivot to clean the rest of the last column and the last row
of $M$. Advance with the principal minor of $M$ that we get by removing row 
and column $n$.

\item \emph{None of the above.}

Let $i$ be the index of the lowest nonzero entry in the last column of $M$.
Use $M_{in}$ and $M_{ni}$ as pivots to clean the rest of columns $i$ and $n$
of $M$ and rows $i$ and $n$ of $M$. Advance with the principal minor of $M$ 
that we get by removing rows and columns $i$ and $n$.

\end{itemize}
If $M$ is $M$ is an anti-symmetric matrix with zeroes on the diagonal, then so is 
$T\tp M T$. By combining this with that $T\tp M T$ is the product of a symmetric 
permutation matrix and a diagonal matrix, we infer the last claim.
\end{proof}

Notice that over characteristic $2$, any Hessian
matrix of a polynomial is \mbox{(anti-)}\allowbreak symmetric with zeroes on the diagonal, 
which has even rank by extension of scalars. Furthermore, one can show
that any nondegenerate quadratic form in odd dimension $r$ over a perfect field
of characteristic $2$ is equivalent to 
$$
x_1 x_2 + x_3 x_4 + \cdots + x_{r-2} x_{r-1} + x_r^2
$$

\begin{corollary} \label{symdiag}
Let $\chr K \neq 2$ and $M \in \Mat_{n,n}(K)$ be a symmetric matrix.

Then there exists a $T \in \GL_{n}(K)$, such that $T\tp M T$ is 
a diagonal matrix.
\end{corollary}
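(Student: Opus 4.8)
The plan is to derive the corollary from proposition \ref{evenrk}. Apply that proposition to the symmetric matrix $M$ to obtain a lower triangular $T_1 \in \Mat_{n,n}(K)$ with ones on the diagonal such that $T_1\tp M T_1 = PD$, where $P$ is a symmetric permutation matrix and $D$ is diagonal. Since $M$ is symmetric, $T_1\tp M T_1$ is symmetric, so $PD = (PD)\tp = D\tp P\tp = DP$; in particular $PD$ is symmetric.

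A symmetric permutation matrix corresponds to an involution of $\{1,2,\ldots,n\}$, hence to a permutation that is a disjoint union of transpositions and fixed points. Conjugating $P$ and $D$ simultaneously by a suitable permutation matrix $Q$ (which is a congruence by a $\{0,1\}$-matrix of determinant $\pm 1$, and which keeps $Q\tp P Q$ a symmetric permutation matrix and $Q\tp D Q$ diagonal), I may assume that $PD$ is block diagonal with blocks of size $1$, of the form $(d)$, coming from fixed points, and blocks of size $2$ coming from transpositions $(i\ j)$. On such a transposition block, symmetry of $PD$ forces $D_{ii} = D_{jj} =: d$, so the block equals $\bigl(\begin{smallmatrix} 0 & d \\ d & 0 \end{smallmatrix}\bigr)$.

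It remains to diagonalize each block $\bigl(\begin{smallmatrix} 0 & d \\ d & 0 \end{smallmatrix}\bigr)$ by congruence over $K$, and this is exactly where $\chr K \neq 2$ is used: with the symmetric matrix $S := \bigl(\begin{smallmatrix} 1 & 1 \\ 1 & -1 \end{smallmatrix}\bigr)$, which satisfies $\det S = -2 \neq 0$, one computes $S\tp \bigl(\begin{smallmatrix} 0 & d \\ d & 0 \end{smallmatrix}\bigr) S = \bigl(\begin{smallmatrix} 2d & 0 \\ 0 & -2d \end{smallmatrix}\bigr)$. Assembling the permutation matrix $Q$, the block transformations $S$, and $T_1$ into a single $T \in \GL_n(K)$, I obtain that $T\tp M T$ is diagonal. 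The only real obstacle is the bookkeeping that turns the involution $P$ into the block form above; the characteristic hypothesis enters solely to make $S$ invertible and to split each hyperbolic plane $\bigl(\begin{smallmatrix} 0 & d \\ d & 0 \end{smallmatrix}\bigr)$.

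Alternatively, one could avoid proposition \ref{evenrk} and argue directly by induction on $n$: if $M \neq 0$, then since $\chr K \neq 2$ the form $v \mapsto v\tp M v$ is not identically zero (from $2\, v\tp M w = (v+w)\tp M (v+w) - v\tp M v - w\tp M w$), so choose $v$ with $c := v\tp M v \neq 0$, extend $v$ to a basis of $K^n$ whose other members lie in the $M$-orthogonal complement of $v$, and conclude that $M$ is congruent to $(c) \oplus M'$ with $M' \in \Mat_{n-1,n-1}(K)$ symmetric; then invoke the induction hypothesis. I would present the route through proposition \ref{evenrk} to keep the development self-contained.
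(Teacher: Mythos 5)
Your proof is correct and follows essentially the same route as the paper: apply proposition \ref{evenrk}, use the symmetry of $PD$ to see that $P$ is an involution with equal diagonal entries of $D$ on each transposed pair, and then split each block $\bigl(\begin{smallmatrix} 0 & d \\ d & 0 \end{smallmatrix}\bigr)$ by the same explicit $2\times 2$ congruence, which is where $\chr K \neq 2$ enters. The extra permutation bookkeeping and the sketched inductive alternative are fine but not needed beyond what the paper does.
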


\begin{proof}
Notice that the permutation matrix $P$ in proposition \ref{evenrk}
only has cycles of length $1$ and $2$, because just like $M$,
the product of $P$ and the diagonal matrix $D$
is symmetric. Furthermore, for every cycle of length $2$, the
entries on the diagonal of $D$ which correspond to the two 
coordinates of that cycle are equal. Since
$$
\left( \begin{array}{cc} 1 & 1 \\ -1 & 1 \end{array} \right)
\left( \begin{array}{cc} 0 & c \\ c & 0 \end{array} \right)
\left( \begin{array}{cc} 1 & -1 \\ 1 & 1 \end{array} \right) =
\left( \begin{array}{cc} 2c & 0 \\ 0 & -2c \end{array} \right)
$$
we can get rid of the cycles of length $2$ in $P$.
\end{proof}

\begin{lemma} \label{rk3trafo}
Suppose that $H \in K[x_1,x_2,x_3,x_4]^4$, such that
$$
\jac H_4 = (\, x_1 ~ c x_2 ~ 0 ~ 0 \,)
\qquad \mbox{and} \qquad
\jac H \cdot v = \left( \begin{smallmatrix}
x_1 \\ x_2 \\ x_3 \\ 0 \end{smallmatrix} \right)
$$
for some nonzero $c \in K$, and a $v \in K^4$ of which
the first $3$ coordinates are not all zero. 

Suppose in addition that $\det \jac H = 0$ and that the last column of 
$\jac H$ does not generate a nonzero constant vector.

Then there are $S, T \in \GL_4(K)$, such that for
$\tilde{H} := S H(Tx)$, $\jac \tilde{H}$ is of the form
$$
\jac \tilde{H} =
\left( \begin{array}{cccc}
A_{11} & A_{12} & x_1 & c x_2 \\
A_{21} & A_{22} & x_2 & -x_1 \\
A_{31} & A_{32} & x_3 & \tilde{c}x_4 \\ 
x_1 & c x_2 & 0 & 0
\end{array} \right)
$$
where the coefficients of $x_1$ in $A_{11}$, $A_{21}$, $A_{21}$ are zero.
\end{lemma}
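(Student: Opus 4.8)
The plan is to restate the hypotheses in terms of the Hessians $\hess\tilde H_i$ and to produce the asserted form one column (and the bottom row) at a time, at each stage composing with a pair $(S,T)\in\GL_4(K)^2$ drawn from the subgroup fixing the structure already obtained. First, $\jac H_4 = (x_1, cx_2, 0, 0)$ with $H$ quadratic homogeneous forces $\chr K\neq2$ and $H_4 = \tfrac12x_1^2 + \tfrac c2x_2^2$, so every $\hess H_i$ is symmetric and $\hess H_4 = \operatorname{diag}(1,c,0,0)$. Since $\jac H_i = x\tp\hess H_i$, the hypothesis $\jac H\cdot v = (x_1,x_2,x_3,0)\tp$ is equivalent to $(\hess H_i)\,v = e_i$ for $i\le3$ together with $(\hess H_4)\,v = 0$; as $c\neq0$ the last equation gives $v_1=v_2=0$, and then $v_3\neq0$ because the first three coordinates of $v$ are not all zero.

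I would then pick $S,T\in\GL_4(K)$ sending $v$ to $e_3$ while keeping $\tilde H_4 = \tfrac12 x_1^2 + \tfrac c2 x_2^2$: take $T$ with third column $v$, fourth column a nonzero multiple of $e_4$, and upper-left $2\times2$ block preserving $x_1^2+cx_2^2$ up to a scalar, and take a matching $S$ restoring the third column of $\jac\tilde H$ to $(x_1,x_2,x_3,0)\tp$. After this composition the third column of $\jac\tilde H$ is $(x_1,x_2,x_3,0)\tp$ and its bottom row is $(x_1,cx_2,0,0)$; equivalently, the third row and column of $\hess\tilde H_i$ equal $e_i$ for $i\le 3$ and vanish for $i=4$, while $\hess\tilde H_4 = \operatorname{diag}(1,c,0,0)$ is unchanged. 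In particular $(\hess\tilde H_i)_{34}=0$ for $i\le3$, so $\partial\tilde H_i/\partial x_4$ involves no $x_3$ and the last column of $\jac\tilde H$ has the form $\big(\alpha_1x_1+\beta_1x_2+\gamma_1x_4,\ \alpha_2x_1+\beta_2x_2+\gamma_2x_4,\ \alpha_3x_1+\beta_3x_2+\gamma_3x_4,\ 0\big)\tp$ for scalars $\alpha_i,\beta_i,\gamma_i\in K$.

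Next I would pass to the residual subgroup $G$ of pairs $(S',T')$ preserving both the third column and the bottom row of $\jac\tilde H$. One computes that $G$ is small: modulo adding arbitrary $K$-multiples of $\tilde H_4$ to $\tilde H_1,\tilde H_2,\tilde H_3$ (which only alters the upper-left $2\times2$ blocks of the Hessians) and modulo acting on the pair $(x_1,x_2)$, a member of $G$ is essentially determined. The hypothesis that the last column of $\jac H$ generates no nonzero constant vector is invariant under all transformations used (they fix $Ke_4$) and, in the present situation, says that the $3\times3$ matrix $M = (\alpha_i\ \beta_i\ \gamma_i)_{i=1,2,3}$ has rank at least $2$; the invariant $\det\jac H = 0$ imposes a further polynomial relation on $M$ and on the entries of the $(x_1,x_2)$-block. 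Using elements of $G$ I would pivot $M$ to $\left(\begin{smallmatrix}0&c&0\\-1&0&0\\0&0&\tilde c\end{smallmatrix}\right)$ for a suitable $\tilde c\in K$, turning the last column of $\jac\tilde H$ into $(cx_2,-x_1,\tilde cx_4,0)\tp$. Finally, subtracting from $\tilde H_i$ the multiple $(\hess\tilde H_i)_{11}$ of $\tilde H_4$ --- an element of $G$ --- clears the coefficient of $x_1$ from the $(1,1)$, $(2,1)$ and $(3,1)$ entries of $\jac\tilde H$, i.e.\ from $A_{11},A_{21},A_{31}$; since $\operatorname{diag}(1,c,0,0)$ annihilates $e_3$ and $e_4$, this disturbs neither the third nor the fourth column nor the bottom row of $\jac\tilde H$.

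The step I expect to be the main obstacle is the pivot of $M$ inside the small group $G$: one must verify that the rank-$\ge2$ condition coming from the hypothesis on the last column, together with the relation forced by $\det\jac H = 0$, is exactly enough to bring $M$ to the stated shape --- in particular that it is $\det\jac H = 0$ which forces the third row of $M$ to lose its $(x_1,x_2)$-part --- and that no degenerate subcase escapes this normalization. The remaining steps are routine linear bookkeeping.
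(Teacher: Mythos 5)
Your first half tracks the paper's argument: from $\jac H_4=(\,x_1~cx_2~0~0\,)$ and the fourth coordinate of $\jac H\cdot v$ you get $v_1=v_2=0$ and $v_3\ne 0$, you compose with $S,T$ that send $e_3$ to (a multiple of) $v$ while keeping $\tilde H_4=\tfrac12x_1^2+\tfrac c2x_2^2$, and you observe that no entry of the last column involves $x_3$. The final step, clearing the $x_1$-coefficients of $A_{11},A_{21},A_{31}$ by subtracting multiples of $\tilde H_4$ (equivalently, row operations with the fourth row as pivot), is also exactly what the paper does.

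The crux of the lemma, however, is precisely the step you leave open: why the first two entries of the last column can be normalized to $cx_2$ and $-x_1$. You only assert that $\det\jac H=0$ ``imposes a further polynomial relation'' on your matrix $M$ and that you ``would pivot'' $M$ inside the residual group $G$, explicitly flagging this as the main unresolved obstacle. In the paper this is not a normalization of an arbitrary rank-$\ge 2$ matrix: writing the fourth column as $\binom{B}{0}$ and the fourth row as $(\,C~|~0\,)$ with $C=(\,x_1~cx_2~0\,)$, the coefficient of $x_3^2$ in $\det\jac\tilde H=0$ gives $CB=0$, i.e.\ $x_1B_{11}+cx_2B_{21}=0$, which by itself forces $B_{11}=\lambda cx_2$ and $B_{21}=-\lambda x_1$ for a scalar $\lambda$ --- in particular it kills the $x_4$-parts of these two entries, which your ``rank at least $2$'' reformulation of the hypothesis does not capture (and note that hypothesis is also satisfied when the column is zero; one needs the forced skew shape before the non-generation hypothesis yields $\lambda\ne 0$, after which rescaling the last column of $T$ gives $\lambda=1$). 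Your expectation that $\det\jac H=0$ is what removes the $(x_1,x_2)$-part of the third row of $M$ is misplaced as well: in the paper that part is removed afterwards by adding multiples of rows $1$ and $2$ to row $3$, compensated by the conjugation $U^{-1}\tilde H(Ux)$ which preserves the third column and $\tilde H_4$, while the determinant condition is what rigidifies rows $1$ and $2$. Since essentially all the content of the lemma lies in this normalization, the proposal as written has a genuine gap; the surrounding linear bookkeeping is fine.
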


\begin{proof}
Since the last row of $\jac H$ is $(\, x_1 ~ c x_2  ~ 0 ~ 0 \,)$
and the last coordinate of $\jac \tilde{H} \cdot v$ is zero, we deduce that
$v_1 = v_2 = 0$. As the first $3$ coordinates of $v$ are not all zero, 
we conclude that $v_3 \ne 0$. 

Make $S$ from $v_3 I_4$ by changing column $4$ to $e_4$
Make $T$ from $I_4$ by changing column $3$ to $v_3^{-1} v$, i.e.\@
replacing the last entry of column $3$ by $v_3^{-1} v_4$, and by replacing 
the last column by an arbitrary scalar multiple of $e_4$. Then
\begin{align*}
(\jac \tilde{H}) \cdot e_3 
&= S\, (\jac H)|_{x=Tx} \cdot T e_3
 = \big(S\,(\jac H) \cdot v_3^{-1} v\big)\big|_{x=Tx} \\
&= v_3^{-1} S\, \left.
  \left( \begin{smallmatrix} x_1 \\ x_2 \\ x_3 \\ 0 \end{smallmatrix} \right)
  \right|_{x=Tx}
 = \left( \begin{smallmatrix} x_1 \\ x_2 \\ x_3 \\ 0 \end{smallmatrix} \right)
\end{align*}
and 
$$
\tilde{H}_4 = H_4(Tx) = H_4
$$
So the third column of $\jac \tilde{H}$ is as claimed. It follows that
$\jac \tilde{H}$ is as $\tilde{M}$ in lemma \ref{Irlem}, with 
$L_1 = x_3$, $L_2 = x_2$, $L_3 = x_1$, and $L_4 = x_4$. Define 
$A$, $B$, $C$ and $D$ as in \eqref{ABCD}.

Just like the last column of $\jac H$, the last column of $\jac \tilde{H}$ does 
not generate a nonzero constant vector. Consequently, $B_{11}$ and $B_{21}$ are not both 
zero. From \eqref{D0CB0}, it follows that $C_{11} B_{11} + C_{12} B_{21} = 0$.
So we can choose the last column of $T$, such that $B_{11} = C_{12} = c x_2$ 
and $B_{21} = -C_{11} = -x_1$.

The coefficient of $x_3$ in $B_{31}$ is zero, and by changing the third row 
of $I_4$ on the left of the diagonal in a proper way, we can get an 
$U \in \GL_4$ such that
$$
U \binom{B}{D} = \left( \begin{smallmatrix} 
B_{11} \\ B_{21} \\ \tilde{c} x_4 \\ D_{11} 
\end{smallmatrix} \right) = \left( \begin{smallmatrix} 
c x_2 \\ - x_3 \\ \tilde{c} x_4 \\ 0
\end{smallmatrix} \right)
$$
for some $\tilde{c} \in K$. Since $U^{-1}$ can be obtained 
by changing the third row of $I_4$ on the left of the diagonal 
in a proper way as well, we infer that
\begin{align*}
\jac \big(U^{-1} \tilde{H} (Ux)\big) \cdot e_3 
&= U^{-1}\, (\jac \tilde{H})|_{x = Ux} \,U \cdot e_3 
 = U^{-1}\, (\jac \tilde{H})|_{x = Ux} \cdot e_3 \\
&= U^{-1} \left.
  \left( \begin{smallmatrix} x_1 \\ x_2 \\ x_3 \\ 0 \end{smallmatrix} \right)
  \right|_{x=Ux}
 = \left( \begin{smallmatrix} x_1 \\ x_2 \\ x_3 \\ 0 \end{smallmatrix} \right)
\end{align*}
and 
$$
(U^{-1})_4\, \tilde{H} (Ux) = \tilde{H}_4 (Ux) = \tilde{H}_4
$$
So if we replace $S$ and $T$ by $U^{-1} S$ and $TU$ respectively, then $\tilde{H}$ will
be replaced by $U^{-1} \tilde{H}(Ux)$.

So we can get $B_{31}$ of the form $\tilde{c}x_4$ for some $\tilde{c} \in K$.
Finally, we can clean the coefficients of $x_1$ in $A_{11}$, $A_{21}$, $A_{21}$
with row operations, using $C_{11}$ as a pivot. So we can get the coefficients of 
$x_1$ in $A_{11}$, $A_{21}$, $A_{21}$ equal to zero.
\end{proof}

\begin{lemma} \label{rk3calc}
Suppose that $\det \jac \tilde{H} = 0$ and $\jac \tilde{H}$ is of the form 
$$
\left( \begin{array}{cccc}
A_{11} & A_{12} & x_1 & c x_2 \\
A_{21} & A_{22} & x_2 & -x_1 \\
A_{31} & A_{32} & x_3 & \tilde{c}x_4 \\ 
x_1 & c x_2 & 0 & 0
\end{array} \right)
$$
where $c,\tilde{c} \in K$, such that $c \neq 0$. If the coefficients of
$x_1$ in $A_{11}$, $A_{21}$, $A_{21}$ are zero, then 
$$
\tilde{H} = \left( \begin{array}{c}
x_1 x_3 + c x_2 x_4 \\ x_2 x_3 - x_1 x_4 \\
\frac12 x_3^2 + \frac{c}2 x_4^2 \\ \frac12 x_1^2 + \frac{c}2 x_2^2 
\end{array} \right)
\qquad \mbox{and} \qquad
\jac \tilde{H} = \left( \begin{array}{cccc}
x_3 & c x_4 & x_1 & c x_2 \\
- x_4 & x_3 & x_2 & -x_1 \\
0 & 0 & x_3 & c x_4 \\
x_1 & c x_2 & 0 & 0
\end{array} \right)
$$
\end{lemma}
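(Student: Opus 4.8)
The plan is to combine two facts: since $\tilde{H}$ is quadratic homogeneous, each row $\jac \tilde{H}_i$ of $\jac \tilde{H}$ is a gradient, so $\hess \tilde{H}_i$ is symmetric; and $\det \jac \tilde{H} = 0$. Symmetry of the Hessians will reduce $\jac \tilde{H}$ to a matrix depending on finitely many scalars in $K$, and vanishing of the determinant will then kill all of them.

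First I would work out what symmetry of $\hess \tilde{H}_1$, $\hess \tilde{H}_2$, $\hess \tilde{H}_3$ imposes on $A_{11}, A_{12}, A_{21}, A_{22}, A_{31}, A_{32}$. Since the third and fourth columns of $\jac \tilde{H}$ are already fixed, matching them against $A_{i1}, A_{i2}$ via symmetry of $\hess \tilde{H}_i$ forces $A_{31}$ and $A_{32}$ to be $K$-linear in $x_1, x_2$; $A_{11} - x_3$ and $A_{22} - x_3$ to be $K$-linear in $x_1, x_2$; and $A_{21} + x_4$ and $A_{12} - c x_4$ to be $K$-linear in $x_1, x_2$. The hypothesis then removes the $x_1$-terms of $A_{11}, A_{21}, A_{31}$, and equating the $(1,2)$- and $(2,1)$-entries of each $\hess \tilde{H}_i$ identifies the $x_2$-coefficient of $A_{i1}$ with the $x_1$-coefficient of $A_{i2}$. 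What remains is $A_{31} = \lambda x_2$, $A_{32} = \lambda x_1 + \nu x_2$, $A_{11} = x_3 + \alpha x_2$, $A_{12} = c x_4 + \alpha x_1 + \gamma x_2$, $A_{21} = -x_4 + \delta x_2$, $A_{22} = x_3 + \delta x_1 + \zeta x_2$, for scalars $\alpha, \gamma, \delta, \zeta, \lambda, \nu \in K$. I would also observe that the mere existence of a polynomial $\tilde{H}_4$ with $\jac \tilde{H}_4 = (x_1, c x_2, 0, 0)$ forces $\chr K \neq 2$, since $\partial_{x_1} \tilde{H}_4 = x_1$ has no polynomial solution in characteristic $2$.

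Second, I would expand $\det \jac \tilde{H}$ along its last row, obtaining $\det \jac \tilde{H} = -x_1 M_{41} + c x_2 M_{42}$ where $M_{41}$ and $M_{42}$ are the $3 \times 3$ minors obtained by deleting the last row together with the first, respectively the second, column, and then read off the coefficients of a few degree-$4$ monomials. I expect the coefficient of $x_1^3 x_3$ to be $-\alpha$, of $x_1^4$ to be $\lambda$, of $x_1^3 x_2$ to be $\nu$, of $x_1^2 x_3 x_4$ to be $\tilde{c} - c$, of $x_1^3 x_4$ to be $\delta \tilde{c}$, of $x_1^2 x_2 x_4$ to be $\tilde{c}(\zeta - \alpha)$, and of $x_1^2 x_2 x_3$ to be $-\gamma - c\delta$. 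Setting $\det \jac \tilde{H} = 0$ and solving these in this order (using $c \neq 0$, hence $\tilde{c} = c \neq 0$) gives $\alpha = \gamma = \delta = \zeta = \lambda = \nu = 0$ and $\tilde{c} = c$. Part of this can be read off more cheaply after specializing variables; for instance $\det \jac \tilde{H}$ at $x_2 = x_4 = 0$ equals $\lambda x_1^4 - \alpha x_1^3 x_3$, which already yields $\alpha = \lambda = 0$.

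Substituting these values into $\jac \tilde{H}$ produces exactly the claimed matrix, and since $\chr K \neq 2$ and $\tilde{H}$ is quadratic homogeneous, the relation $\jac \tilde{H} \cdot x = 2\tilde{H}$ recovers the claimed $\tilde{H}$ (equivalently, integrate each row). The argument is purely computational; I expect the main, if modest, obstacle to be the bookkeeping in the determinant expansion of the second step, together with checking that the case analysis behind the Hessian constraints in the first step is complete. A convenient sanity check is that the claimed final matrix really does have determinant $0$, e.g.\ by expanding along its third row.
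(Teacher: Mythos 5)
Your proof is correct and takes essentially the same route as the paper: both reduce $\jac \tilde{H}$ to the same six-parameter form (your $\alpha,\gamma,\delta,\zeta,\lambda,\nu$ are the paper's $a_1,b_1,a_2,b_2,a_3,b_3$) and then force all parameters to vanish and $\tilde{c}=c$ by reading off coefficients of degree-$4$ monomials in $\det \jac \tilde{H}$. Your monomial choices differ only slightly from the paper's (e.g.\ $x_1^3x_4$ and $x_1^2x_2x_4$ instead of $x_2^3x_3$ and $x_1x_2^2x_3$), and the listed coefficients are all correct, so this is just a minor variation of the same computation.
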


\begin{proof}
Let $a_i$ and $b_i$ be the coefficients of $x_1$ and $x_2$ in
$A_{i2}$ respectively, for each $i \le 3$. Then
\begin{gather*}
\tilde{H} = \left( \begin{array}{c} 
a_1 x_1 x_2 + \frac12 b_1 x_2^2 + x_1 x_3 + c x_2 x_4 \\
a_2 x_1 x_2 + \frac12 b_2 x_2^2 + x_2 x_3 - x_1 x_4 \\
a_3 x_1 x_2 + \frac12 b_3 x_2^2 + \frac12 x_3^2 + \frac{\tilde{c}}2 x_4^2 \\
\frac12 x_1^2 + \frac{c}2 x_2^2 
\end{array} \right)
\qquad \mbox{and} \\
\jac \tilde{H} = \left( \begin{array}{cccc}
a_1 x_2 + x_3 & a_1 x_1 + b_1 x_2 + c x_4 & x_1 & c x_2 \\
a_2 x_2 - x_4 & a_2 x_1 + b_2 x_2 + x_3 & x_2 & -x_1 \\
a_3 x_2 & a_3 x_1 + b_3 x_2 & x_3 & \tilde{c} x_4 \\
x_1 & c x_2 & 0 & 0
\end{array} \right)
\end{gather*}
Consequently, it suffices to show that $a_i = b_i = 0$ for each $i \le 3$,
and that $\tilde{c} = c$.

Suppose that the coefficients of $x_1^4$ in $\det \jac \tilde{H}$ are zero.
Then we see by expansion along rows $3$, $4$, $1$, in that order,
that $a_3 x_1 = 0$. Hence the third row of $\jac \tilde{H}$ reads
$$
\jac \tilde{H}_3 = (\, 0 ~ b_3 x_2 ~ x_3 ~ \tilde{c} x_4 \,)
$$
Since the coefficients of $x_1^3 x_2$ and $x_1^3 x_3$ in $\det \jac \tilde{H}$ 
are zero, we see by expansion along rows $3$, $4$, $1$, in that order, that
$b_3 x_2 = a_1 x_1 = 0$. Hence the third row of $\jac \tilde{H}$ reads
$$
\jac \tilde{H}_3 = (\, 0 ~ 0 ~ x_3 ~ \tilde{c} x_4 \,)
$$
Since the coefficient of $x_2^3 x_3$ in $\det \jac \tilde{H}$ 
is zero, we see by expansion along rows $3$, $4$, $2$, in that order, that
$a_2 x_2 = 0$. So
$$
\jac \tilde{H} = \left( \begin{array}{cccc}
x_3 & b_1 x_2 + c x_4 & x_1 & c x_2 \\
- x_4 & b_2 x_2 + x_3 & x_2 & -x_1 \\
0 & 0 & x_3 & \tilde{c} x_4 \\
x_1 & c x_2 & 0 & 0
\end{array} \right)
$$
Since the coefficient of $x_1^2 x_3 x_4$ in $\det \jac \tilde{H}$ 
is zero, we see by expansion along row $3$, and columns $2$ and $1$, 
in that order, that $\tilde{c} x_4 = c x_4$.

Since the coefficient of $x_1 x_2^2 x_3$ in $\det \jac \tilde{H}$ 
is zero, we see by expansion along row $3$, and columns $1$, $4$, in that order, 
that $b_2 x_2 = 0$. Using that and that the coefficient of $x_1^2 x_2 x_3$ in 
$\det \jac \tilde{H}$ is zero, we see by expansion along row $3$, and columns 
$1$, $2$, in that order, that $b_1 x_2 = 0$. So $\tilde{H}$ is as claimed.
\end{proof}

\begin{proof}[Proof of theorem \ref{rk3}]
Take $\tilde{M} = \jac \tilde{H}$ and take $A$, $B$, $C$, $D$ as in \eqref{ABCD}.
We distinguish three cases:
\begin{itemize}

\item \emph{The column space of $B$ contains a nonzero constant vector.}

Take $U$ and $\tilde{U}$ as in the corresponding case in the proof of corollary 
\ref{rk4}.
\begin{compactitem}

\item If case (1) of theorem \ref{rkr} applies for $\tilde{U} \tilde{H}$,
then case (1) of theorem \ref{rk3} follows, because 
$$
\tfrac12 (r-1)^2 - \tfrac12 (r-1) + 1 = \tfrac42 - \tfrac22 + 1 = 2 = r - 1
$$

\item If case (2) of theorem \ref{rkr} applies for $\tilde{U} \tilde{H}$, 
then case (1) or case (2) of theorem \ref{rk3} follows.

\item If case (3) of theorem \ref{rkr} applies for $\tilde{U} \tilde{H}$, 
then case (1) or case (3) of theorem \ref{rk3} follows.

\end{compactitem}

\item \emph{The columns of $B$ are dependent over $K$ in pairs.}

If $\chr K = 2$, then $\tilde{H}$ is as in (3) of theorem \ref{rkr}, which
is included in (4) of theorem \ref{rk3}. So assume that $\chr K \neq 2$.
If $B = 0$, then $\tilde{H}$ is as in (2) of theorem \ref{rkr}, which
is included in (4) of theorem \ref{rk3} as well. So assume that $B \ne 0$.

From (i) of lemma \ref{Ccoldep1}, it follows that the columns of $C$ are
dependent over $K$. If $\rk C \ge 2$, then $\rk C = 2$ because 
$\rk B + \rk C \le r = 3$ and $B \ne 0$, and $B$ contains a nonzero constant
vector because of lemma \ref{BCrkr}. If $\rk C = 0$, then (1) of theorem \ref{rk3}
follows. So assume that $\rk C = 1$.

Since $\rk C = 1 = \frac12 \cdot 1^2 + \frac12 \cdot 1$, we deduce from 
theorem \ref{rkr} that the rows of $C$ are dependent over $K$ in pairs.
So we can choose $S$ such that only the first row of $C$ is nonzero.

From corollary \ref{symdiag}, it follows that there exists an $U' \in \GL_r(K)$,
such that 
$$
\hess_{x_1,x_2,\ldots,x_r} \big(\tilde{H}_{r+1}(U'x)\big)
= (U')^{-1} (\hess \tilde{H}_{r+1})\, U'
$$
is a diagonal matrix. Furthermore, the first entry on the diagonal is nonzero 
because $\hess \tilde{H}_{r+1} \ne 0$, and the last entry on the diagonal is zero
because the columns of $C$ are dependent over $K$. 
By adapting $S$, we can obtain that the entries on the diagonal 
of $\hess_{x_1,x_2,\ldots,x_r} (\tilde{H}_{r+1}(U'x)$ are $1$, $c$ and $0$, in that
order. 

By adapting $S$ and $T$, we can replace $\tilde{H}$ by
$$
\left( \begin{array}{cc}
(U')^{-1} & \zeromat \\ \zeromat & I_{m-3}        
\end{array} \right) \tilde{H}\left(\left( \begin{array}{cc}
U' & \zeromat \\ \zeromat & I_{n-3} 
\end{array} \right) x \right)
$$ 
Then the first row of $C$ becomes $(\, x_1 ~ c x_2  ~ 0 \,)$.
We distinguish two cases.
\begin{compactitem}

\item $c = 0$.

Notice that
$$
\jac (\tilde{H}|_{x_1=1}) = (\jac \tilde{H})|_{x_1=1} \cdot (\jac (1,x_2,x_3,\ldots,x_m))
$$
so $\rk \jac (\tilde{H}|_{x_1=1}) = r - 1 = 2$, and we can apply \cite[Theorem 2.3]{1601.00579}.
\begin{compactitem}

\item In the case of \cite[Theorem 2.3]{1601.00579} (1), case (2) of theorem
\ref{rkr} follows, which yields (4) of theorem \ref{rk3}.

\item In the case of \cite[Theorem 2.3]{1601.00579} (2), case (1) of theorem 
\ref{rk3} follows, because any linear combination of the rows of $\jac \tilde{H}$
which is dependent on $e_1$, is linearly dependent on $C_1$.

\item In the case of \cite[Theorem 2.3]{1601.00579} (3), case (1) or case (2) 
of theorem \ref{rk3} follows.

\end{compactitem}

\item $c \ne 0$.

From (ii) of lemma \ref{Ccoldep1} and lemma \ref{rk3trafo}, it follows that we can choose
$S$ and $T$, such that the leading principal minor matrix of size $4$ 
of $\jac \tilde{H}$ is of the form 
$$
\left( \begin{array}{cccc}
A_{11} & A_{12} & x_1 & c x_2 \\
A_{21} & A_{22} & x_2 & -x_1 \\
A_{31} & A_{32} & x_3 & \tilde{c}x_4 \\ 
x_1 & c x_2 & 0 & 0
\end{array} \right)
$$
where the coefficients of $x_1$ in $A_{11}$, $A_{21}$, $A_{21}$ are zero.
From lemma \ref{rk3calc}, case (5) of theorem \ref{rk3} follows. 

\end{compactitem}

\item \emph{None of the above.}

We first show that $\rk B \ge 2$. For that purpose, assume that $\rk B \le 1$.
Since the columns of $B$ are not dependent over $K$ in pairs, we deduce that
$B \ne 0$, and it follows from \cite[Theorem 2.1]{1601.00579} that the rows of 
$B$ are dependent over $K$ in pairs. This contradicts the fact that the column 
space of $B$ does not contain a nonzero constant vector. 
So $\rk B \ge 2$ indeed.

From $\rk B + \rk C = r$, we deduce that $\rk C \le 1$. If $C = 0$, then (1)
of theorem \ref{rk3} follows, so assume that $C \ne 0$. Then $\rk C = 1$ and
$\rk B = r - 1 = 2$. From lemmas \ref{Ccoldep2} and \ref{BCrkr}, we deduce
that the column space of $B$ contains a nonzero constant vector, which is a 
contradiction.

\end{itemize}
So it remains to prove the last claim. In case of (5) of theorem \ref{rk3}, 
the last claim follows from the proof of corollary \ref{rktrdeg}. Otherwise, 
the last claim of theorem \ref{rk3} follows in a similar manner as
the last claim of theorem \ref{rkr}.
\end{proof}

\begin{lemma} \label{F2lem}
Let $K$ be a field of characteristic $2$ and $L$ be an 
extension field of $K$. If theorem {\upshape\ref{rk3}} holds,
but for $L$ instead of $K$, then theorem {\upshape\ref{rk3}} 
holds.
\end{lemma}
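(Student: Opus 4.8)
The plan is to mirror the proof of Lemma~\ref{Fqlem}: take theorem~\ref{rk3} over $L$ as a black box and push its conclusion down to $K$. Since $\chr K=2$, cases (2) and (5) of theorem~\ref{rk3} cannot occur, so over $L$ the map $H$ lands in case (1), (3), or (4). The mechanism that drives the descent is that the \emph{sparsity patterns} in play are field-independent. Concretely, the least number of nonzero rows of $\jac\big(SH(Tx)\big)$, taken over all $S\in\GL_m$ and $T\in\GL_n$, equals $\dim\operatorname{span}\big(\text{rows of }\jac H\big)$ --- a row being viewed as a vector in the space of $n$-tuples of linear forms --- and this minimum is attained with $T$ the identity; the dimension is unchanged under the extension $L/K$ because the entries of $\jac H$ lie in $K[x]$. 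The least number of nonzero columns behaves identically, with $S$ the identity, and the two minimizations may be performed one after the other, because an invertible row operation creates no new zero column and an invertible column operation creates no new zero row.

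This disposes of two of the three cases. If $H$ is in case (1) over $L$, then $\operatorname{span}\big(\text{rows of }\jac H\big)$ already has dimension at most $3$ over $K$, so some $S\in\GL_m(K)$ makes only the first $3$ rows of $\jac(SH)$ nonzero: that is case (1) over $K$. If $H$ is in case (4) over $L$ --- which, as $\chr K=2$, means case (3) of theorem~\ref{rkr}, namely at most $6$ nonzero rows and at most $4$ nonzero columns --- then the two descents jointly yield $S\in\GL_m(K)$ and $T\in\GL_n(K)$ for which $\jac\big(SH(Tx)\big)$ has at most $6$ nonzero rows and at most $4$ nonzero columns, and, with $\chr K=2$, this is exactly case (3) of theorem~\ref{rkr}, hence case (4) of theorem~\ref{rk3} over $K$.

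The real obstacle is case (3) over $L$, where the normal form prescribes the \emph{values} $\jac(\tilde H_2,\tilde H_3,\tilde H_4)=\jac(x_1x_2,x_1x_3,x_2x_3)$ rather than a sparsity pattern, and where the usual route of theorem~\ref{rk3} through Lemma~\ref{Irlem} is blocked since $|K|$ may be $2<r=3$. The plan is to replace Lemma~\ref{Irlem} by an argument with Hessians. First use the row descent to reduce to $m=\dim\operatorname{span}(\text{rows of }\jac H)$ components; if $m\le 3$ we are in case (1), so $m=4$, and if moreover $\dim\operatorname{span}(\text{columns of }\jac H)\le 4$ we are in case (4) over $K$ by the previous paragraph --- so assume $m=4$ with column dimension exceeding $4$. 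Put $\mathcal H:=\operatorname{span}_K(\hess H_1,\ldots,\hess H_4)$, a space of symmetric matrices with zero diagonal; since $\jac$ and $\hess$ have the same kernel (the sums of squares) on quadratic forms in characteristic $2$, $\dim\mathcal H=m=4$. Over $L$ the normal form supplies linear forms $\ell_1,\ell_2,\ell_3$, independent over $L$ and spanning a $3$-space $V_L$, with $\mathcal H\otimes_K L$ containing the $3$-dimensional space $\mathcal B_L:=\operatorname{span}\big(\hess(\ell_1\ell_2),\hess(\ell_1\ell_3),\hess(\ell_2\ell_3)\big)$. The key point is that $\mathcal B_L$ is the \emph{unique} $3$-dimensional subspace of $\mathcal H\otimes_K L$ of this shape: a second one, say $\mathcal B'$, coming from a $3$-space $V'\ne V_L$, would meet $\mathcal B_L$ only in forms supported on $V_L\cap V'$, a space of dimension at most $2$, hence in dimension at most $1$, contradicting $\dim\mathcal B_L+\dim\mathcal B'-\dim(\mathcal H\otimes L)\ge 3+3-4=2$. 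Uniqueness makes $V_L$ invariant under $\operatorname{Aut}(L/K)$, hence (passing to a finite Galois subextension if necessary) defined over $K$. After a change of variables over $K$ carrying $V_L$ to $\operatorname{span}(x_1,x_2,x_3)$, the space $\mathcal H$ contains $\hess(x_1x_2)$, $\hess(x_1x_3)$ and $\hess(x_2x_3)$; choosing $S\in\GL_4(K)$ that sends $(H_2,H_3,H_4)$ to quadratic forms with precisely these Hessians --- possible because the Hessian map forgets only squares, which $\jac$ also forgets in characteristic $2$ --- produces $\tilde H=SH$ with $\jac(\tilde H_2,\tilde H_3,\tilde H_4)=\jac(x_1x_2,x_1x_3,x_2x_3)$ and only $4$ rows, i.e.\@ case (3) of theorem~\ref{rk3} over $K$.

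Thus the difficulty is concentrated entirely in case (3): everywhere else the lemma is a transparent descent of linear-algebra data, but there the target normal form is rigid and the standard machinery is inapplicable over $\mathbb{F}_2$, so one must carry out the Hessian-pencil argument and, in particular, verify that the distinguished $3$-dimensional space of linear forms is genuinely unique inside $\mathcal H\otimes_K L$ --- the step where a careless treatment could miss a degenerate configuration.
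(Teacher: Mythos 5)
Your reduction of the easy cases is sound and matches the paper's mechanism (the paper dismisses everything except case (3) of theorem \ref{rk3} with the same row/column-dependence descent as in lemma \ref{Fqlem}), and your uniqueness argument for the hard case is correct: inside the $4$-dimensional space $\mathcal{H}\otimes_K L$ two subspaces of the form $\wedge^2$ of distinct $3$-spaces of linear forms would intersect in dimension at most $1$, contradicting $3+3-4=2$. The genuine gap is the next sentence, ``uniqueness makes $V_L$ invariant under $\operatorname{Aut}(L/K)$, hence (passing to a finite Galois subextension if necessary) defined over $K$.'' The lemma allows an arbitrary extension $L/K$ of a characteristic-$2$ field: $L$ may be transcendental over $K$, so the finitely many coefficients involved need not lie in any algebraic, let alone Galois, subextension; and even after replacing $L$ by $\bar{K}$, invariance under $\operatorname{Gal}(\bar{K}/K)$ only yields definability over the perfect closure of $K$, since the fixed field of the absolute Galois group in characteristic $2$ is $K^{1/2^\infty}$, not $K$. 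Purely inseparable residue fields are exactly the degenerate configuration your scheme cannot exclude by Galois-theoretic means, so the crucial rationality of $V_L$ over $K$ is asserted, not proved. It is likely rescuable --- e.g.\ by expanding elements of $\mathcal{H}\otimes_K L$ along a $K$-basis of $L$ and observing that every component lies in $\mathcal{H}$, so that a non-$K$-rational $V_L$ forces more than $4$ independent elements into $\mathcal{H}$ --- but that is a real argument you would still have to supply, and it is the heart of the matter, since everything else in your case (3) is routine.

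For comparison, the paper avoids any descent of subspaces: in case (3) it transfers the explicit left kernel row $(\,0~x_3~x_2~x_1~y_4~\cdots~y_m\,)$ of the normal form back through $S$, concludes for $m=4$ that the components of the $K(x)$-left-kernel vector of $\jac H$ satisfy a nontrivial relation over $L$ and hence over $K$, so that the column space of $\jac H$ contains a nonzero vector of $K^4$, and then re-enters the first case of the proof of theorem \ref{rk3}; for $m>4$ it simply drops a $K$-dependent row and inducts. That route buys a short proof using only linear dependence over subfields, whereas your Hessian-pencil route, if the rationality step were completed, would have the merit of producing the case (3) normal form directly without revisiting the proof of theorem \ref{rk3}.
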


\begin{proof}
Take $H$ as in theorem \ref{rk3}. Then $H$ is as in (1), (3) or (4)
of theorem \ref{rk3}, but with $L$ instead of $K$. 
We assume that $H$ is as in (3) of theorem \ref{rk3} with $L$ 
instead of $K$, because the other case follows in a similar manner
as lemma \ref{Fqlem}.

Notice that
$$
\big(\,0 ~ x_3 ~ x_2 ~ x_1 ~ y_4 ~ y_5 ~ \cdots ~ y_m\,\big) \cdot 
\jac \tilde{H} = 0
$$
Since $\jac \tilde{H} = S^{-1} (\jac H)|_{Tx} T$, it follows that 
$$
\big(\,0 ~ x_3 ~ x_2 ~ x_1 ~ y_4 ~ y_5 ~ \cdots ~ y_m\,\big) \cdot S \cdot \jac H = 0
$$
as well. 

Suppose first that $m = 4$. As $\rk \jac H = m-1$, there exists a nonzero
$v \in K(x)^m$ such that $\ker \big(\,v_1 ~ v_2 ~ v_3 ~ v_4\,\big)$ is equal 
to the column space of $\jac H$. Since the column space of $\jac H$ is contained
in $\ker \big((\,0~x_3~x_2~x_1\,) \cdot S\big)$, it follows that 
$\big(\, v_1 ~ v_2 ~ v_3 ~ v_4 \,\big)$ is dependent on $(\,0~x_3~x_2~x_1\,) \cdot S$. 
So
$$
v_1 (S^{-1})_{11} + v_2 (S^{-1})_{21} + v_3 (S^{-1})_{31} + v_4 (S^{-1})_{41} = 0
$$
and the components of $v$ are dependent over $L$. Consequently, the 
components of $v$ are dependent over $K$. So $\ker \big(\,v_1 ~ v_2 ~ v_3 ~ v_4\,)$
contains a nonzero vector over $K$, and so does the column space of $\jac H$. 
Now we can follow the same argumentation as in the first case in the proof
of theorem \ref{rk3}.

Suppose next that $m > 4$. Then the rows of $\jac H$ are dependent over
$L$. Hence the rows of $\jac H$ are dependent over $K$ as well. So we
may assume that the last row of $\jac H$ is zero. By induction on $m$,
$(H_1,H_2,\ldots,H_{m-1})$ is as $H$ in theorem \ref{rk3}. As $H_m = 0$,
we conclude that $H$ satisfies theorem \ref{rk3}.
\end{proof}

\section{rank 3 with nilpotency}

\begin{theorem} \label{rk3np}
Let $H \in K[x]^n$ be quadratic homogeneous, such that $\jac H$ is nilpotent and 
$\rk \jac H \le 3$. Then there exists a $T \in \GL_n(K)$, such that for 
$\tilde{H} := T^{-1} H(Tx)$, one of the following statements holds:
\begin{enumerate}[\upshape (1)]

\item $\jac \tilde{H}$ is lower triangular with zeroes on the diagonal;

\item $n \ge 5$, $\rk \jac H = 3$, and $\jac \tilde{H}$ is of the form
$$
\left( \begin{array}{ccc@{\qquad}ccc}
0 & x_5 & 0 & * & \cdots & * \\
x_4 & 0 & -x_5 & * & \cdots & * \\
0 & x_4 & 0 & * & \cdots & * \\[10pt]
0 & 0 & 0 & 0 & \cdots & 0 \\
\vdots & \vdots & \vdots & \vdots & & \vdots \\
0 & 0 & 0 & 0 & \cdots & 0
\end{array} \right)
$$

\item $n \ge 6$, $\rk \jac H = 3$, $\chr K = 2$ and $\jac \tilde{H}$ is 
of the form
$$
\left( \begin{array}{ccc@{\qquad}ccc@{\qquad}ccc}
0 & x_6 & 0 & 0 & 0 & x_2 & 0 & \cdots & 0 \\
x_5 & 0 & -x_6 & 0 & * & * & * & \cdots & * \\
0 & x_5 & 0 & 0 & x_2 & 0 & 0 & \cdots & 0 \\[10pt]
0 & 0 & 0 & 0 & x_6 & x_5 & 0 & \cdots & 0 \\
0 & 0 & 0 & 0 & 0 & 0 & 0 & \cdots & 0 \\
0 & 0 & 0 & 0 & 0 & 0 & 0 & \cdots & 0 \\[10pt]
0 & 0 & 0 & 0 & 0 & 0 & 0 & \cdots & 0 \\
\vdots & \vdots & \vdots & \vdots & \vdots & \vdots & \vdots & & \vdots \\
0 & 0 & 0 & 0 & 0 & 0 & 0 & \cdots & 0
\end{array} \right)
$$
 
\end{enumerate}
Furthermore, $x + H$ is tame if $\chr K \neq 2$, and 
there exists a tame invertible map 
$x + \bar{H}$ such that $\bar{H}$ is quadratic homogeneous and 
$\jac \bar{H} = \jac H$ in general.

Conversely, $\jac \tilde{H}$ is nilpotent in each of the three statements
above. Furthermore, $\rk \jac \tilde{H} = 3$ in {\upshape (2)},
$\rk \jac \tilde{H} = 4$ in {\upshape (3)} if $\chr K \neq 2$, and
$\rk \jac \tilde{H} = 3$ in {\upshape (3)} if $\chr K = 2$.
\end{theorem}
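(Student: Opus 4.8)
The plan is to build on the rank-3 classification of Theorem \ref{rk3} by adding the nilpotency constraint, exploiting that nilpotency of $\jac H$ is preserved under conjugation $H \mapsto T^{-1}H(Tx)$ (unlike the looser equivalence $H \mapsto SH(Tx)$ used before). First I would reduce to the case $\rk \jac H = 3$, since $\rk \jac H \le 2$ forces $\jac H$ to be conjugate to a triangular matrix by known low-rank results, which gives statement (1). With $r = 3$, I would run through the five cases of Theorem \ref{rk3}: in each case we have a normal form for $\jac \tilde H$ up to $S$ on the left and $T$ on the right, but now we must impose that the actual matrix $\jac H$ (equivalently $S^{-1}(\jac \tilde H)|_{T^{-1}x}S$ for the \emph{conjugating} transformation) is nilpotent, and then re-coordinatize by a single $T \in \GL_n(K)$.

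The core of the argument is a case analysis. When only the first few rows of $\jac \tilde H$ are nonzero (cases (1)--(3) of Theorem \ref{rk3}, roughly), the bottom rows being zero together with nilpotency should let us argue that the whole matrix is conjugate into the strictly-lower-triangular form (1) of the theorem: a nilpotent matrix whose nonzero part lives in a small block can be triangularized over $K$ provided the relevant smaller nilpotent pieces can be, and one checks the obstruction is absent in these sizes. The interesting cases are (5) of Theorem \ref{rk3}, the rotation-type normal form $\bigl(x_1x_3 + cx_2x_4,\ x_2x_3 - x_1x_4,\ \tfrac12 x_3^2 + \tfrac c2 x_4^2,\ \tfrac12 x_1^2 + \tfrac c2 x_2^2\bigr)$, and the characteristic-2 sub-case of (3)/(4), which are exactly the sources of the non-triangularizable forms (2) and (3) in the theorem. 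Here I would explicitly conjugate the $4\times 4$ (resp.\ $6\times 6$) active block and show nilpotency of $\jac H$ forces the extra relations that produce the displayed $5$- and $6$-variable normal forms, tracking how the trailing $*$-columns are constrained; the bookkeeping that the nonzero entries land in variables $x_4,x_5$ (resp.\ $x_5,x_6$) and not elsewhere is where care is needed.

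For the tameness addendum: in characteristic $\ne 2$ each normal form is visibly a composition of elementary/triangular automorphisms (form (1) directly; forms (2) and their analogues can be written as explicit products of triangular maps, a standard rank-$\le 3$ tameness fact), so $x + H$ is tame. In characteristic $2$, the form (3) need not itself give a tame $x+H$, but one replaces $H$ by a $\bar H$ with the same Jacobian — adjusting only the "square" terms $x_i^2$, which are invisible to $\jac$ in characteristic $2$ — chosen so that $x + \bar H$ is a product of triangular maps; this is the same device used in the remark after Proposition \ref{evenrk}. Finally, the converse direction is a direct computation: each displayed $\jac \tilde H$ is checked to be nilpotent (the active blocks square or cube to zero), and the rank of each active block is read off, giving $\rk \jac \tilde H = 3$ in (2), and $\rk \jac \tilde H = 4$ or $3$ in (3) according to whether $\chr K \ne 2$ or $\chr K = 2$ (in characteristic $2$ the rows $(0\ x_6\ 0\ \cdots)$ and the row $(0\ 0\ 0\ 0\ x_6\ x_5\ 0\ \cdots)$ become dependent with another, dropping the rank by one). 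I expect the main obstacle to be the characteristic-2 analysis of case (3): controlling the trailing columns and verifying that no further conjugation can remove the off-block entries, i.e.\ that form (3) is genuinely not conjugate to a triangular matrix.
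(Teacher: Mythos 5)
Your overall skeleton (reduce to $\rk \jac H \le 2$ via the known triangularizability result, then run through the five cases of Theorem \ref{rk3} with nilpotency imposed, and in characteristic $2$ obtain tameness only after replacing $H$ by a $\bar{H}$ with the same Jacobian) matches the paper, but the heart of your case analysis assigns the exceptional forms to the wrong cases, and the step you rely on for cases (1)--(3) of Theorem \ref{rk3} is false. Form (2) of the theorem has only its first three rows nonzero, so it lives precisely in case (1) of Theorem \ref{rk3}; a nilpotent Jacobian whose nonzero part is a $3\times 3$ block with entries that are linear forms in $x_4,\ldots,x_n$ is \emph{not} always similar over $K$ to a triangular matrix -- the matrix in (2) is exactly the classical obstruction -- so your claim that ``the obstruction is absent in these sizes'' would wrongly triangularize this case and lose form (2) altogether. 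The paper handles it with Lemma \ref{lem3}, which classifies the non-triangularizable $3\times 3$ nilpotent blocks and in particular shows the two linear forms involved can be pushed into $K[x_4,\ldots,x_n]$.

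Conversely, case (5) of Theorem \ref{rk3} (the rotation-type form) is not a source of any exceptional normal form, contrary to your plan: a map equivalent to (5) has four $K$-linearly independent components, whereas form (2) has only three nonzero components, so (5) cannot produce (2). What actually happens in the paper is that nilpotency is incompatible with case (5) when the four components are independent over $K$ -- this is Lemma \ref{lem4}, a determinant-degree computation showing $\det(UN+U)$ would have to vanish -- and otherwise one reduces to a triangularizable situation. So your proposed derivation of the $5$-variable form from the rotation case would have nothing to work with, while the case that genuinely needs it goes untreated. The characteristic-$2$ source of form (3) you identify correctly, and you rightly flag it as delicate (the paper needs Lemmas \ref{lem1} and \ref{lem2} there), but as written the proposal has a genuine gap in the two cases above. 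A smaller point: tameness of the non-triangular form (2) is not ``visibly'' a product of triangular maps; the paper gets it by replacing $\tilde{H}_2$ by $0$, invoking the permutation-triangularity criterion, and then using Proposition \ref{tameZ} over $\Z$ to descend to positive characteristic.
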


\begin{lemma} \label{lem3}
Let $H \in K[x]^3$ be homogeneous of degree $2$, such that
$\jac_{x_1,x_2,x_3} H$ is nilpotent. 
If $\jac_{x_1,x_2,x_3} H$ is not similar over $K$ to a triangular matrix, 
then $\jac_{x_1,x_2,x_3} H$ is similar over $K$ to a matrix of the form
$$
\left( \begin{array}{ccc}
0 & f & 0 \\ b & 0 & f \\ 0 & -b & 0        
\end{array} \right)
$$
where $f$ and $b$ are independent linear forms in $K[x_4,x_5,\ldots,x_n]$.
\end{lemma}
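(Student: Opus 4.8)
The plan is to start from the hypothesis that $M := \jac_{x_1,x_2,x_3} H$ is nilpotent but not similar over $K$ to a triangular matrix, and exploit that $M$ has entries that are linear forms in $K[x_1,\ldots,x_n]$. Since $x+H$ with $H$ quadratic homogeneous gives $\jac H$ nilpotent exactly when $x+H$ is Keller, but here the ambient relevant structure is just that $M \in \Mat_3(K[x])$ is nilpotent with linear entries. First I would record the consequences of nilpotency: $\tr M = 0$, $\tr M^2 = 0$, and $\det M = 0$ identically in $x$. Because $M$ is $3\times 3$, nilpotency is equivalent to $\operatorname{tr} M = \operatorname{tr}(M^2) = \det M = 0$ (the characteristic polynomial is $\lambda^3$). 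I would also note $\rk M \le 2$ everywhere (as $\det M \equiv 0$), and that the generic rank is exactly $2$, since rank $\le 1$ would force $M$ to be similar over $K(x)$ — and in fact over $K$, by an argument of the type used in earlier lemmas — to a matrix with a single nonzero row, hence triangularizable, contrary to hypothesis. So $\rk M = 2$ generically.

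Next I would use the symmetry coming from $M$ being a Jacobian of a quadratic map: each row $\jac_{x_1,x_2,x_3} H_i$ is the gradient of a quadratic form in $x_1,x_2,x_3$ (with coefficients linear in $x_4,\ldots,x_n$), so $M = M_0 + \sum_{k\ge 1} x_k N_k$ where $N_k$ is (up to the $\frac12$ convention) the Hessian block — this is the key extra rigidity beyond "generic rank $2$ nilpotent". Actually the cleaner route: decompose $M$ with respect to the variables $x_1,x_2,x_3$, writing $M = x_1 P_1 + x_2 P_2 + x_3 P_3 + M'$ where $P_1,P_2,P_3 \in \Mat_3(K)$ and $M'$ has entries in $K[x_4,\ldots,x_n]$; nilpotency at all specializations of $x_1,x_2,x_3$ forces each $P_j$ and $M'$ to satisfy compatible trace/rank conditions, and in particular the column space of $M$, which is $2$-dimensional over $K(x)$, is generated by a controllable set of vectors. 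The aim is to bring $M$ into the displayed normal form by a single conjugation $T \in \GL_3(K)$ acting on rows/columns $1,2,3$ (equivalently, replacing $H$ by $T^{-1}H(Tx)$ restricted to the first three outputs), which is exactly the allowed move.

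The heart of the argument: I would show that since $M$ is not triangularizable, the column space of $M$ over $K(x)$ (a $2$-dimensional space, because $\rk M = 2$) cannot contain a nonzero constant vector, and dually its left kernel (also $1$-dimensional) cannot be spanned by a constant covector — these are the two ways triangularization over $K$ would begin. From "no constant vector in the column space" together with the Jacobian/Hessian structure, I would deduce that $M$ must, after conjugation by $T \in \GL_3(K)$, take the specific skew-looking shape
$$
\left( \begin{array}{ccc}
0 & f & 0 \\ b & 0 & f \\ 0 & -b & 0
\end{array} \right),
$$
identifying $f$ and $b$ as the linear forms that appear. Their independence over $K$ follows because if $f$ and $b$ were proportional (or one were zero) the matrix would again be triangularizable or have rank $\le 1$. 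I would also check $f, b \in K[x_4,\ldots,x_n]$: any $x_1,x_2,x_3$ component in an off-diagonal entry would, via the trace-of-square and determinant conditions, force unwanted diagonal or rank-$1$ terms, contradicting nilpotency or non-triangularizability.

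The main obstacle I anticipate is the case analysis needed to rule out all other $3\times 3$ nilpotent shapes with linear entries that are genuinely non-triangularizable: the nilpotency conditions $\tr M = \tr M^2 = \det M = 0$ are a system of polynomial identities in the coefficients, and one must argue that the only solution family with generic rank $2$, not triangularizable over $K$, and consistent with the Hessian symmetry of a quadratic map, is the stated one. I expect this to reduce to a short classification of $2$-dimensional subspaces of $K(x)^3$ invariant under the relevant structure — essentially showing the "column space" and "row space" of $M$ pair up in the antisymmetric pattern above — and the Hessian symmetry is what kills the remaining degrees of freedom. Once the shape is fixed, reading off $f$ and $b$ and verifying independence is routine.
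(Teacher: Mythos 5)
Your write-up is a plan rather than a proof, and the plan defers exactly the step that constitutes the lemma. The hard content here is the classification statement: a nilpotent $3\times 3$ Jacobian of a quadratic map that is not similar over $K$ to a triangular matrix must, after conjugation over $K$, have precisely the displayed antisymmetric-looking shape. In the paper this is not re-derived from the conditions $\tr M=\tr M^2=\det M=0$; it is obtained by choosing an $i$ with nonzero coefficient matrix of $x_i$, substituting $x_i\mapsto x_i+1$ so that the constant part $N(0)$ becomes nonzero, and then invoking the classification of \cite[Lemma 3.1]{1601.00579} for the resulting matrix $N$ (cases (iii)/(iv) there), from which the form with $f+1$ and $b$ is read off and the independence of $f$ and $b$ comes for free from the coefficients of $x_i$. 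Your proposal acknowledges this as ``the main obstacle I anticipate'' and says you ``expect this to reduce to a short classification,'' but no such classification is carried out, and intermediate assertions you lean on are themselves unproved and not obviously correct as stated: in particular, non-triangularizability over $K$ is not the same as ``no nonzero constant vector in the column space and no constant covector in the left kernel'' (one needs an inductive argument through the smaller blocks, e.g.\ via \cite[Theorem 3.2]{1601.00579} or \cite[Corollary 1.4]{1601.00579}, to turn such a vector into a triangularization), so even granting that reduction the normal form does not follow from what you wrote.

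The second gap is the claim that $f,b\in K[x_4,\ldots,x_n]$. You dismiss this with ``any $x_1,x_2,x_3$ component in an off-diagonal entry would force unwanted diagonal or rank-$1$ terms,'' but that is not an argument. The paper proves it by isolating the quadratic part $\bar H$ of $H$ with respect to $x_1,x_2,x_3$, noting $\jac_{x_1,x_2,x_3}\bar H$ is nilpotent, triangularizing it by \cite[Theorem 3.2]{1601.00579} if $\bar H\ne 0$, deducing that some coefficient matrix of $x_1$ or $x_2$ of $\jac_{x_1,x_2,x_3}H$ has rank $1$, and then concluding from \cite[Lemma 3.1]{1601.00579} that $\jac_{x_1,x_2,x_3}H$ would be triangularizable over $K$, a contradiction. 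Neither this argument nor a substitute for it appears in your proposal, so as it stands the proof is incomplete at both of its essential points.
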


\begin{proof}
Suppose that $\jac_{x_1,x_2,x_3} H$ is not similar over $K$ to a triangular matrix.
Take $i$ such that the coefficient matrix of $x_i$ of $\jac_{x_1,x_2,x_3} H$ 
is nonzero, and define 
$$
N := \jac_{x_1,x_2,x_3} (H|_{x_i=x_i+1}) = (\jac_{x_1,x_2,x_3} H)|_{x_i=x_i+1}
$$
Then $N$ is nilpotent, and $N$ is not similar over $K$ to a triangular matrix. 
$N(0)$ is similar over $K$ to the matrix $N(0)$ in either (iii) or (iv) 
of \cite[Lemma 3.1]{1601.00579}, so it follows from
\cite[Lemma 3.1]{1601.00579} that $N$ is similar over $K$ to a matrix of the form 
$$
\left( \begin{array}{ccc}
0 & f+1 & 0 \\ b & 0 & f+1 \\ 0 & -b & 0        
\end{array} \right)
$$
where $b$ and $f$ are linear forms. $b$ and $f$ are independent, because the
coefficients of $x_i$ in $b$ and $f$ are $0$ and $1$ respectively. So
$\jac_{x_1,x_2,x_3} H$ is similar over $K$ to a matrix of the form
$$
\left( \begin{array}{ccc}
0 & f & 0 \\ b & 0 & f \\ 0 & -b & 0        
\end{array} \right)
$$
where $b$ and $f$ are independent linear forms.

Let $\bar{H}$ be the quadratic part with respect to $x_1, x_2, x_3$ of
$H$. Then $\jac_{x_1,x_2,x_3} \bar{H}$ is nilpotent.
By showing that $\bar{H} = 0$, we prove that $b$ and $f$ are contained in 
$K[x_4,x_5,\ldots,x_n]$. 

So assume that $\bar{H} \ne 0$. From \cite[Theorem 3.2]{1601.00579}, it follows
that we may assume that $\jac_{x_1,x_2,x_3} \bar{H}$ is lower triangular. If
the coefficient matrix of $x_2$ of $\jac_{x_1,x_2,x_3} \bar{H}$ is nonzero,
then it has rank $1$ because only the last row is nonzero, and so has the 
coefficient matrix of $x_2$ of 
$\jac_{x_1,x_2,x_3} H$. Otherwise, the coefficient matrix of $x_1$ of 
$\jac_{x_1,x_2,x_3} \bar{H}$ and $\jac_{x_1,x_2,x_3} H$ has rank $1$,
because only the first column is nonzero.

So we could have chosen $i$, such that the coefficient matrix of $x_i$ of 
$\jac_{x_1,x_2,x_3} H$ would have rank $1$. From \cite[Lemma 3.1]{1601.00579},
it follows that $\jac_{x_1,x_2,x_3} H$ is similar over $K$ to a triangular 
matrix. Contradiction, so $\bar{H} = 0$ indeed.
\end{proof}

\begin{lemma} \label{lem1}
Suppose that $H \in K[x]^n$, such that $\jac H$ is nilpotent.
\begin{enumerate}[\upshape (i)]

\item Suppose that $\jac H$ may only be nonzero in the first row and the 
first $2$ columns. Then there exists a $T \in \GL_n(K)$ such that for 
$\tilde{H} := T^{-1} H(Tx)$, the following holds.
\begin{enumerate}[\upshape (a)]

\item $\jac \tilde{H}$ may only be nonzero in the first row and the 
first $2$ columns (just like $\jac H$).

\item The Hessian matrix of the leading part with
respect to $x_2,x_3,\ldots,x_n$ of $\tilde{H}_1$ is the product of
a symmetric permutation matrix and a diagonal matrix. 

\item Every principal minor determinant of the leading principal minor 
matrix of size $2$ of $\jac \tilde{H}$ is zero.

\end{enumerate}

\item Suppose that $\chr K = 2$ and that $\jac H$ may only be nonzero 
in the first row and the first $3$ columns. Then there exists a 
$T \in \GL_n(K)$ such that for $\tilde{H} := T^{-1} H(Tx)$, the 
following holds.
\begin{enumerate}[\upshape (a)]

\item $\jac \tilde{H}$ may only be nonzero in the first row and the 
first $3$ columns (just like $\jac H$).

\item The Hessian matrix of the leading part with
respect to $x_2,x_3,\ldots,x_n$ of $\tilde{H}_1$ is the product of
a symmetric permutation matrix and a diagonal matrix. 

\item Every principal minor determinant of the leading principal minor 
matrix of size $3$ of $\jac \tilde{H}$ is zero.

\end{enumerate}
\end{enumerate}
\end{lemma}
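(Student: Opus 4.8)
The plan is to isolate the sparsity property of $\jac H$ that must be kept, and then to realize (b) and (c) by a sequence of admissible conjugations. Put $s = 2$ in case (i) and $s = 3$ in case (ii). The hypothesis on $\jac H$ says exactly that $H_i \in K[x_1,\ldots,x_s]$ for every $i \ge 2$; call this $(\ast)$, which is what (a) demands be preserved. I would first record that $(\ast)$ survives $\tilde{H} := T^{-1}H(Tx)$ whenever $T \in \GL_n(K)$ preserves both $\langle e_1\rangle$ and $\langle e_{s+1},\ldots,e_n\rangle$: then $T$ and $T^{-1}$ have first column along $e_1$ and vanishing top-right $s \times (n-s)$ block, so no $x_j$ with $j > s$ enters $(Tx)_i$ for $i \le s$, and no $H_1$ is added to another component. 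Such $T$ still form a sizeable group --- it contains all lower-triangular changes of $x_2,\ldots,x_n$ with ones on the diagonal (which fix $x_2,\ldots,x_s$), all substitutions $x_1 \mapsto c x_1 + \lambda x_2$, scalings of $x_2$, and the full linear group on $x_{s+1},\ldots,x_n$ --- and this is the only freedom I will use.

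For (b): in the quadratic homogeneous setting in which the lemma is applied, the leading part of $H_1$ with respect to $x_2,\ldots,x_n$ is homogeneous of degree at most $2$ in those variables, so its $\hess_{x_2,\ldots,x_n}$ is a symmetric matrix $M$ over $K$, which in characteristic $2$ is moreover alternating with zeroes on the diagonal (the $(i,i)$-entry of a Hessian is twice the coefficient of $x_i^2$). Proposition \ref{evenrk} then supplies a lower-triangular $T' \in \GL_{n-1}(K)$ with ones on the diagonal for which $(T')\tp M T'$ is the product of a symmetric permutation matrix and a diagonal matrix; conjugating $H$ by $\operatorname{diag}(1,T')$ realizes (b) and, by the first paragraph, keeps (a). For case (ii) I would also keep the even-rank half of Proposition \ref{evenrk} in mind: over characteristic $2$ that normal form has only $2$-cycles and fixed points carrying the diagonal value $0$, which limits where it can later be damaged.

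For (c): after (b), the diagonal of the leading $s \times s$ block of $\jac\tilde{H}$ is $\partial_{x_1}\tilde{H}_1,\partial_{x_2}\tilde{H}_2,\ldots,\partial_{x_s}\tilde{H}_s$, and nilpotency (which gives $\tr \jac\tilde{H} = 0$, while $\partial_{x_i}\tilde{H}_i = 0$ for $i > s$ by $(\ast)$) forces $\partial_{x_1}\tilde{H}_1 = -\sum_{i=2}^{s}\partial_{x_i}\tilde{H}_i$, so this diagonal lies in $K[x_1,\ldots,x_s]$ and is governed by $H_2,\ldots,H_s$ alone. I would then trim to the essential data by passing to $\jac\tilde{H}|_{x_{s+1}=\cdots=x_n=0}$ (nilpotency is preserved), clear the diagonal of the leading block by a substitution $x_1 \mapsto x_1 + \lambda x_2$ --- for $s = 2$ with $\lambda$ chosen against the $x_1^2$-coefficient of a suitable $H_i$ --- possibly followed by a row operation among $H_2,\ldots,H_s$, and then deduce the vanishing of the remaining principal minors (automatic for the full $s \times s$ determinant, but a genuine extra condition on the intermediate $2\times 2$ minors when $s = 3$) from the classification of small nilpotent Jacobians in \cite{1601.00579}; for $s = 3$ this needs a further argument --- via Lemma \ref{lem3} and the Jacobian structure --- that the leading block can be conjugated to strictly triangular form, so that all its principal minors vanish.

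The step I expect to be the genuine obstacle is the compatibility of (c) with (b): the conjugations that clear the leading block act on $x_2$ (and, for $s = 3$, on $x_3$), hence on the $(x_2,x_2)$-corner --- resp.\ the $\{x_2,x_3\}\times\{x_2,x_3\}$-corner --- of the Hessian normal form secured in (b). I would keep this under control by confining those conjugations to upper-triangular changes of the special variables $x_1,\ldots,x_s$, which carry ``symmetric permutation $\times$ diagonal'' to ``symmetric permutation $\times$ diagonal'' (up to re-indexing and rescaling the diagonal part), and in the residual characteristic-$2$ case of (ii) by using the observation from step (b) that those corner entries of the normal form are already $0$, so that the squares such a change introduces cannot break the alternating pattern. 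Once this bookkeeping is set up, a single $T$ realizes (a), (b) and (c) at once.
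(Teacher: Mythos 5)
Your handling of (a) and (b) coincides with the paper's: the admissible conjugations are those fixing $\langle e_1\rangle$ and $\langle e_{s+1},\ldots,e_n\rangle$, and Proposition \ref{evenrk} applied via $T=\operatorname{diag}(1,T')$ with $T'$ lower unitriangular gives (b) while keeping (a). The gap is in (c), which is the actual content of the lemma. The principal minors of the leading $s\times s$ block $N$ involve the outer variables through the first row (e.g.\ $\partial\tilde H_1/\partial x_2$ may contain an $x_i$ with $i>s$), and before any normalization $N$ need not even be nilpotent: for $s=2$, nilpotency of the full matrix only gives $N_{11}+N_{22}=0$ and $\det N = -(\partial\tilde H_1/\partial x_i)(\partial\tilde H_i/\partial x_1)$ summed over $i>2$, which can be nonzero (this is exactly the situation of Lemma \ref{lem2}). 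So passing to $\jac\tilde H|_{x_{s+1}=\cdots=x_n=0}$ discards precisely the terms that enter the minors of $N$, and nothing you prove about the restricted matrix yields (c). The paper's proof lives in this discarded data: in the hard subcases (where $\partial\tilde H_1/\partial x_j\notin K[x_1,\ldots,x_s]$) it uses the form secured in (b) — each such entry is a combination of $x_1$ and a ``private'' variable $x_i$ occurring nowhere else — and extracts coefficients of $x_i$ (and, in case (ii), of $x_3^2$ and $x_3^3$) from the vanishing sums of principal minors of the full $\jac\tilde H$ to force specific entries of $N$ to be zero, with no further conjugation at all.

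Two further steps of your sketch do not work as stated. First, ``the leading block can be conjugated to strictly triangular form'' does not give (c): similarity to a strictly triangular matrix only kills the sums of principal minors of each size, not each individual minor, and actually performing such a conjugation (mixing $x_1,\ldots,x_s$ and the components $\tilde H_1,\ldots,\tilde H_s$) is exactly what threatens (a) and (b). Your compatibility claim — that upper-triangular changes of $x_1,\ldots,x_s$ carry ``symmetric permutation times diagonal'' to the same shape — is false: congruating $\operatorname{diag}(1,1)$ by $\left(\begin{smallmatrix}1&1\\0&1\end{smallmatrix}\right)$ gives $\left(\begin{smallmatrix}1&1\\1&2\end{smallmatrix}\right)$, and replacing $\tilde H_1$ by $\tilde H_1-\lambda\tilde H_2$ adds the arbitrary quadratic-in-$x_2,\ldots,x_s$ part of $\tilde H_2$ to the relevant corner of the Hessian; your characteristic-$2$ patch covers only pairings inside $\{x_2,\ldots,x_s\}$, whereas the hard case is precisely when $x_2,x_3$ are paired with outer variables. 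Second, the appeal to Lemma \ref{lem3} for $s=3$ points the wrong way: its non-triangularizable normal form has principal $2\times2$ minor $\pm fb\neq 0$, so what (c) requires is to \emph{exclude} such behaviour of $N$ under the sparsity hypothesis; the paper does this through the coefficient analysis above together with the characteristic-$2$ fact that a triangular nilpotent Jacobian of size $3$ (resp.\ $2$) cannot have rank $2$ (resp.\ $1$), none of which your proposal supplies.
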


\begin{proof}
From proposition \ref{evenrk}, it follows that there exists a
$T \in \Mat_{n,n}(K)$ of the form
$$
\left( \begin{array} {ccccc}
1 & 0 & 0 & \cdots & 0 \\
0 & 1 & 0 & \cdots & 0 \\
0 & * & 1 & \ddots & \vdots \\
\vdots & \vdots & \ddots & \ddots & 0 \\
0 & * & \cdots & * & 1
 \end{array} \right)
$$
such that the Hessian matrix of the leading part with
respect to $x_2,x_3,\ldots,x_n$ of $\tilde{H}_1 = H_1(Tx)$ is the 
product of a symmetric permutation matrix and a diagonal matrix. 

Furthermore, $\jac \tilde{H}$ may only be nonzero in the first row 
and the first $2$ or $3$ columns respectively (just like $\jac H$),
because of the form of $T$.
\begin{enumerate}[\upshape (i)]

\item 
Let $N$ be the principal minor matrix of size $2$ of $\jac \tilde{H}$.

Suppose first that $\jac \tilde{H}$ may only be nonzero in the first $2$ 
columns. From \cite[Lemma 1.2]{1601.00579}, it follows that $N$ is nilpotent.
On account of \cite[Theorem 3.2]{1601.00579}, $N$ is similar over $K$
to a triangular matrix. 

Hence the rows of $N$ are dependent over $K$.
If the second row of $N$ is zero, then (i) follows. If the second row of 
$N$ is not zero, then we may assume that the first row of $N$ is zero, and (i)
follows as well.

Suppose next that $\jac \tilde{H}$  may only be nonzero in the first row and 
the first $2$ columns, but not just the first $2$ columns. We distinguish two 
cases.
\begin{compactitem}

\item $\parder{}{x_2} \tilde{H}_1 \in K[x_1,x_2]$.

Since $\tr \jac \tilde{H} = 0$, we deduce that $\parder{}{x_1} \tilde{H}_1 \in 
K[x_1,x_2]$ as well. Let $G := \tilde{H}(x_1,x_2,0,0,\ldots,0)$. Then 
$\jac G = (\jac \tilde{H})|_{x_3=x_4=\cdots=x_n=0}$. Consequently,
the nonzero part of $\jac G$ is restricted to the first two columns.

So the leading principal minor matrix of size $2$ of $\jac G$ 
is nilpotent. But this minor matrix is just $N$, and just as for 
$\tilde{H}$ before, we may assume that only one row of $N$ is nonzero. 
This gives (i).

\item $\parder{}{x_2} \tilde{H}_1 \notin K[x_1,x_2]$.

Since $\hess (\tilde{H}_1|_{x_1=0})$ is the product of a permutation matrix and a diagonal matrix,
it follows that $\parder{}{x_2} \tilde{H}_1$ is a linear combination of $x_1$ and $x_i$,
where $i \ge 3$, such that $x_i$ does not occur in any other entry of $\jac \tilde{H}$.
Looking at the coefficient of $x_i^1$ in the sum of the determinants of the
principal minors of size $2$, we infer that $\parder{}{x_1} \tilde{H}_2 = 0$.

So the second row of $\jac \tilde{H}$ is dependent on $e_2\tp$. From a permuted version of 
\cite[Lemma 1.2]{1601.00579}, we infer that $\parder{}{x_2} \tilde{H}_2$ is nilpotent.
Hence the second row of $\jac \tilde{H}$ is zero. Since $\tr \jac \tilde{H} = 0$, we infer (i).

\end{compactitem}

\item 
Let $N$ be the principal minor matrix of size $3$ of $\jac \tilde{H}$.

Suppose first that $\jac \tilde{H}$ may only be nonzero in the first $3$ columns.
From \cite[Lemma 1.2]{1601.00579}, it follows that the leading principal
minor of size $3$ of $\jac \tilde{H}$ is nilpotent. On account of 
\cite[Theorem 3.2]{1601.00579}, $N$ is similar over $K$ to a triangular matrix. 
But for a triangular nilpotent Jacobian matrix of size $3$ over characteristic
$2$, the rank cannot be $2$. So $\rk N \le 1$.

Hence the rows of $N$ are dependent over $K$ in pairs.
If the second and the third row of $N$ are zero, then (ii) follows. 
If the second or the third row of $N$ is not zero, then we may assume that 
the first $2$ rows of $N$ are zero, and (ii) follows as well.

Suppose next that $\jac \tilde{H}$  may only be nonzero in the first row 
and the first $3$ columns, but not just the first $3$ columns. We distinguish 
three cases.
\begin{compactitem}

\item $\parder{}{x_2} \tilde{H}_1, \parder{}{x_3} \tilde{H}_1 \in K[x_1,x_2,x_3]$.

Using techniques of the proof of (i), we can reduce to the case where
$\jac \tilde{H}$ may only be nonzero in the first $3$ columns.

\item $\parder{}{x_2} \tilde{H}_1, \parder{}{x_3} \tilde{H}_1 \notin K[x_1,x_2,x_3]$.

Using techniques of the proof of (i), we can deduce that 
$\parder{}{x_1} \tilde{H}_2 = \parder{}{x_1} \tilde{H}_3 = 0$, 
and that $\jac_{x_2,x_3} (\tilde{H}_2,\tilde{H}_3)$ is nilpotent.

On account of \cite[Theorem 3.2]{1601.00579}, 
$\jac_{x_2,x_3} (\tilde{H}_2,\tilde{H}_3)$ is similar over $K$ 
to a triangular matrix. But for a triangular nilpotent Jacobian matrix 
of size $2$ over characteristic $2$, the rank cannot be $1$. 
So $\rk \jac_{x_2,x_3} (\tilde{H}_2,\tilde{H}_3) = 0$. 
Consequently, the last two rows of $N$ are zero, and (ii) follows.

\item None of the above.

Assume without loss of generality that $\parder{}{x_2} \tilde{H}_1 \in 
K[x_1,x_2,x_3]$ and $\parder{}{x_3} \tilde{H}_1 \notin K[x_1,x_2,x_3]$. 
Since $\hess (\tilde{H}_1|_{x_1=0})$ is the product of a permutation matrix and a diagonal 
matrix, it follows that $\parder{}{x_3} \tilde{H}_1$ is a linear combination of 
$x_1$ and $x_i$, where $i \ge 4$, such that $x_i$ does not occur in any other 
entry of $\jac \tilde{H}$.

Looking at the coefficient of $x_i^1$ in the sum of the determinants of the
principal minors of size $2$, we infer that $\parder{}{x_1} \tilde{H}_3 = 0$. If
$\parder{}{x_1} \tilde{H}_2 = 0$ as well, then we can advance as above, so assume that
$\parder{}{x_1} \tilde{H}_2 \neq 0$. 

Looking at the coefficient of $x_i^1$ in the 
sum of the determinants of the principal minors of size $3$, we infer that 
$\parder{}{x_2} \tilde{H}_3 = 0$. So the third row of $\jac \tilde{H}$ is dependent on 
$e_3\tp$. From a permuted version of \cite[Lemma 1.2]{1601.00579}, we infer that 
$\parder{}{x_3} \tilde{H}_3$ is nilpotent. Hence the third row of 
$\jac \tilde{H}$ is zero. 

From $\tr \jac \tilde{H} = 0$, we deduce that
$\parder{}{x_1} \tilde{H}_1 = -\parder{}{x_2} \tilde{H}_2$. We show that 
\begin{equation} \label{DN0}
\parder{}{x_1} \tilde{H}_1 = \parder{}{x_2} \tilde{H}_2 = 0
\end{equation}
For that purpose, suppose that $\parder{}{x_1} \tilde{H}_1 \neq 0$.
Since  $\parder{}{x_1} x_1^2 = 0$, the coefficient of $x_1$ in 
$\parder{}{x_1} \tilde{H}_1$ is zero. Similarly, the coefficient of $x_2$ in 
$\parder{}{x_2} \tilde{H}_2$ is zero. As $\tilde{H}_2 \in K[x_1,x_2,x_3]$, we
infer that
$$
\parder{}{x_1} \tilde{H}_1 = -\parder{}{x_2} \tilde{H}_2 \in K x_3 \setminus \{0\}
$$
Looking at the 
coefficient of $x_3^2$ in the sum of the determinants of the 
principal minors of size $2$, we deduce that the coefficient of $x_3^2$ in
$(\parder{}{x_i} \tilde{H}_1) \cdot (\parder{}{x_1} \tilde{H}_i)$ is nonzero.

Consequently, the coefficient of $x_3^3$ in 
$(\parder{}{x_i} \tilde{H}_1) \cdot (\parder{}{x_2} \tilde{H}_2) \cdot 
(\parder{}{x_1} \tilde{H}_i) \in K x_3^3 \setminus \{0\}$ is nonzero. 
This contributes to the coefficient of $x_3^3$ in the sum of the determinants 
of the principal minors of size $3$. Contradiction, because this contribution 
cannot be canceled.

So \eqref{DN0} is satisfied. We show that in addition,
\begin{equation} \label{N120}
\parder{}{x_2} \tilde{H}_1 = 0
\end{equation}
The coefficient of $x_1$ of $\parder{}{x_2} \tilde{H}_1$ is zero, because of \eqref{DN0}.
The coefficient of $x_2$ of $\parder{}{x_2} \tilde{H}_1$ is zero, because 
$\parder{}{x_2} x_2^2 = 0$.
The coefficient of $x_3$ of $\parder{}{x_2} \tilde{H}_1$ is zero, because
the coefficient of $x_2$ of $\parder{}{x_3} \tilde{H}_1 \in K x_1 + K x_i$ is zero.
So \eqref{N120} is satisfied as well.

Recall that the third row of $N$ is zero. 
From \eqref{DN0} and \eqref{N120}, it follows that the diagonal and the second column
of $N$ are zero as well. Hence every principal minor determinant of $N$ is zero, which gives (ii).
\qedhere

\end{compactitem}
\end{enumerate}
\end{proof}

\begin{lemma} \label{lem2}
Let $\tilde{H}$ be as in lemma \ref{lem1}. Suppose that $\jac \tilde{H}$
has a principal minor matrix $M$ of which the determinant is nonzero. Then 
\begin{enumerate}[\upshape (a)]

\item $\tilde{H}$ is as in {\upshape (ii)} of lemma \ref{lem1};

\item rows $2$ and $3$ of $\jac \tilde{H}$ are zero;

\item $M$ has size $2$ and $x_2 x_3 \mid \det M$;

\item Besides $M$, there exists exactly one principal minor matrix $M'$ 
of size $2$ of $\jac H$, such that $\det M' = - \det M$.

\end{enumerate}
\end{lemma}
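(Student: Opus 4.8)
The plan is to unpack lemma~\ref{lem1}, list the handful of principal minors of $\jac \tilde{H}$ that can be nonzero, and play the vanishing of the characteristic polynomial of $\jac \tilde{H}$ against the rigid shape of the leading part of $\tilde{H}_1$. First, by lemma~\ref{lem1} the matrix $\jac \tilde{H}$ vanishes outside its first row and first $s$ columns, with $s=2$ in case~(i) and $s=3$ in case~(ii); every principal minor determinant of the leading $s\times s$ block $R:=\jac_{x_1,\ldots,x_s}(\tilde{H}_1,\ldots,\tilde{H}_s)$ is $0$, so $R$ has zero diagonal, and since every other row of $\jac \tilde{H}$ is supported on columns $1,\ldots,s$ the whole diagonal of $\jac \tilde{H}$ is zero. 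Hence a nonzero principal minor matrix $M$ has size $\ge2$, and (as $\rk\jac \tilde{H}\le3$ in the situation of theorem~\ref{rk3np}) size $2$ or $3$; a cofactor expansion exploiting that all rows but the first are supported on columns $1,\ldots,s$ shows the index set of $M$ is $\{1,k\}$ with $k>s$, or $\{1,j,k\}$ with $1<j\le s<k$, and then $\det M=-(\jac \tilde{H})_{1k}(\jac \tilde{H})_{k1}$, resp.\ $(\jac \tilde{H})_{j1}(\jac \tilde{H})_{1k}(\jac \tilde{H})_{kj}$, all other principal minors vanishing. Finally, nilpotency of $\jac \tilde{H}$ means its characteristic polynomial is $\lambda^n$, so the sums $e_1,e_2,e_3$ of its $1\times1$, $2\times2$, $3\times3$ principal minors vanish; with the list this reads $e_1=0$, $\sum_{k>s}(\jac \tilde{H})_{1k}(\jac \tilde{H})_{k1}=0$, and $\sum_{j=2}^s(\jac \tilde{H})_{j1}\bigl(\sum_{k>s}(\jac \tilde{H})_{1k}(\jac \tilde{H})_{kj}\bigr)=0$.

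Next I would bring in the shape of $q:=\tilde{H}_1|_{x_1=0}$. Since $\hess_{x_2,\ldots,x_n}q$ is a symmetric permutation matrix times a diagonal matrix, $q$ is a sum of squares and of products $\beta x_ax_b$ over a matching of $\{2,\ldots,n\}$, so each $(\jac \tilde{H})_{1k}=\partial_{x_k}q$ is either $0$ or a scalar times a single variable $x_{b_k}$ ($b_k$ the partner of $k$); also $\tilde{H}_i\in K[x_1,\ldots,x_s]$ and each $(\jac \tilde{H})_{ij}$ is linear in $x_1,\ldots,x_s$ for $i>1$. The crux is a monomial argument: in a relation $\sum_{k>s}(\partial_{x_k}q)(\partial_{x_j}\tilde{H}_k)=0$ a summand with $b_k>s$ carries the variable $x_{b_k}$, which by disjointness of the matching appears in no other summand, so it vanishes; in particular $\partial_{x_1}\tilde{H}_k=0$ for every such $k$. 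In case~(i) every partner of a $k\ge3$ is either $>2$ or equals $2$; the summands of the first kind in $e_2=0$ vanish by the above, and the (at most one) summand with $b_k=2$, being $\beta x_2$ times a linear form in $x_1,x_2$, alone produces the monomials $x_1x_2$ and $x_2^2$, so it too vanishes. Thus all $2\times2$ principal minors vanish, whence all $3\times3$ ones do as well ($(\jac \tilde{H})_{21}$ is zero, or else the same monomial argument applied to $\sum_k(\partial_{x_k}q)(\partial_{x_2}\tilde{H}_k)=0$ makes it multiply only zeros). This contradicts $\det M\ne0$, so we must be in case~(ii); in particular $\chr K=2$, which is~(a).

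Now in case~(ii) we have $s=3$, $\chr K=2$, and since $\partial_{x_j}(x_j^2)=0$ the entry $(\jac \tilde{H})_{ij}$ has no $x_j$-component for $i>1$. The $3\times3$ nilpotent Jacobian $R$ is, by lemma~\ref{lem3}, similar over $K$ to a triangular matrix or to $\bigl(\begin{smallmatrix}0&f&0\\b&0&f\\0&-b&0\end{smallmatrix}\bigr)$ with $f,b$ independent linear forms in $K[x_4,\ldots,x_n]$; the latter is impossible, since then every entry of $R$ — in particular rows $2,3$, which lie in $K[x_1,x_2,x_3]$ — would lie in $K[x_4,\ldots,x_n]$, forcing rows $2,3$ to be zero and $\rk R\le1$, against $\rk R=2$. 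So $R$ is triangularizable, hence $\rk R\le1$ (a $3\times3$ triangular nilpotent Jacobian in characteristic $2$ cannot have rank $2$), which yields many relations $R_{ik}R_{jk}=0$ among the entries. I would then show a nonzero entry in row $2$ or row $3$ of $\jac \tilde{H}$ contradicts the existence of $M$: if $R_{21}\ne0$, the rank-one relations and the no-$x_j$ remark force $R_{12}=R_{13}=R_{31}=R_{32}=0$ and $R_{21}=a\,x_3$, $R_{23}=a\,x_1$ with $a\ne0$, so row $1$ of $R$ is zero and the matching of $q$ lies in $\{4,\ldots,n\}$; then $e_2=0$ (pairwise distinct heavy variables $x_{b_k}$) kills every $\{1,k\}$ minor, and $e_3=0$, which reduces to $R_{21}\sum_k(\jac \tilde{H})_{1k}(\jac \tilde{H})_{k2}=0$ with again distinct heavy variables, kills every $\{1,2,k\}$ and $\{1,3,k\}$ minor — so no principal minor is nonzero, a contradiction; the case $R_{31}\ne0$ is symmetric. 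Hence $R_{21}=R_{31}=0$, and then (diagonal zero plus the no-$x_j$ remark) $R_{22}=R_{23}=R_{32}=R_{33}=0$ too, i.e.\ rows $2$ and $3$ of $\jac \tilde{H}$ are zero — which is~(b).

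With (b) every $3\times3$ candidate carries the factor $(\jac \tilde{H})_{21}=0$ or $(\jac \tilde{H})_{31}=0$, so $M=\{1,k_0\}$ with $\det M=-(\partial_{x_{k_0}}q)(\partial_{x_1}\tilde{H}_{k_0})\ne0$. By the disjoint-matching argument $x_{k_0}$ is matched in $q$ to $x_2$ or $x_3$, say to $x_2$; as only the $k_0$-term contributes to the $x_2^2$-coefficient of $e_2=0$, we get $\partial_{x_1}\tilde{H}_{k_0}\in K x_3\setminus\{0\}$, so $\det M$ is a nonzero scalar multiple of $x_2x_3$, which is~(c). Finally the same analysis shows the only $\{1,k\}$ with nonzero determinant are those with $x_k$ matched to $x_2$ (and $\partial_{x_1}\tilde{H}_k\in K x_3$) or to $x_3$ (and $\partial_{x_1}\tilde{H}_k\in K x_2$); since $x_2$ and $x_3$ have at most one partner each, there are at most two such minors, each a nonzero scalar multiple of $x_2x_3$, and $e_2=0$ forces their determinants to cancel — equivalently, since $\chr K=2$, to coincide and be nonzero — so there are exactly two; hence besides $M$ there is exactly one principal minor matrix $M'$ of size $2$ with $\det M'=-\det M$, which is~(d). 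The main obstacle is the bookkeeping in the previous paragraph proving~(b): one must chase the rank-one structure of $R$ supplied by lemma~\ref{lem3} through all three nilpotency relations to exclude every nonzero entry in rows $2$ and $3$.
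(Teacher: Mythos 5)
Your overall strategy (vanishing sums of principal minors plus the matching structure of $\tilde{H}_1|_{x_1=0}$), your elimination of case (i) of lemma \ref{lem1}, and your derivations of (c) and (d) from (b) are sound and close in spirit to the paper's proof. However, your proof of (b) has a genuine gap. The step ``$R$ is triangularizable, hence $\rk R\le1$'' rests on the principle that a $3\times3$ triangular nilpotent Jacobian in characteristic $2$ cannot have rank $2$; that principle is only valid when all three components lie in $K[x_1,x_2,x_3]$ (this is the setting in which the paper uses it inside the proof of lemma \ref{lem1}: after triangularizing, the second component becomes a quadratic in $K[x_1]$, whose derivative vanishes in characteristic $2$). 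In the situation of lemma \ref{lem2}, $\tilde{H}_1$ may contain monomials $x_ax_b$ with $b>3$, and the principle fails: for $\tilde{H}=(x_2x_4,\,0,\,x_1x_2,\,0,\ldots,0)$ all requirements of (ii) of lemma \ref{lem1} are met (nilpotency, support in the first row and first three columns, the Hessian condition, all principal minors of the leading $3\times3$ block zero), yet the leading block $R=\left(\begin{smallmatrix}0&x_4&0\\0&0&0\\x_2&x_1&0\end{smallmatrix}\right)$ is similar over $K$ to a triangular matrix (reorder the basis as $e_3,e_1,e_2$) and has rank $2$. So ``triangularizable $\Rightarrow\rk R\le1$'' is false here, and since your subsequent bookkeeping (the rank-one relations forcing $R_{12}=R_{13}=R_{31}=R_{32}=0$, etc.) feeds on it, the proof of (b) --- and with it (c) and (d) --- does not go through as written. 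The point is that $\rk R\le1$ is not a consequence of lemma \ref{lem1} alone; it genuinely needs the hypothesis that a nonzero principal minor $M$ exists (in the example above no such $M$ exists and row $3$ is nonzero). The paper gets (b) directly from that hypothesis: a nonzero second (resp.\ third) row forces the coefficient of $x_1x_3$ in $\tilde{H}_2$ (resp.\ $x_1x_2$ in $\tilde{H}_3$) to be nonzero, the vanishing minor $N_{23}N_{32}=0$ then kills one of the two rows, and the existence of $M$ together with a second nonzero minor $M'$ (forced by the vanishing of the sum of minors of that size) gives $N_{12}\ne0$ and $N_{13}\ne0$, whence $N_{21}=N_{31}=0$ and both rows vanish.

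A secondary issue: you exclude principal minors of size $4$ by invoking $\rk\jac\tilde{H}\le3$ ``in the situation of theorem \ref{rk3np}''. That rank bound is not part of the hypotheses of lemma \ref{lem2}; the paper avoids it by noting that any principal minor containing both indices $2$ and $3$ has a zero row once one of rows $2$, $3$ is known to vanish. If you repair (b) along the paper's lines, this issue disappears as well.
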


\begin{proof}
Take $N$ as in (i) or (ii) of lemma \ref{lem1} respectively. 
Then $M$ is not a principal minor matrix of $N$. So if $M$ does not 
contain the upper left corner of $\jac \tilde{H}$, then the last column
of $M$ is zero. Hence $M$ does contain the upper left corner of 
$\jac \tilde{H}$. 

If $M$ has two columns outside the column range of $N$,
then both columns are dependent on $e_1$. So $M$ has exactly one column
outside the column range of $N$, say column $i$. 
\begin{enumerate}[(i)]

\item Suppose first that $\tilde{H}$ is as in (i) of lemma \ref{lem1}.
Then either $M$ has size $2$ with row and column indices $1$ and $i$,
or $M$ has size $3$ with row and column indices $1$, $2$ and $i$.

The coefficient of $x_1$ in the upper right corner of $M$ is zero, because
$N_{11} = 0$. Hence the upper right corner 
of $M$ is of the form $c x_j$ for some nonzero $c \in K$ and a $j \ge 2$.
If $j \ge 3$, then $\det M$ is equal to the sum of the terms which are 
divisible by $x_j$ in the sum of the determinants of the principal minors of 
the same size as $M$, which is zero. 

So $j = 2$. Now $M$ is the only principal minor matrix of its size, of which
the determinant is nonzero, because if we would take another $i$, then $j$ 
changes as well. Contradiction, because the sum of the determinants of the 
principal minors of the same size as $M$ is zero. 

\item Suppose next that $\tilde{H}$ is as in (ii) of lemma \ref{lem1}.
If the second row of $\jac\tilde{H}$ is nonzero, then the coefficient of
$x_1 x_3$ in $\tilde{H}_2$ is nonzero, because $N_{22} = 0$. 
If the third row of $\jac\tilde{H}$ 
is nonzero, then the coefficient of $x_1 x_2$ in $\tilde{H}_3$ is nonzero,
because $N_{33} = 0$.
Since every principal minor determinant of $N$ is zero, we infer that 
$N_{23} N_{32} = 0$, so either the second or the third row of $\jac\tilde{H}$ is zero.

Assume without loss of generality that the second row of $\jac \tilde{H}$
is zero. Then either $M$ has size $2$ with row and column indices $1$ and $i$,
or $M$ has size $3$ with row and column indices $1$, $3$ and $i$.
The upper right corner of $M$ is of the form $c x_j$ for some nonzero 
$c \in K$, and with the techniques in (i) above, we see that $2 \le j \le 3$.

Furthermore, we infer with the techniques in (i) above that $\jac \tilde{H}$ 
has another principal minor matrix $M'$ of the same size as $M'$, of which the
determinant is nonzero as well. The upper right corner of $M'$ can only be
of the form $c' x_{5-j}$ for some nonzero $c' \in K$.

It follows that $N_{12} \ne 0$ and $N_{13} \ne 0$. Consequently, $N_{21} = N_{31} = 0$.
This is only possible if both the second and the third
row of $\jac \tilde{H}$ are zero. So $M$ has size $2$, and claims (c) and (d)
follow. \qedhere

\end{enumerate}
\end{proof}

\begin{lemma} \label{lem4}
Let $n = 4$, 
$$
\tilde{H} = \left( \begin{array}{c}
x_1 x_3 + c x_2 x_4 \\ x_2 x_3 - x_1 x_4 \\
\frac12 x_3^2 + \frac{c}2 x_4^2 \\ \frac12 x_1^2 + \frac{c}2 x_2^2 
\end{array} \right)
\qquad \mbox{and} \qquad
\jac \tilde{H} = \left( \begin{array}{cccc}
x_3 & c x_4 & x_1 & c x_2 \\
- x_4 & x_3 & x_2 & -x_1 \\
0 & 0 & x_3 & c x_4 \\
x_1 & c x_2 & 0 & 0
\end{array} \right)
$$
as in lemma \ref{rk3calc} (with $c \neq 0$). 

Let $M \in \Mat_{4,4}(K)$, such that $\deg \det \big(\jac{\tilde{H}} + M\big) \le 2$. 
Then there exists a translation $G$, such that 
$$
\tilde{H}\big(G(x)\big) - \big(\tilde{H} + Mx\big) \in K^4
$$
In particular, $\det \big(\jac{\tilde{H}} + M\big) = \det \jac \big(\tilde{H} + Mx\big) = 0$.
\end{lemma}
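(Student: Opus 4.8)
The plan is to exploit the very explicit form of $\tilde H$ and $\jac\tilde H$ in terms of the pair of "complex-like" coordinates $(x_1,x_2)$ and $(x_3,x_4)$, where multiplication is the form $u\cdot v$ with $(u_1,u_2)(v_1,v_2)=(u_1v_1-cu_2v_2,\,u_1v_2+u_2v_1)$. Under this viewpoint, with $z=(x_1,x_2)$ and $w=(x_3,x_4)$, one has $\tilde H=(zw,\tfrac12 w^2,\tfrac12 z^2)$ up to the pairing, and $\det\jac\tilde H$ is (a constant times) a determinant that one checks equals $0$ identically — this is already guaranteed by the last claim of theorem \ref{rk3}, case (5), so I may take $\det\jac\tilde H=0$ for granted. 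The point of the lemma is the much stronger statement that perturbing $\jac\tilde H$ by a \emph{constant} matrix $M$ keeps the determinant $\equiv 0$ whenever it merely keeps the degree $\le 2$ (rather than $\le 4$, which it would be generically), and moreover that the perturbed map $\tilde H+Mx$ is obtained from $\tilde H$ by a translation up to an additive constant.

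First I would write $Mx$ as the most general linear map $L^4\to L^4$ and ask: which translations $G(x)=x+a$, $a\in K^4$, produce $\tilde H(x+a)-\tilde H(x)$ of the form $Mx+$const? Because $\tilde H$ is quadratic homogeneous, $\tilde H(x+a)-\tilde H(x)=(\jac\tilde H)|_{x=a}\cdot x+\tilde H(a)$, so $\tilde H(x+a)-\tilde H(x)-\tilde H(a)=(\jac\tilde H)(a)\cdot x$ is \emph{linear} with matrix exactly $(\jac\tilde H)|_{x=a}$, a constant matrix obtained by substituting the numerical vector $a$ into the linear entries of $\jac\tilde H$. So the translations realize precisely the $4$-dimensional family of constant matrices
$$
M_a=\left(\begin{array}{cccc}
a_3 & c a_4 & a_1 & c a_2\\ -a_4 & a_3 & a_2 & -a_1\\ 0 & 0 & a_3 & c a_4\\ a_1 & c a_2 & 0 & 0
\end{array}\right),\qquad a\in K^4 .
$$
The claim to prove is then: if $M\in\Mat_{4,4}(K)$ satisfies $\deg\det(\jac\tilde H+M)\le 2$, then $M=M_a$ for some $a\in K^4$; the conclusion $\det(\jac\tilde H+M)=0$ follows because then $\jac\tilde H+M=(\jac\tilde H)|_{x\mapsto x+a}$ and this is still the Jacobian of a map conjugate-to-itself-by-translation, whose rank is still $\le 3$, hence its $4\times 4$ determinant vanishes; and the displayed identity $\tilde H(x+a)-(\tilde H+Mx)=-\tilde H(a)\in K^4$ is then immediate.

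The core of the argument is therefore the linear-algebra statement "$\deg\det(\jac\tilde H+M)\le 2\Rightarrow M\in\{M_a\}$". I would expand $P(x):=\det(\jac\tilde H+Mx\text{ replaced by }M)$ — i.e. substitute the $16$ unknown entries $M_{ij}$ — and collect the coefficients of the $\binom{4+3}{3}-\binom{4+1}{1}$ monomials of degree $3$ and $4$ in $x_1,\dots,x_4$; each such coefficient is a polynomial in the $M_{ij}$ that must vanish. The degree-$4$ coefficients recover that $\det\jac\tilde H\equiv 0$ (no constraint on $M$), and the degree-$3$ coefficients give the constraints. Using the block structure (rows $3$ and $4$ of $\jac\tilde H$ each have only two nonzero, highly structured entries), expansion along row $4$ and then row $3$ makes this tractable: each cofactor is a $2\times2$ determinant of linear forms, so each term is already degree $\le 2$ except when an entry of $M$ lands in rows $3$ or $4$. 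I expect the main obstacle — and the only place where genuine care is needed — to be the bookkeeping of exactly which degree-$3$ monomial coefficients to read off (one wants a \emph{minimal} set that already pins down all $16$ parameters down to the $4$-dimensional family), and handling the characteristic: since here $\chr K\neq 2$, the $\frac12$'s cause no trouble, but one should phrase the translation $G$ and the identity $\tilde H(G(x))-(\tilde H+Mx)=-\tilde H(a)$ so that it is manifestly an equality in $K^4$ with no hidden division. Once $M=M_a$ is forced, taking $G(x)=x+a$ finishes everything.
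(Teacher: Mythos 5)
Your plan is correct and is essentially the paper's own proof in different packaging: your observation that translations $x\mapsto x+a$ realize exactly the constant matrices $M_a=(\jac \tilde{H})|_{x=a}$, so that the lemma reduces to showing that vanishing of the degree-$3$ part of $\det(\jac \tilde{H}+M)$ forces $M=M_a$, is precisely what the paper does by first completing the squares in $\tilde{H}_3,\tilde{H}_4$ (using the translation to normalize the four entries in rows $3$ and $4$ of the linear part to zero) and then extracting twelve cubic coefficients of the determinant to kill the remaining twelve parameters. The coefficient bookkeeping you defer is exactly that twelve-equation computation, so carrying out your $16$-parameter version would reproduce the paper's calculation.
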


\begin{proof}
Since the quartic part of $\det (\jac{\tilde{H}} + M)$ is zero, we deduce that
$\det (\jac{\tilde{H}}) = 0$.
By way of completing the squares, we can choose a translation $G$ such that 
the linear part of $F := \tilde{H}(G^{-1}(x)) + M\,G^{-1}(x)$ is of the form
$$
\left( \begin{array}{cccccccc}
 a_1 x_1 &+& b_1 x_2 &+& c_1 x_3 &+& d_1 x_4 \\
 a_2 x_1 &+& b_2 x_2 &+& c_2 x_3 &+& d_2 x_4 \\
 a_3 x_1 &+& b_3 x_2 & &         & &         \\
         & &         & & c_4 x_3 &+& d_4 x_4
\end{array}\right)
$$ 
Notice that $\deg \det \jac F \le 2$. Looking at the coefficients of
$x_1^3$, $x_2^3$, $x_3^3$, and $x_4^3$ of $\det \jac F$, we see that 
$b_3 = a_3 = d_4 = c_4 = 0$. Looking at the coefficients of
$x_1^2 x_3$, $x_1 x_3^2$, $x_2^2 x_4$, and $x_2 x_4^2$ of $\det \jac F$, 
we see that $b_1 = d_1 = a_1 = c_1 = 0$. Looking at the coefficients of
$x_1^2 x_4$, $x_1 x_4^2$, $x_2^2 x_3$, and $x_2 x_3^2$ of $\det \jac F$, 
we see that $b_2 = c_2 = a_2 = d_2 = 0$. 

So $F$ has trivial linear part, and $\tilde{H} - F \in K^4$. 
Hence $\tilde{H}(G) - F(G) \in K^4$, as claimed. The last claim follows from
$\det (\jac{\tilde{H}}) = 0$.
\end{proof}

\begin{proof}[Proof of theorem \ref{rk3np}]
From \cite[Theorem 3.2]{1601.00579}, it follows that (1) is satisfied if
$\rk \jac H \le 2$. So assume that $\rk \jac H = 3$. Then we can follow the cases
of theorem \ref{rk3}.
\begin{itemize}

\item $H$ is as in (1) of theorem \ref{rk3}.

Let $\tilde{H} = S H(S^{-1}x)$. Then only the first $3$ rows of 
$\jac \tilde{H}$ may be nonzero. If the leading principal minor 
matrix $N$ of size $3$ of $\jac \tilde{H}$ is similar over $K$ 
to a triangular matrix, then so is $\jac \tilde{H}$ itself. 
So assume that $N$ is not similar over $K$ to a triangular matrix.
From lemma \ref{lem3}, it follows that $N$ is similar over $K$ to
a matrix of the form
\begin{equation} \label{eq3}
\left( \begin{array}{ccc}
0 & f & 0 \\ b & 0 & f \\ 0 & -b & 0        
\end{array} \right)
\end{equation}
where $f$ and $b$ are independent linear forms in $K[x_4,x_5,\ldots,x_n]$.

Consequently, we can choose $S$, such that for 
$\tilde{H} := S H(S^{-1}x)$, only the first $3$ rows of $\jac \tilde{H}$ 
are nonzero, and the leading principal minor matrix of size $3$ is as
in \eqref{eq3}. If we negate the third row and the third column of \eqref{eq3}, 
and replace $b$ by $x_4$ and $f$ by $x_5$, then we get 
$$
\left( \begin{array}{ccc}
0 & x_5 & 0 \\ x_4 & 0 & -x_5 \\ 0 & x_4 & 0        
\end{array} \right)
$$
We can even choose $S$ such that the leading principal minor matrix of size $3$
of $\jac \tilde{H}$ is as above. So (2) of theorem \ref{rk3np} is satisfied for
$T = S^{-1}$.

If we replace $\tilde{H}_2$ by $0$, then all principal minor determinants of 
$\jac \tilde{H}$ become zero. On account of \cite[Lemma 1.2]{MR1210415}, 
$\jac \tilde{H}$ becomes permutation similar to a triangular matrix. 
From proposition \ref{tameZ} below, we infer that $x + H$ is tame
if $\chr K = 0$.

\item $H$ is as in (2) of theorem \ref{rk3} or as in (2) of theorem \ref{rkr}.

Let $\tilde{H} = T^{-1} H(Tx)$. Then the rows of $\jac_{x_3,x_4,\ldots,x_n} \tilde{H}$ 
are dependent over $K$ in pairs. Suppose first that the first $2$ rows of 
$\jac_{x_3,x_4,\ldots,x_n} \tilde{H}$ are zero. Then we can choose $T$ such that
only the last row of $\jac_{x_3,x_4,\ldots,x_n} \tilde{H}$ may be nonzero.

From \cite[Lemma 1.2]{1601.00579}, it follows that 
leading principal minor matrix $N$ of size $2$ of $\jac \tilde{H}$ is 
nilpotent as well. On account of \cite[Theorem 3.2]{1601.00579}, $N$ is
similar over $K$ to a triangular matrix. From \cite[Corollary 1.4]{1601.00579}, 
we deduce that $\jac \tilde{H}$ is similar over $K$ to a triangular matrix.
So we can choose $T$ such that $\jac \tilde{H}$ is lower triangular, and
(1) is satisfied. Furthermore, $x + H$ is tame if $\chr K = 0$.

Suppose next that the first $2$ rows of 
$\jac_{x_3,x_4,\ldots,x_n} \tilde{H}$ are not both zero. Then we can choose 
$T$ such that only the first row of $\jac_{x_3,x_4,\ldots,x_n} \tilde{H}$ may 
be nonzero. From lemma \ref{lem1} (i) and lemma \ref{lem2}, we infer that we 
can choose $T$ such that every principal minor of $\jac \tilde{H}$ has determinant 
zero. From \cite[Lemma 1.2]{MR1210415}, it follows that $\jac \tilde{H}$ is 
permutation similar to a triangular matrix. So (1) is satisfied.
Furthermore, $x + H$ is tame if $\chr K = 0$.

\item $H$ is as in (3) of theorem \ref{rk3} or as in (3) of theorem \ref{rkr}.

Let $\tilde{H} = T^{-1} H(Tx)$. Then the rows of $\jac_{x_4,x_5,\ldots,x_n} \tilde{H}$ 
are dependent over $K$ in pairs. Suppose first that the first $3$ rows of 
$\jac_{x_4,x_5,\ldots,x_n} \tilde{H}$ are zero. Then we can choose $T$ such that
only the last row of $\jac_{x_4,x_5,\ldots,x_n} \tilde{H}$ may be nonzero,
and just as above, (1) is satisfied. Furthermore, $x + H$ is tame if $\chr K = 0$.

Suppose next that the first $3$ rows of 
$\jac_{x_4,x_5,\ldots,x_n} \tilde{H}$ are not all zero. Then we can choose 
$T$ such that only the first row of $\jac_{x_4,x_5,\ldots,x_n} \tilde{H}$ may 
be nonzero. If we can choose $T$ such that every principal minor of 
$\jac \tilde{H}$ has determinant zero, then (1) is satified, just as before.
Furthermore, $x + H$ is tame if $\chr K = 0$.

So assume that we cannot choose $T$ such that every principal minor of 
$\jac \tilde{H}$ has determinant zero. From lemma \ref{lem1} (ii) and 
lemma \ref{lem2}, we infer that we can choose $T$ such that $\tilde{H}$
is as in lemma \ref{lem2}. More precisely, we can choose $T$ such that
$\jac \tilde{H}$ is of the form
\begin{equation} \label{eq2}
\left( \begin{array}{cccccccc}
0 & x_4 & -x_5 & x_2 & -x_3 & * & * & \cdots \\
0 & 0 & 0 & 0 & 0 & 0 & 0 & \cdots \\
0 & 0 & 0 & 0 & 0 & 0 & 0 & \cdots \\
x_3 & * & * & 0 & 0 & 0 & 0 & \cdots \\
x_2 & * & * & 0 & 0 & 0 & 0 & \cdots \\
* & * & * & 0 & 0 & 0 & 0 & \cdots \\
* & * & * & 0 & 0 & 0 & 0 & \cdots \\
\vdots & \vdots & \vdots & \vdots & 
\vdots & \vdots & \vdots & \ddots 
\end{array} \right)
\end{equation}
Since $\parder{}{x_1} x_1^2 = 0$, the coefficients of $x_1$
of the starred entries in the first column of \eqref{eq2} are zero.
Hence we can clean these starred entries by way of row operations
with rows $4$ and $5$. 

We can also clean them by way of a linear
conjugation, because if a starred entry in the first column is nonzero,
the transposed entry in the first row is zero, so the corresponding
column operations which are induced by the linear conjugation will not have
any effect.

If rows $1$, $4$, and $5$ remain as the only nonzero rows, then $H$ is as in (1) 
of theorem \ref{rk3}, which is the first case. So assume that another row remains
nonzero. By way of additional row operations and associated column operations, 
we can get $\jac \tilde{H}$ of the form
\begin{equation} \label{eq2a}
\left( \begin{array}{cccccccc}
0 & x_4 & -x_5 & x_2 & -x_3 & * & * & \cdots \\
0 & 0 & 0 & 0 & 0 & 0 & 0 & \cdots \\
0 & 0 & 0 & 0 & 0 & 0 & 0 & \cdots \\
x_3 & 0 & x_1 & 0 & 0 & 0 & 0 & \cdots \\
x_2 & x_1 & 0 & 0 & 0 & 0 & 0 & \cdots \\
0 & x_3 & x_2 & 0 & 0 & 0 & 0 & \cdots \\
0 & 0 & 0 & 0 & 0 & 0 & 0 & \cdots \\
0 & 0 & 0 & 0 & 0 & 0 & 0 & \cdots \\
\vdots & \vdots & \vdots & \vdots & 
\vdots & \vdots & \vdots & \ddots 
\end{array} \right)
\end{equation}
where we do not maintain (b) of lemma \ref{lem1} (ii) any more.
Hence $P^{-1} \tilde{H}(Px)$ is as $\tilde{H}$ in (3) for a suitable 
permutation matrix $P$.

In a similar manner as in the case where $H$ is as in (1) of theorem 
\ref{rk3}, we infer that $x + H$ is tame if $\chr K = 0$.

\item $H$ is as in (5) of theorem \ref{rk3}.

Then only the first $4$ columns of $\tilde{H} = T^{-1} H(Tx)$ may be nonzero.
Hence the leading principal minor matrix $N$ of size $4$ of $\jac \tilde{H}$
is nilpotent. We distinguish two subcases.
\begin{compactitem}

\item The rows of $N$ are linearly independent over $K$.

Then there exists an $U \in \GL_4(K)$, such that $U N$ is as $\jac \tilde{H}$
in lemma \ref{lem4}. Furthermore,
$$
\det (UN + U) = \det U \det (N + I_4) = \det U \in K^{*}
$$
So $\det (UN + U) \ne 0$ and $\deg \det (UN + U) \le 2$. 
This contradicts lemma \ref{lem4}.

\item The rows of $N$ are linearly independent over $K$.

Then we can apply the first case above on the map 
$(\tilde{H}_1,\tilde{H}_2,\tilde{H}_3,\tilde{H}_4)$.
Since $N$ has less than $5$ columns, the case where $N$ is not similar 
over $K$ to a triangular matrix cannot occur in the case above. So
$N$ is similar over $K$ to a triangular matrix, and so are $\jac \tilde{H}$
and $\jac H$. Furthermore, $x + H$ is tame if $\chr K = 0$.

\end{compactitem}
\end{itemize}
So it remains to prove the last claim in the case $\chr K \neq 0$.
This follows by way of techniques in the proof of theorem \ref{dim5} in the 
next section.
\end{proof}

\begin{proposition} \label{tameZ}
Let $R$ be an integral domain of characteristic zero, and let $F \in R[x]^n$ 
be a polynomial map which is invertible over $R$. Let $\tilde{F} \in R[x]^n$, 
such that only the $i$\textsuperscript{th} component of $\tilde{F}$ is 
different from that of $F$, and such that $\det \jac \tilde{F} = \det \jac F$. 

Then $\tilde{F}(F^{-1})$ is an elementary invertible polynomial map over $R$. 
In particular, $\tilde{F}$ is invertible over $R$.
\end{proposition}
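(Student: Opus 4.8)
The plan is to identify the polynomial map $G := \tilde{F}(F^{-1})$ explicitly and show that it is an elementary invertible polynomial map over $R$; then $\tilde{F} = G(F)$ exhibits $\tilde{F}$ as a composition of maps invertible over $R$. First I would write $\tilde{F}_i = F_i + g$ with $g \in R[x]$, while $\tilde{F}_j = F_j$ for every $j \ne i$. Since $F$ is invertible over $R$, its inverse $F^{-1}$ lies in $R[x]^n$, so $G = \tilde{F}(F^{-1}) \in R[x]^n$, and componentwise $G_j = F_j(F^{-1}) = x_j$ for $j \ne i$, while $G_i = x_i + h$ with $h := g(F^{-1}) \in R[x]$. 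Thus $G$ is an elementary map precisely when $h$ does not involve $x_i$, and in that case its inverse is obtained by replacing $h$ with $-h$.

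To prove the latter, I would compute $\det \jac G$ in two ways. By the chain rule, $\jac G = (\jac \tilde{F})|_{x = F^{-1}} \cdot \jac(F^{-1})$, so $\det \jac G = \det\big((\jac \tilde{F})|_{x = F^{-1}}\big) \cdot \det \jac(F^{-1})$. Substituting $x = F^{-1}$ into the polynomial identity $\det \jac \tilde{F} = \det \jac F$ and using $(\jac F)|_{x=F^{-1}} \cdot \jac(F^{-1}) = \jac\big(F(F^{-1})\big) = I_n$, we get $\det \jac G = \det\big((\jac F)|_{x = F^{-1}}\big) \cdot \det \jac(F^{-1}) = \det I_n = 1$. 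On the other hand, from $G_i = x_i + h$ and $G_j = x_j$ for $j \ne i$, the matrix $\jac G$ agrees with $I_n$ outside its $i$th row, whose $j$th entry is $\delta_{ij} + \parder{h}{x_j}$; expanding the determinant along the $n-1$ unchanged rows (equivalently, by multilinearity in the $i$th row) gives $\det \jac G = 1 + \parder{h}{x_i}$. Comparing the two computations, $\parder{h}{x_i} = 0$, and since $\chr R = 0$ this forces $h \in R[x_1,\ldots,x_{i-1},x_{i+1},\ldots,x_n]$. Hence $G$ is an elementary invertible polynomial map over $R$.

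Finally, $\tilde{F} = G(F)$ is the composition of the polynomial maps $G$ and $F$, both invertible over $R$, so $\tilde{F}$ is invertible over $R$ as well, with inverse $F^{-1}(G^{-1})$. The argument is short; the only delicate point — and the one place the hypothesis $\chr R = 0$ is used — is the implication $\parder{h}{x_i} = 0 \Rightarrow h$ is free of $x_i$, which fails in characteristic $p$ (e.g.\ $h = x_i^p$), so I would expect any difficulty to lie only in keeping the bookkeeping of the chain rule and the one-row determinant expansion clean.
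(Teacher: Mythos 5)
Your proof is correct, and it takes a genuinely different route from the paper. You pass to $G := \tilde{F}(F^{-1})$ right away, observe that $G$ is the identity outside the $i$th component and equals $x_i + h$ there, and then pin down $h$ by computing $\det \jac G$ twice: once as $1$ via the chain rule together with the polynomial identity $\det \jac \tilde{F} = \det \jac F$ and $(\jac F)|_{x=F^{-1}} \cdot \jac(F^{-1}) = I_n$, and once as $1 + \parder{h}{x_i}$ from the shape of $\jac G$; characteristic zero then forces $h$ to be free of $x_i$, so $G$ is elementary and $\tilde{F} = G(F)$ is invertible. The paper instead argues at the level of transcendence degree: since $\det \jac (F_1,\ldots,F_{n-1},\tilde{F}_n - F_n) = 0$, the rank of that Jacobian is $n-1$, and the rank--transcendence-degree theorem (valid in characteristic zero) gives that $\tilde{F}_n - F_n$ is algebraically dependent on $F_1,\ldots,F_{n-1}$, hence lies in $R[F_1,\ldots,F_{n-1}]$ because $F$ is invertible over $R$; composing with $F^{-1}$ then yields the same elementary form. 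Your argument is more elementary and self-contained, needing only the chain rule, multiplicativity of the determinant, and the fact that $\parder{h}{x_i} = 0$ implies $h$ is free of $x_i$ over a domain of characteristic zero (which is exactly where, as you note, characteristic $p$ would break things, e.g.\ $h = x_i^p$); the paper's argument is shorter given the rank--transcendence-degree machinery it has already cited and fits the toolkit used throughout the paper, where characteristic zero enters instead through separability in $\rk \jac H = \trdeg_K K(H)$.
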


\begin{proof}
Assume without loss of generality that $i = n$. Let $K$ be the fraction field of $R$. 
Then $\rk \jac H = \trdeg_K K(H)$, because $K$ has characteristic zero.
Since 
$$
\rk \jac (F_1,F_2,\ldots,F_{n-1},0) = n-1 = \rk \jac (F_1,F_2,\ldots,F_{n-1},\tilde{F}_n-F_n)
$$
it follows that $\tilde{F}_n-F_n$ is algebraically dependent
over $K$ on $F_1,F_2,\ldots,F_{n-1}$. As $F$ is invertible over $R$, 
$$
\tilde{F}_n-F_n \in R[F_1,F_2,\ldots,F_{n-1}]
$$
So $\tilde{F}_n(F^{-1})-x_n \in R[x_1,x_2,\ldots,x_{n-1}]$. Furthermore, 
$\tilde{F}_i(F^{-1}) = F_i(F^{-1}) = x_i$ for all $i < n$ by assumption, so 
$\tilde{F}(F^{-1})$ is elementary.
\end{proof}

\section{rank 4 with nilpotency}

Let $M \in \Mat_n(K)$ be nilpotent and $v \in K^n$ be nonzero.
Define the {\em image exponent} of $v$ with respect to $M$ as
$$
\ie (M,v) = \ie_K (M,v) := \max \{i \in \N \mid M^i v \ne 0\}
$$ 
and the {\em preimage exponent} of $v$ with respect to $M$ as
$$
\pe (M,v) = \pe_K (M,v) := \max \{i \in \N \mid M^i w = v \mbox{ for some } w \in K^n\}
$$

\begin{theorem} \label{dim5}
Let $\chr K \neq 2$ and $n = 5$. Suppose that $H \in K[x]^5$, 
such that $\jac H$ is nilpotent and $\rk \jac H \ge 4$. Then
$\rk \jac H = 4$, and
there exists a $T \in GL_5(K)$, such that $\jac \big(T^{-1}H(Tx)\big)$ 
is either triangular with zeroes on the diagonal, or of one of the 
following forms.
$$
\left( \begin{array}{ccccc} 
0 & 0 & 0 & 0 & 0 \\
* & 0 & 0 & 0 & 0 \\
0 & x_4 & 0 & x_2 & 0 \\
x_3 & -x_5 & x_1 & 0 & -x_2 \\
* & * & 0 & x_1 & 0
\end{array} \right)
\quad
\left( \begin{array}{ccccc} 
0 & 0 & 0 & 0 & 0 \\
* & 0 & 0 & x_4 & 0 \\
x_2 & x_1 & 0 & -x_5 & -x_4 \\
x_3 & 0 & x_1 & 0 & x_5 \\
* & 0 & 0 & x_1 & 0
\end{array} \right)
$$
Furthermore, $x + H$ is tame.
\end{theorem}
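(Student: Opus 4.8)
The plan is to first dispose of the cheap half of the statement: a nilpotent matrix in $\Mat_5(K)$ has rank at most $4$, so the hypothesis $\rk\jac H\ge 4$ forces $\rk\jac H=4$; equivalently $\jac H$ has corank $1$ over $K(x)$ and is therefore regular nilpotent (a single Jordan block) over $K(x)$, so $\rk\big((\jac H)^i\big)=5-i$ for $0\le i\le 5$. Since $H$ is quadratic homogeneous and $\chr K\ne 2$, Euler's identity gives $(\jac H)\cdot x=2H$, and hence $(\jac H)^i\cdot x=2(\jac H)^{i-1}\cdot H$ is a vector of forms of degree $i+1$. Writing $p_i:=(\jac H)^i\cdot x$, we have $p_1=2H\ne 0$ and $p_5=0$, so $\ell:=\max\{i:p_i\ne 0\}\in\{1,2,3,4\}$. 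For $v\in K^5$ the constant matrix $(\jac H)|_{x=v}$ is nilpotent, $\big((\jac H)|_{x=v}\big)^i v=p_i(v)$, and for generic $v$ it has rank $4$ with $\ie\big((\jac H)|_{x=v},v\big)=\ell$; this is how the invariants $\ie$ and $\pe$ introduced just above enter.

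The main work is a case analysis on $\ell$, together with the preimage exponents of the canonical constant vectors obtained by evaluating $H$ and its Jacobian powers at a generic point. In each case one picks $v\in K^5$ realizing the extreme exponent, takes the chain $v,(\jac H)|_{x=v}v,\dots$ that it generates, completes it to a basis of $K^5$, and lets $T\in\GL_5(K)$ be the corresponding change of basis; conjugating, $\jac\tilde H$ for $\tilde H:=T^{-1}H(Tx)$ acquires a staircase support that forces $\tilde H$ to involve few components and few variables. I would also feed in Corollary \ref{rk4} with $r=4$: for $n=m=5$ and $\chr K\ne 2$ its cases (3) and (5) are excluded and cases (1) and (4) are vacuous, so up to $(S,T)$-equivalence either no further constraint holds or $\tilde H_i\in K[x_1,x_2,x_3]$ for all $i\ge 2$. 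Combined with the conjugation normal form, this pins the Hessians $\hess\tilde H_i$ down to finitely many parameters.

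Finishing then means imposing nilpotency of $\jac\tilde H$ in full — the vanishing of $\tr(\jac\tilde H)^k$ for $k=1,\dots,5$ and of every principal minor of $\jac\tilde H$ — and using \cite[Lemma 1.2]{1601.00579} and the structural results of \cite{1601.00579} on nilpotent Jacobian matrices to eliminate all but finitely many configurations, exactly as in the proof of Theorem \ref{rk3np}; the survivors that are not linearly conjugate to a strictly lower triangular matrix are precisely the two displayed forms. The step I expect to be the main obstacle is this one: promoting the weak $(S,T)$-normal form of Corollary \ref{rk4} to a conjugation normal form without losing nilpotency (the row operations hidden in $S$ do not respect conjugation), and then running the finite case analysis cleanly, without missing or double-counting a form. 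The $\ie/\pe$ bookkeeping on the nilpotent evaluations $(\jac H)|_{x=v}$ makes the first part manageable; the second is the combinatorially heavy core, which for general dimension $5$ is handled with computer support in section $5$ but can be pushed through by hand when $\chr K\ne 2$.

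For the tameness claim I would treat each of the three output forms separately. When $\jac\tilde H$ is strictly lower triangular, $x+\tilde H$ is visibly a composition of elementary automorphisms. For each exceptional form, setting one suitably chosen component of $\tilde H$ equal to $0$ makes all principal minors of the Jacobian vanish, so by \cite[Lemma 1.2]{MR1210415} that Jacobian is permutation-similar to a triangular one; this yields a tame invertible map $x+\bar H$ agreeing with $x+\tilde H$ off the deleted component, and $x+\tilde H$ is recovered from it by a single further elementary map — an argument valid for every field with $\chr K\ne 2$, not merely in characteristic zero as in Proposition \ref{tameZ}. Since $T\in\GL_5(K)$ and linear conjugation preserves tameness, tameness of $x+\tilde H$ transfers back to $x+H$.
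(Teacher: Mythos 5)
Your outline stalls exactly where the real work lies. The paper's proof does not get the two exceptional forms from a qualitative case analysis: it first invokes \cite[Theorem 4.4]{1609.09753} to get $\pe_{K(x)}(\jac H,x)=0$, hence (corank $1$) $\ie_{K(x)}(\jac H,x)=4$, and then \cite[Corollary 4.3]{1609.09753} to conjugate so that $(\jac \tilde H)|_{x=e_1}$ is the full lower Jordan block; only after that strong pointwise normalization does a (computer-assisted, Maple) solution of the nilpotency equations produce the classification. Your proposal never establishes that your $\ell$ equals $4$ (you leave $\ell\in\{1,2,3,4\}$ open, and ruling out $\ell<4$ is precisely the nontrivial content of the cited $\pe=0$ result); your appeal to Corollary \ref{rk4} is, as you concede, nearly vacuous for $n=m=5$, $\chr K\neq 2$; and the decisive step --- showing that the nilpotency conditions admit, up to conjugation, only the strictly triangular case and the two displayed matrices --- is asserted (``can be pushed through by hand'') but not performed, missing or double-counting being exactly the danger you name. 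In addition, your normalization rests on choosing a \emph{generic} $v\in K^5$, which only exists for infinite $K$; the theorem is over any field with $\chr K\neq 2$, and the paper needs a separate argument for finite $K$ (passing to an infinite extension, \cite[Proposition 1.3]{1601.00579}, and verifying \cite[Theorem 5.2]{1609.09753} for the solutions found over the extension).

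The tameness argument also has a gap. Recovering $x+\tilde H$ from the triangularizable map $x+\bar H$ ``by a single further elementary map'' is exactly the content of Proposition \ref{tameZ}, whose proof uses $\rk\jac = \trdeg$ and hence characteristic zero; your claim that this step is valid over every field with $\chr K\neq 2$ is stated without justification and does not follow in positive characteristic, where rank only bounds transcendence degree from below. The paper circumvents this by applying Proposition \ref{tameZ} over the characteristic-zero ring $R=\Z[z_1,\dots,z_4]$ to parametrized representatives $G$ of the exceptional forms and then reducing modulo ideals of $R$ to reach arbitrary base fields; some such specialization (or an explicit verification that the modified component is a polynomial in the retained components) is needed to make your last paragraph correct.
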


\begin{proof}
Suppose first that $K$ is infinite.
From \cite[Theorem 4.4]{1609.09753}, it follows that $\pe_{K(x)}(\jac H,x) = 0$.
As $\rk \jac H = n - 1$, $\ie_{K(x)}(\jac H,x) + \pe_{K(x)}(\jac H,x) = n-1$, so
$\ie_{K(x)}(\jac H,x) = n - 1$. From \cite[Corollary 4.3]{1609.09753}, it follows
that there exists a $T \in \GL_5(K)$, such that for 
$\tilde{H} := T^{-1}H(Tx)$, we have
$$
(\jac \tilde{H})|_{x = e_1} = 
\left( \begin{array}{ccccc} 
0 & 0 & 0 & 0 & 0 \\
1 & 0 & 0 & 0 & 0 \\
0 & 1 & 0 & 0 & 0 \\
0 & 0 & 1 & 0 & 0 \\
0 & 0 & 0 & 1 & 0
\end{array} \right)
$$
Using this, one can compute all solutions, and match them against 
the given classification. We did this with Maple 8, see {\tt dim5qdr.pdf}.

To prove that $x + H$ is tame, we show that the maps of the classification 
are tame. If $\jac H$ is similar over $K$ to a triangular matrix, 
then $x + H$ is tame. So assume that $\jac H$ is not similar over $K$ to
a triangular matrix. If 
$$
G = \left( \begin{array}{c}
0 \\ z_1 x_1^2 \\ x_2 x_4 \\ x_1 x_3 - x_2 x_5 \\
z_2 x_1^2 + z_3 x_1 x_2 + z_4 x_2^2 + x_1 x_4
\end{array} \right) 
\quad \mbox{or} \quad
G = \left( \begin{array}{c}
0 \\ z_1 x_1^2 + \tfrac12 x_4^2 \\ x_1 x_2 - x_4 x_5 \\ 
x_1 x_3 - \tfrac12 x_5^2 \\
z_2 x_1^2 + x_1 x_4
\end{array} \right) 
$$
then we can apply proposition \ref{tameZ} with $R = \Z[z_1,z_2,z_3,z_4]$
and $i = 4$, to obtain that $x + G$ is the composition of an elementary map 
and a map $x + \tilde{G}$ for which $\jac \tilde{G}$ is permutation similar to
a triangular matrix with zeroes on the diagonal. So $x + G$ is tame over $R$.
Hence $x + G$ modulo $I$ is tame over $R/I$ for any ideal $I$ of $R$.
Since $x + H$ has this form up to conjugation with a linear map, we infer
that $x + H$ is tame.

Suppose next that $K$ is finite, and let $L$ be an infinite extension
field of $K$. If $\jac H$ is similar over $L$ to a triangular matrix,
then by \cite[Proposition 1.3]{1601.00579}, $\jac H$ is similar over $K$ 
to a triangular matrix as well. So assume that $\jac H$ is not similar over 
$K$ to a triangular matrix. Then one can check for the solutions over $L$
that \cite[Theorem 5.2]{1609.09753} is satisfied, which we did. So 
\cite[Corollary 4.3]{1609.09753} holds over $K$ as well, and so does
this theorem.
\end{proof}

\begin{theorem} \label{dim6}
Let $\chr K = 2$ and $n = 6$. Suppose that $H \in K[x]^6$, 
such that $\jac H$ is nilpotent and $\rk \jac H \ge 4$. Then
$\rk \jac H = 4$, and 
there exists a $T \in GL_6(K)$, such that $\jac \big(T^{-1}H(Tx)\big)$ 
is either triangular with zeroes on the diagonal, or of one of the 
following forms.
\begin{gather*}
\left( \begin{array}{cccccc} 
0 & 0 & 0 & 0 & 0 & 0 \\
0 & 0 & 0 & 0 & 0 & 0 \\
x_2 & x_1 & 0 & 0 & 0 & 0 \\
0 & 0 & x_5 & 0 & x_3 & 0 \\
* & * & * & x_2 & 0 & -x_3 \\
* & * & * & 0 & x_2 & 0
\end{array} \right) 
\quad
\left( \begin{array}{cccccc} 
0 & 0 & 0 & 0 & 0 & 0 \\
0 & 0 & 0 & 0 & 0 & 0 \\
0 & x_5 & 0 & 0 & x_2 & 0 \\
x_3 & \!\!-c x_5 - x_6\!\! & x_1 & 0 & -c x_2 & -x_2 \\
x_4 & c x_6 & 0 & x_1 & 0 & c x_2 \\
x_5 & 0 & 0 & 0 & x_1 & 0
\end{array} \right) \\
\left( \begin{array}{cccccc} 
0 & 0 & 0 & 0 & 0 & 0 \\
0 & 0 & 0 & 0 & 0 & 0 \\
0 & x_4 & 0 & x_2 & 0 & 0 \\
x_3 & -x_5 & x_1 & 0 & -x_2 & 0 \\
x_4 & 0 & 0 & x_1 & 0 & 0 \\
* & * & * & * & * & 0
\end{array} \right)
\quad
\left( \begin{array}{cccccc} 
0 & 0 & 0 & 0 & 0 & 0 \\
0 & 0 & 0 & x_5 & x_4 & 0 \\
x_2 & x_1 & 0 & -x_6 & -2x_5 & -x_4 \\
x_3 & 0 & x_1 & 0 & x_6 & x_5 \\
x_4 & 0 & 0 & x_1 & 0 & 0 \\
x_5 & 0 & 0 & 0 & x_1 & 0
\end{array} \right)
\end{gather*}
where $c \in \{0,1\}$. Furthermore, there exists a tame invertible map 
$x + \bar{H}$ such that $\bar{H}$ is quadratic homogeneous and 
$\jac \bar{H} = \jac H$.
\end{theorem}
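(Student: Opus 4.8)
The plan is to follow the proof of Theorem~\ref{dim5} essentially verbatim, absorbing the two new complications: $\chr K = 2$, and the fact that $\rk \jac H = 4$ now falls two short of $n-1$. As there, I would first dispose of the case where $K$ is finite by passing to an infinite extension field $L \supseteq K$: triangularizability over $L$ descends to triangularizability over $K$ by \cite[Proposition~1.3]{1601.00579}, and one checks the hypotheses of \cite[Theorem~5.2]{1609.09753} for the $L$-solutions, so that \cite[Corollary~4.3]{1609.09753} — the main tool below — is available over $K$ as well. Hence it suffices to treat infinite $K$.

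For infinite $K$, apply \cite[Theorem~4.4]{1609.09753} to get $\pe_{K(x)}(\jac H,x)=0$, and then read off the admissible values of $\ie_{K(x)}(\jac H,x)$. Since $\jac H$ is nilpotent of rank $4$ over $K(x)$, its Jordan type there is $(6)$, $(5,1)$, $(4,2)$ or $(3,3)$, the first corresponding to $\rk \jac H = 5$; combined with $\pe_{K(x)}(\jac H,x)=0$ this constrains where $x$ sits and so pins down $\ie_{K(x)}(\jac H,x)$ (and, as part of the analysis, rules out type $(6)$, giving $\rk \jac H = 4$). In each admissible case, \cite[Corollary~4.3]{1609.09753} yields $T \in \GL_6(K)$ such that, for $\tilde H := T^{-1}H(Tx)$, the constant matrix $(\jac \tilde H)|_{x=e_1}$ is a prescribed direct sum of nilpotent Jordan blocks, one of size $\ie_{K(x)}(\jac H,x)+1$. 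Here the characteristic-$2$ behaviour of Hessians is an extra constraint: by Proposition~\ref{evenrk} and the remark after it, $\hess \tilde H_i$ is alternating with zero diagonal, hence of even rank, which eliminates several configurations and keeps the search finite. For each surviving configuration one solves, symbolically, the system expressing that the linear-entry matrix $\jac \tilde H$ has the prescribed specialization at $e_1$, is nilpotent, and has rank $4$; this is the analogue of the Maple computation recorded in {\tt dim5qdr.pdf}, now a $6\times 6$ problem over a parameter ring in characteristic $2$, and its output is matched term-by-term against the four displayed forms (with $c\in\{0,1\}$) and the triangular case.

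For the tameness assertion — which is only the weaker statement that some $x+\bar H$ with $\jac \bar H = \jac H$ is tame, since in characteristic $2$ one may alter $H$ by any $\sum_i c_i x_i^2$ without changing $\jac H$ — I would, for each of the four forms, exhibit an explicit parametrized lift $\bar G \in R[x]^6$ over $R=\Z[z_1,z_2,\ldots]$ whose reduction modulo an ideal $I$ (with $2\in I$) together with a specialization of the $z_i$ has Jacobian equal to the given form. Each $\bar G$ is chosen, as were the $G$'s in the proof of Theorem~\ref{dim5}, so that one of its components is polynomially dependent on the others; then Proposition~\ref{tameZ} applies over $R$ of characteristic zero and shows $x+\bar G$ is the composite of an elementary map and a map whose Jacobian is permutation-similar over $R$ to a strictly triangular matrix, so $x+\bar G$ is tame over $R$. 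Reducing every elementary and linear factor modulo $I$ and specializing the parameters, $x+\bar G$ descends to a tame invertible map $x+\bar H$ over $K$ with $\bar H$ quadratic homogeneous and $\jac \bar H = \jac H$, as required; conjugating back by $T$ preserves tameness, and the triangular case needs no lift.

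The main obstacle is the symbolic computation in the infinite-$K$ case: compared with dimension $5$ it is a $6\times 6$ system with more free parameters, and the characteristic-$2$ degeneracies (alternating Hessians, the collapsing of coefficients such as $2x_i$ to $0$) both enlarge the case tree through the extra Jordan types $(4,2)$ and $(3,3)$ and make exhaustiveness harder to certify. A secondary delicate point is the tameness step: one must check that \emph{every} form — in particular the second for both values of $c$, and the fourth, whose natural lift genuinely uses the term $-2x_5$ that vanishes modulo $2$ — admits such a characteristic-zero parametrized lift to which Proposition~\ref{tameZ} applies, and that the ideal and specialization used to descend land exactly on the stated $K$-form.
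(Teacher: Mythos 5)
Your treatment of the infinite\mbox{-}$K$ case and of tameness is essentially the paper's argument (normalize $(\jac \tilde{H})|_{x=e_1}$ via \cite[Theorem 4.4]{1609.09753}, finish with a Maple computation matched against the classification, and get tameness from Proposition \ref{tameZ} applied to parametrized lifts over $\Z[z_1,\ldots,z_{10}]$, using only $\jac \bar{H}=\jac H$). But you make the infinite case look harder than it is: in characteristic $2$ one has $\jac H\cdot x=2H=0$, so $\ie_{K(x)}(\jac H,x)=0$ for free; combined with $\pe_{K(x)}(\jac H,x)=0$ from \cite[Theorem 4.4]{1609.09753}, the vector $x$ lies in the kernel but not in the image of $\jac H$, which forces a Jordan block of size $1$; together with $\rk \jac H\ge 4$ this already gives $\rk \jac H=4$ and Jordan type $(5,1)$ over $K(x)$. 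So the types $(4,2)$ and $(3,3)$ that you expect to ``enlarge the case tree'' simply do not occur, and there is only the single specialization form displayed in the paper's proof.

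The genuine gap is your finite\mbox{-}$K$ reduction. You propose to copy the dimension\mbox{-}$5$ argument verbatim: check \cite[Theorem 5.2]{1609.09753} for the solutions over an infinite extension $L$ and conclude that the normalization is available over $K$. That cannot work here: the paper exhibits explicit maps over $\#K=2$, namely $H=(0,0,x_1x_2,x_1x_3,x_2x_4,(x_1-x_2)x_5)$ and $H=(0,0,x_1x_4,x_1x_3-x_2x_5,x_2x_4,(x_1-x_4)x_5)$, which satisfy neither \cite[Theorem 4.2]{1609.09753} nor lemma \ref{Irlem}, because the field is too small for the computational method. For this reason the paper splits the finite case according to whether the columns of $\jac H$ are dependent over $L$: if they are, they are dependent over $K$, one may take the last column of $\jac H$ zero, the leading principal minor of size $5$ is nilpotent of rank $3$, and theorem \ref{rk3np} produces the third displayed form; only when the columns are independent over $L$ does one invoke lemma \ref{lem5} to obtain $v\in K^6$ with $(\jac H)|_{x=v}^4\neq 0$ and then transfer \cite[Theorem 4.2]{1609.09753} to $K$. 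Without this dichotomy your argument does not reach the third family over small fields, so the finite\mbox{-}field case of the theorem is not established by your proposal.
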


\begin{proof}
Suppose first that $K$ is infinite.
From \cite[Theorem 4.4]{1609.09753}, it follows that $\pe(\jac H,x) = 0$.
Since $\ie_{K(x)}(\jac H,x) = 0$ as well, it follows from 
\cite[Theorem 4.4]{1609.09753} that there exists a $T \in \GL_5(K)$, 
such that for $\tilde{H} := T^{-1}H(Tx)$, we have
$$
(\jac \tilde{H})|_{x = e_1} = 
\left( \begin{array}{cccccc} 
0 & 0 & 0 & 0 & 0 & 0 \\
0 & 0 & 0 & 0 & 0 & 0 \\
0 & 1 & 0 & 0 & 0 & 0 \\
0 & 0 & 1 & 0 & 0 & 0 \\
0 & 0 & 0 & 1 & 0 & 0 \\
0 & 0 & 0 & 0 & 1 & 0
\end{array} \right)
$$
Using this, one can compute all solutions, and match them against 
the given classification. We did this with Maple 8, see {\tt dim6chr2qdr.pdf}.

To prove that $x + \bar{H}$ is tame for some quadratic homogeneous $\bar{H}$
such that $\jac \bar{H} = \jac H$, we can use the same arguments as in
the proof of theorem \ref{dim5}. Namely, we apply proposition \ref{tameZ} 
with $R = \Z[z_1,z_2,z_3,z_4,z_5,z_6,z_7,z_8,z_9,z_{10}]$ and
$i=5$, $i=5$, $i=4$, and $i=4$ respectively.

Suppose next that $K$ is finite, and let $L$ be any infinite extension
field of $K$. If $\jac H$ is similar over $L$ to a triangular matrix,
then by \cite[Proposition 1.3]{1601.00579}, $\jac H$ is similar over $K$ 
to a triangular matrix as well. So assume that $\jac H$ is not similar over 
$K$ to a triangular matrix. We distinguish two cases.
\begin{itemize}
 
\item The columns of $\jac H$ are dependent over $L$.

Then the columns of $\jac H$ are dependent over $K$ as well. So we may
assume that the last column of $\jac H$ is zero. Then the leading principal
minor $N$ of size $5$ of $\jac H$ is nilpotent as well. Just like
$\rk \jac H = 6 - 2 = 4$, we deduce that $\rk N = 5 - 2 = 3$. So we
can apply theorem \ref{rk3np} on $N$, to infer that we can get 
$\jac \tilde{H}$ of the form of the third matrix.

\item The columns of $\jac H$ are independent over $L$.
 
Then one can check for the solutions over $L$
that lemma \ref{lem5} below is satisfied, which we did. So 
there exists a $v \in K^6$, such that $(\jac H)|_{x=v}^4 \neq 0$.
Hence the Jordan Normal Form of $(\jac H)|_{x=v}$ has Jordan blocks of
size $1$ and $5$, just like that of $\jac H$. 

Furthermore, 
$\ie\big((\jac H)|_{x=v},v\big) = 0 = \ie_{K(x)}(\jac H,x)$, and it follows 
from \cite[Theorem 4.4]{1609.09753} that $\pe\big((\jac H)|_{x=v},v\big) = 0
= \pe_{K(x)}(\jac H,x)$. Consequently, one can verify that 
\cite[Theorem 4.2]{1609.09753} holds over $K$ as well, and so does 
this theorem. \qedhere

\end{itemize}
\end{proof}

\begin{lemma} \label{lem5}
Let $L$ be an extension field of $K$. Suppose that $H \in K[x]^n$
and $T \in \GL_n(L)$, such that for $\tilde{H} := T^{-1}H(Tx)$,
the ideal generated by the entries of $(\jac \tilde{H})^s$ 
contains a power of a polynomial $f$.

Then there exists a $v \in K^n$, such that $(\jac H)|_{x=v}^s \neq 0$,
in the following cases.
\begin{enumerate}[\upshape (i)]

\item $\#K \ge \deg f + 1$;

\item $f$ is homogeneous and $\#K \ge \deg f$.
 
\end{enumerate}
\end{lemma}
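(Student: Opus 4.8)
The plan is to transport the hypothesis on $\jac\tilde H$ back to $\jac H$ by keeping track of what conjugation by $T$ and the substitution $x\mapsto Tx$ do to the relevant ideal of polynomials, and then to reduce the conclusion to an elementary non-vanishing statement for a single polynomial over $K$.

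First I would record the identity $(\jac\tilde H)^s = T^{-1}\,\bigl[(\jac H)^s\bigr]_{x=Tx}\,T$, which follows from the chain rule together with the fact that the substitution $x\mapsto Tx$ commutes with matrix multiplication. Write $\sigma$ for the $L$-algebra automorphism of $L[x]$ given by $x\mapsto Tx$. Since the entries of $T^{-1}PT$ are $L$-linear combinations of the entries of $P$ and conversely, conjugation by $T$ does not change the ideal of $L[x]$ generated by the entries of a matrix $P$; hence the ideal generated by the entries of $(\jac\tilde H)^s$ equals $\sigma(I_L)$, where $I_L\subseteq L[x]$ denotes the ideal generated by the entries of $(\jac H)^s$. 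These entries lie in $K[x]$, since $H\in K[x]^n$. By hypothesis $f^N\in\sigma(I_L)$ for some $N$ (throughout we take $f\neq0$, which is what is meant: with $f=0$ the stated hypothesis is trivially true but the conclusion may fail), so $g^N\in I_L$ where $g:=\sigma^{-1}(f)=f(T^{-1}x)$. Since $T^{-1}$ is an invertible linear substitution, $g$ is a nonzero polynomial with $\deg g=\deg f$, and $g$ is homogeneous of degree $\deg f$ whenever $f$ is.

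Next I would observe that it suffices to exhibit a $v\in K^n$ with $g(v)\neq0$: if $(\jac H)|_{x=v}^s=0$ for some $v\in K^n$, then every entry of $(\jac H)^s$ vanishes at $v$, hence so does every element of $I_L$ (being an $L[x]$-combination of those entries, evaluated at the point $v\in K^n\subseteq L^n$), and in particular $g^N(v)=0$, which forces $g(v)=0$. To produce such a $v$, I would note that the coefficients of $g$ span a finite-dimensional $K$-subspace of $L$; choosing a $K$-basis $c_1,\dots,c_r$ of it, write $g=\sum_{j=1}^r c_j g_j$ with $g_j\in K[x]$, so that some $p:=g_{j_0}$ is nonzero, with $\deg p\le\deg f$ and with $p$ homogeneous of degree $\deg f$ when $f$ is homogeneous. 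For $v\in K^n$ one has $g(v)=\sum_j c_j\,g_j(v)$ with each $g_j(v)\in K$, so by the $K$-linear independence of the $c_j$, $g(v)=0$ forces $p(v)=0$; hence it is enough to find $v\in K^n$ with $p(v)\neq0$. In case (ii) this holds by \cite[Lemma 5.1 (ii)]{1310.7843}, since $p$ is homogeneous and $\#K\ge\deg f=\deg p$; in case (i) it holds because a nonzero polynomial over $K$ of degree at most $\deg f<\#K$ cannot vanish on all of $K^n$ (an easy induction on the number of variables, the base case being that a nonzero univariate polynomial of degree $d$ has at most $d$ roots).

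The main point requiring care is the bookkeeping of the ideal correspondence: one must compose conjugation by $T$ with the substitution $x\mapsto Tx$ in the right order and recognize both as operations that preserve the ideal generated by the matrix entries over $L[x]$, and one must remember that $g$ has coefficients in $L$ rather than $K$ --- which is exactly why the passage through a $K$-basis of $L$ is needed before the classical non-vanishing lemma, which lives over $K$, can be invoked. Everything else is routine.
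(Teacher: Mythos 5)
Your proof is correct and follows essentially the same route as the paper: both reduce the claim to finding $v \in K^n$ with $f(T^{-1}v) \neq 0$ and then use the conjugation identity $(\jac H)|_{x=v}^s = T\,(\jac \tilde{H})|_{x=T^{-1}v}^s\,T^{-1}$ together with the fact that every element of the entry ideal vanishes wherever the entries do. The only difference is that the paper cites \cite[Lemma 5.1]{1310.7843} directly for the non-vanishing point of $f(T^{-1}x)$, while you additionally spell out the reduction from $L$-coefficients to a nonzero polynomial over $K$ via a $K$-basis of the coefficient span, which is a harmless (and correct) extra step.
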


\begin{proof}
From  \cite[Lemma 5.1]{1310.7843}, it follows that there exists a $v \in K^n$
such that $f(T^{-1}v) \ne 0$. Let $I$ be the ideal over $L$ generated by the 
entries of $(\jac \tilde{H})^s$. As the radical of $I$ contains $f$, the 
radical of $I(T^{-1}v)$ contains $f(T^{-1}v)$. 
$I(T^{-1}v)$ is generated over $L$ by the entries of 
$(\jac \tilde{H})|_{x=T^{-1}v}^s$ and
$$
T (\jac \tilde{H})|_{x=T^{-1}v}^s T^{-1} = (\jac H)|_{x=v}^s
$$
From $f(T^{-1}v) \ne 0$, it follows that $I(T^{-1}v) \ne 0$ and 
$(\jac H)|_{x=v}^s \ne 0$.
\end{proof}

The cardinality of $K$ may be too small for the computational method
of \cite[Theorem 4.2]{1609.09753}: the following maps $H$ do not satisfy 
\cite[Theorem 4.2]{1609.09753} and neither lemma \ref{Irlem}:
\begin{itemize}

\item $\#K = 3$ and $H = (0,\frac12 x_1^2,\frac12 x_2^2,(x_1+x_2)x_3,(x_1-x_2)x_4)$;

\item $\#K = 2$ and $H = (0,0,x_1 x_2,x_1 x_3,x_2 x_4,(x_1-x_2)x_5)$;

\item $\#K = 2$ and $H = (0,0,x_1 x_4,x_1 x_3-x_2 x_5,x_2 x_4,(x_1-x_4)x_5)$.

\end{itemize}

\begin{theorem}
Let $H$ be quadratic homogeneous such that $\jac H$ is nilpotent and $\rk \jac H \le 4$.

Then the rows of $\jac H$ are dependent over $K$. Furthermore, one of the following
statements hold.
\begin{enumerate}[\upshape (1)]

\item Every set of $6$ rows of $\jac H$ is dependent over $K$.

\item There exists a tame invertible map 
$x + \bar{H}$ such that $\bar{H}$ is quadratic homogeneous and 
$\jac \bar{H} = \jac H$.

\end{enumerate}
In addition, $x + H$ is stably tame if $\chr K \neq 2$.
\end{theorem}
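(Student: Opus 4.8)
The plan is to run through the classifications already established --- Theorem~\ref{rk3np} for $r:=\rk\jac H\le 3$, and Theorems~\ref{dim5} and~\ref{dim6} for the base cases of $r=4$ --- and to read the three conclusions off the explicit normal forms. Observe first that ``the rows of $\jac H$ are dependent over $K$'' and ``every set of $6$ rows of $\jac H$ is dependent over $K$'' just say that the $K$-linear span $V$ of the rows of $\jac H$ has dimension $\le n-1$, respectively $\le 5$; that $\dim_K V$ is unchanged under a substitution $H\mapsto SH(Tx)$ with $S,T\in\GL_n(K)$ (such a substitution acts as a $K$-linear bijection on row vectors of linear forms); and that, when $\chr K\ne 2$, $\dim_K V$ equals the dimension of the $K$-span of the quadratic forms $H_1,\dots,H_n$, via the Hessians. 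So while proving the first two conclusions I may freely conjugate $H$, compose it with invertible linear maps, and permute or take $K$-combinations of the $H_i$; the remaining conclusion (stable tameness when $\chr K\ne 2$) and alternative~(2) of the statement will come from the tameness assertions inside the classification theorems.

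For $r\le 3$ I apply Theorem~\ref{rk3np} to get $\tilde H:=T^{-1}H(Tx)$ in one of its three forms. In form~(1) the matrix $\jac\tilde H$ is strictly lower triangular, so $x+\tilde H$ is a triangular --- hence tame --- automorphism; then $x+H$ is tame and thus stably tame, one may take $\bar H=H$ in alternative~(2), and the vanishing first row of $\jac\tilde H$ gives the row dependence. In forms~(2) and~(3) only three or four rows of $\jac\tilde H$ are nonzero, so $\dim_K V\le 4$, which gives the row dependence and alternative~(1); and Theorem~\ref{rk3np} itself supplies the tameness of $x+H$ when $\chr K\ne 2$ and, in general, a tame $x+\bar H$ with $\jac\bar H=\jac H$.

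For $r=4$ I split on $\chr K$. If $\chr K\ne 2$ and $n=5$, Theorem~\ref{dim5} applies directly: argue as above in its triangular case, while in its two explicit forms the first row of $\jac\tilde H$ vanishes (so $\dim_K V\le 4$), $n<6$, and $x+H$ is tame, which settles all three conclusions. If $\chr K\ne 2$ and $n\ge 6$, I use Corollary~\ref{rk4} to put $\jac H$, up to a substitution $SH(Tx)$, into one of its forms~(1),~(2),~(4) (forms~(3),~(5) force $\chr K=2$), and in each of these I read off that $\dim_K V\le 5$, using the normal form together with $\rk\jac H=4$ to bound whatever block is not already visibly small; this gives the first two conclusions. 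It then remains to prove $x+H$ stably tame: since $H_1,\dots,H_n$ span a space of quadratic forms of dimension $\le 5$, I write $H=A\cdot(G_1,\dots,G_5)$ with $A\in\Mat_{n,5}(K)$, complete $A$ to an invertible matrix, and conjugate $x+H$ into the identity on $n-5$ coordinates together with a quadratic shear in the remaining five whose Jacobian block is nilpotent; a variant of the argument in the proof of Theorem~\ref{dim5} (via Proposition~\ref{tameZ}), applied after adjoining variables, then yields stable tameness of $x+H$. If $\chr K=2$ and $n=6$, Theorem~\ref{dim6} applies directly and furnishes a tame $x+\bar H$ with $\jac\bar H=\jac H$, giving alternative~(2); and in each of its four forms the first row of $\jac\tilde H$ vanishes, so $\dim_K V\le 5$, giving alternative~(1) and the row dependence. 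Finally, if $\chr K=2$ and $n\ne 6$ then $n\ge 5$, and both $n=5$ and $n\ge 7$ are brought back to $n=6$ and to Theorem~\ref{rk3np}: Corollary~\ref{rk4} again confines $\jac H$ to a small block, and a $K$-dependence among the rows of $\jac\tilde H$ lets one conjugate one of them to $0$ and descend.

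The routine part is reading the three conclusions off the normal forms once the dimension is small. The real work lies in the reduction for $r=4$ with $n$ larger than the dimension treated in Theorems~\ref{dim5} and~\ref{dim6}: one must extract the bound $\dim_K V\le 5$ from every case of Corollary~\ref{rk4} --- for its ``few columns'' and ``three variables'' cases this genuinely uses $\rk\jac H=4$ --- and then carry the tameness statements, especially ``$x+H$ stably tame'' and ``$x+\bar H$ tame with $\jac\bar H=\jac H$'', through the dimension reduction, where the bookkeeping over adjoining and removing variables (and, over small $K$, the field independence handled by Lemma~\ref{lem5}) is the delicate point.
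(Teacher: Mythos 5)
Your handling of $r\le 3$ and of the base dimensions ($n=5$ with $\chr K\ne 2$, $n=6$ with $\chr K=2$) agrees with the paper, but the argument breaks down exactly where the real work lies: $r=4$ with $n$ beyond those dimensions. Your central claim there --- that in cases (1), (2), (4) of Corollary~\ref{rk4} one can ``read off'' $\dim_K V\le 5$ for the $K$-span $V$ of the rows --- is false, not merely unproved. In case (2) the components $\tilde H_2,\dots,\tilde H_m$ range over the $6$-dimensional space of quadratic forms in $x_1,x_2,x_3$, and in case (4) over the $15$-dimensional space of quadratic forms in five variables, so the normal form plus $\rk\jac H=4$ gives no bound of $5$. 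Concretely, take $n=14$ and $H=(0,0,0,0,q_1,\dots,q_{10})$ where $q_1,\dots,q_{10}$ are the ten quadratic monomials in $x_1,\dots,x_4$: then $(\jac H)^2=0$, $\rk\jac H=4$, only the first four columns are nonzero (case (4)), yet for $\chr K\ne 2$ there are ten $K$-independent rows, so alternative (1) fails and the theorem must be settled through alternative (2). This is why the statement is a dichotomy; the paper never claims (1) in cases (2)--(5) of Corollary~\ref{rk4}, and proving (2) there requires the nilpotency of $\jac H$, which is destroyed by the normalization $H\mapsto SH(Tx)$ underlying Corollary~\ref{rk4} and so cannot be combined with the normal form the way you do. The paper's proof works only with conjugations $T^{-1}H(Tx)$ and spends its effort (trace and principal-minor identities, permuted versions of \cite[Lemma 2]{1601.00579}, the analysis of $\tilde H_1$ as in Lemma~\ref{lem1}) converting cases (2) and (3) into the ``few nonzero columns'' situation of Lemma~\ref{lem6}, where Theorems~\ref{rk3np}, \ref{dim5} and \ref{dim6} apply; nothing in your proposal performs this step.

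Two further mechanisms you rely on do not work. For stable tameness when $\chr K\ne 2$ and $n\ge 6$ you propose writing $H=A\cdot(G_1,\dots,G_5)$ and conjugating to ``the identity on $n-5$ coordinates together with a quadratic shear in the remaining five'': even granting a $5$-dimensional component span, after conjugation the five nonzero components are still quadratic forms in all $n$ variables, so you do not obtain a map in five variables and cannot invoke Theorem~\ref{dim5}; what Lemma~\ref{lem6} needs is few nonzero \emph{columns} (dependence on few variables), which is precisely what the paper's reductions produce, and for the genuinely ``few rows'' case the paper instead invokes \cite[Theorem 4.14]{MR2927368} together with Theorem~\ref{dim5}, a tool absent from your argument. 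The same row/column confusion undermines your $\chr K=2$, $n\ne 6$ sketch: conjugating a \emph{row} of $\jac\tilde H$ to zero does not permit descent to a lower dimension, since the remaining components still involve that variable; only a zero \emph{column} does, via nilpotency of the leading principal minor matrix (\cite[Lemma 2]{1601.00579}). So the dimension reductions that you label as ``the real work'' are indeed the crux, and the mechanisms you propose for them fail.
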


\begin{proof}
The case where $\rk \jac H \le 3$ follows from theorem \ref{rk3np}, so assume that
$\rk \jac H = 4$. We follow the cases of corollary \ref{rk4}.
\begin{itemize}

\item $H$ is as in (1) of corollary \ref{rk4}.

Let $\tilde{H} = S^{-1}H(Sx)$. Then only the first $5$ rows of $\jac \tilde{H}$
may be nonzero. Hence every set of $6$ rows of $\jac H$ is dependent over $K$.

If $n > 5$, then the sixth row of $\jac \tilde{H}$ is zero.
So assume that $n \le 5$. Then it follows from (i) of lemma \ref{lem6} below that
the rows of $\jac H$ are dependent over $K$. 
From \cite[Theorem 4.14]{MR2927368} and theorem \ref{dim5}, it follows that 
$x + \tilde{H}$ is stably tame if $\chr K \neq 2$.

\item $H$ is as in (2) of corollary \ref{rk4}.

Let $\tilde{H} = T^{-1}H(Tx)$ and let $N$ be the leading principal minor matrix
of size $3$ of $\jac \tilde{H}$.
Just like for the case where $H$ is as in (2) of theorem \ref{rk3} in the proof 
of theorem \ref{rk3np}, we can reduce to the case where only the first row
and the first three columns of $\jac \tilde{H}$ may be nonzero. 

From proposition \ref{evenrk}, it follows that we may assume that 
$\hess (\tilde{H}_1|_{x_1=0})$ is a product of a permutation matrix 
and a diagonal matrix. We distinguish three subcases:
\begin{compactitem}

\item $\parder{}{x_2} \tilde{H}_1, \parder{}{x_3} \tilde{H}_1 \in K[x_1,x_2,x_3]$.

Since $\tr \jac \tilde{H} = 0$, we deduce that $\parder{}{x_1} \tilde{H}_1 \in 
K[x_1,x_2,x_3]$ as well. We may assume that the coefficient of either $x_4^2$ 
or $x_4 x_5$ of $\tilde{H}_1$ is nonzero. With techniques in the proof of 
corollary \ref{symdiag}, $x_4x_5$ can be transformed to $x_4^2 - x_5^2$,
so we may assume that the coefficient of $x_4^2$ of $\tilde{H}_1$ is nonzero.

Let $G := \tilde{H}(x_1,x_2,x_3,x_4,0,0,\ldots,0)$. Then 
$\jac G = (\jac \tilde{H})|_{x_5=\cdots=x_n=0}$. Consequently,
the nonzero part of $\jac G$ is restricted to the first four columns.
From (i) of lemma \ref{lem6} below, it follows that the rows of 
$\jac G$ are dependent over $K$ and that $x+G$ is tame.

Since the first row of $\jac G$ is independent of the other rows of $\jac G$, 
the rows of $\jac \tilde{H}$ are dependent over $K$. From proposition 
\ref{tameZ}, it follows that $x + \tilde{H}$ is tame if $\chr K = 0$,
which gives (2) if $\chr K = 0$. Using techniques in the proof of 
theorem \ref{dim5}, the case $\chr K \neq 0$ follows as well.

\item $\parder{}{x_2} \tilde{H}_1, \parder{}{x_3} \tilde{H}_1 \notin 
K[x_1,x_2,x_3]$.

Then we may assume that the coefficients of $x_2 x_4$ and $x_3 x_5$ of
$\tilde{H}_1$ are nonzero. Looking at the coefficients of $x_4^1$ and 
$x_5^1$ of the sum of the determinants of the principal minors of size $2$, 
we infer that $\parder{}{x_1} \tilde{H}_2 = 0$ and 
$\parder{}{x_1} \tilde{H}_3 = 0$.

From a permuted version of \cite[Lemma 2]{1601.00579}, we deduce that
$\jac_{x_2,x_3} \allowbreak (\tilde{H}_2,\tilde{H}_3)$ is nilpotent. 
On account of theorem \ref{rk3np} or \cite[Theorem 3.2]{1601.00579}, 
the second and the third row of $\jac \tilde{H}$ are dependent over $K$.

By applying \cite[Lemma 2]{1601.00579} twice, we see that we can 
replace $\tilde{H}_1$ by $0$ without affecting the nilpotency 
of $\jac \tilde{H}$. Now (2) follows in a similar manner as in the 
case $\parder{}{x_2} \tilde{H}_1, \parder{}{x_3} \tilde{H}_1 \in 
K[x_1,x_2,x_3]$ above.

\item None of the above. 

Then we may assume that $\parder{}{x_2} \tilde{H}_1 \notin 
K[x_1,x_2,x_3]$ and $\parder{}{x_3} \tilde{H}_1 \in K[x_1,x_2,x_3]$.
Furthermore, we may assume that the coefficient of $x_2 x_4$ of 
$\tilde{H}_1$ is nonzero. Now we can apply the same arguments as in the 
case $\parder{}{x_2} \tilde{H}_1, \parder{}{x_3} \tilde{H}_1 \in 
K[x_1,x_2,x_3]$ above.

\end{compactitem}

\item $H$ is as in (3) of corollary \ref{rk4}.

Let $\tilde{H} = T^{-1}H(Tx)$ and let $N$ be the leading principal minor matrix
of size $4$ of $\jac \tilde{H}$.
Just like for the case where $H$ is as in (3) of theorem \ref{rk3} in the proof 
of theorem \ref{rk3np}, we can reduce to the case where only the first row
and the first four columns of $\jac \tilde{H}$ may be nonzero. 

From proposition \ref{evenrk}, it follows that we may assume that 
$\hess (\tilde{H}_1|_{x_1=0})$ is a product of a permutation matrix 
and a diagonal matrix. We distinguish three subcases:
\begin{compactitem}
 
\item $\parder{}{x_2} \tilde{H}_1, \parder{}{x_3} \tilde{H}_1,
\parder{}{x_4} \tilde{H}_1 \in K[x_1,x_2,x_3,x_4]$.

Since $\tr \jac \tilde{H} = 0$, we deduce that $\parder{}{x_1} \tilde{H}_1 \in 
K[x_1,x_2,x_3,x_4]$ as well. We may assume that the coefficient of $x_5 x_6$ of 
$\tilde{H}_1$ is nonzero. 

Just like we reduced to the case where only 
the first $4$ columns of $\jac \tilde{H}$ may be nonzero above, we can reduce
to the case where only the first $6$ columns of $\jac \tilde{H}$ may be nonzero.
Hence the results follow from (ii) of lemma \ref{dim6}.

\item $\parder{}{x_2} \tilde{H}_1, \parder{}{x_3} \tilde{H}_1,
\parder{}{x_4} \tilde{H}_1 \notin K[x_1,x_2,x_3,x_4]$.

Then we may assume that the coefficients of $x_2 x_5$, $x_3 x_6$ and $x_4 x_7$ of
$\tilde{H}_1$ are nonzero. Looking at the coefficients of $x_5^1$, $x_6^1$ and 
$x_7^1$ of the sum of the determinants of the principal minors of size $2$, 
we infer that $\parder{}{x_1} \tilde{H}_2 = 0$, $\parder{}{x_1} \tilde{H}_3 = 0$ 
and $\parder{}{x_1} \tilde{H}_4 = 0$.

From a permuted version of \cite[Lemma 2]{1601.00579}, we deduce that
$\jac_{x_2,x_3,x_4} \allowbreak (\tilde{H}_2,\tilde{H}_3,\tilde{H}_4)$ is nilpotent. 
On account of theorem \ref{rk3np} or \cite[Theorem 3.2]{1601.00579}, 
the second, the third and the fourth row of $\jac \tilde{H}$ are 
dependent over $K$.

By applying \cite[Lemma 2]{1601.00579} twice, we see that we can 
replace $\tilde{H}_1$ by $0$ without affecting the nilpotency 
of $\jac \tilde{H}$. Now (2) follows in a similar manner as in the 
case $\parder{}{x_2} \tilde{H}_1, \parder{}{x_3} \tilde{H}_1,
\parder{}{x_4} \tilde{H}_1 \in K[x_1,x_2,x_3,x_4]$ above.

\item None of the above. 

Then we may assume that $\parder{}{x_2} \tilde{H}_1 \notin 
K[x_1,x_2,x_3,x_4]$ and $\parder{}{x_4} \tilde{H}_1 \in 
K[x_1,x_2,x_3,x_4]$. Furthermore, we may assume that the 
coefficient of $x_2 x_5$ of $\tilde{H}_1$ is nonzero. 

Suppose first that $\parder{}{x_3} \tilde{H}_1 \in K[x_1,x_2,x_3,x_4]$. 
Then we can apply the same argument as in the case $\parder{}{x_2} \tilde{H}_1, 
\parder{}{x_3} \tilde{H}_1, \parder{}{x_4} \tilde{H}_1 \in 
K[x_1,x_2,x_3,x_4]$ above, to reduce to the case where only the 
first $5$ columns of $\jac \tilde{H}$ may be nonzero, which
follows from (ii) of lemma \ref{lem6} below.

Suppose next that $\parder{}{x_3} \tilde{H}_1 \notin K[x_1,x_2,x_3,x_4]$.
Then we may assume that the coefficient of $x_3 x_6$ of $\tilde{H}_1$ 
is nonzero. Now we can apply the same arguments as in the case 
$\parder{}{x_2} \tilde{H}_1, \parder{}{x_3} \tilde{H}_1, \allowbreak
\parder{}{x_4} \tilde{H}_1 \in K[x_1,x_2,x_3]$ above, 
to reduce to the case where only the first $6$ columns of 
$\jac \tilde{H}$ may be nonzero, which
follows from (ii) of lemma \ref{lem6} below. 

\end{compactitem}

\item $H$ is as in (4) of corollary \ref{rk4}.

Let $\tilde{H} = T^{-1}H(Tx)$. Then only the first $5$ columns of 
$\jac \tilde{H}$ may be nonzero, so the results follow from 
(i) of lemma \ref{lem6} below. 

\item $H$ is as in (5) of corollary \ref{rk4}.

Let $\tilde{H} = T^{-1}H(Tx)$. Then only the first $6$ columns of 
$\jac \tilde{H}$ may be nonzero, so the results follow from 
(ii) of lemma \ref{lem6} below. \qedhere

\end{itemize}
\end{proof}

\begin{lemma} \label{lem6}
Let $H$ be quadratic homogeneous, such that $\jac H$ is nilpotent.
\begin{enumerate}[\upshape (i)]

\item If $\chr K \neq 2$ and only the first $5$ columns of $\jac H$
may be nonzero, then the first $5$ rows of $\jac H$ are dependent over $K$, 
and $x + H$ is tame.

\item If $\chr K = 2$ and only the first $6$ columns of $\jac H$
may be nonzero, then the first $6$ rows of $\jac H$ are dependent over $K$, 
and there exists a tame invertible map $x + \bar{H}$ such that 
$\bar{H}$ is quadratic homogeneous and $\jac \bar{H} = \jac H$.

\end{enumerate}
\end{lemma}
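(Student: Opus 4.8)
\emph{Proof proposal.}
The plan is to reduce each of (i) and (ii) to the low‑dimensional classifications: theorem \ref{dim5}, theorem \ref{dim6}, theorem \ref{rk3np} and \cite[Theorem 3.2]{1601.00579}. The first step is to use the column hypothesis to confine $H$ to few variables. In case (i), since $\chr K\neq2$ and columns $6,\ldots,n$ of $\jac H$ vanish, no monomial of any $H_i$ can involve an $x_j$ with $j>5$ (both $x_j^2$ and $x_jx_k$ would show up), so $H_i\in K[x_1,\ldots,x_5]$ for every $i$. In case (ii) one must be slightly careful: in characteristic $2$ the vanishing of column $j$ ($j>6$) only forbids monomials $x_jx_k$ with $k\neq j$, but $x_j^2$ may occur; deleting from each $H_i$ all such monomials $x_j^2$ ($j>6$) produces a quadratic homogeneous $\bar H$ with $\bar H_i\in K[x_1,\ldots,x_6]$ and, because $\partial x_j^2=0$ in characteristic $2$, with $\jac\bar H=\jac H$. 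So in case (ii) we may work with $\bar H$ in place of $H$.

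Next I would split off the leading principal block. With $k=5$ (case (i)) or $k=6$ (case (ii)) we get $\jac H=\left(\begin{smallmatrix}A&\zeromat\\ C&\zeromat\end{smallmatrix}\right)$, where $A$ is the $k\times k$ Jacobian of the quadratic homogeneous $k$‑variable map $(H_1,\ldots,H_k)$ (resp. $(\bar H_1,\ldots,\bar H_k)$) and $C$ collects the remaining rows; raising this block matrix to powers shows $A$ is nilpotent. If $\rk A\le2$ apply \cite[Theorem 3.2]{1601.00579}, if $\rk A=3$ apply theorem \ref{rk3np}, and if $\rk A\ge4$ apply theorem \ref{dim5} (case (i)) resp. theorem \ref{dim6} (case (ii)). In every resulting normal form $A$ is conjugate over $K$ either to a strictly triangular matrix or to one of the listed special forms, and each of these has a zero row (the first row of the special forms, or the last rows in the relevant cases of theorem \ref{rk3np}); hence the rows of $A$ are dependent over $K$. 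Since the first $k$ rows of $\jac H$ are exactly the rows of $A$ followed by zeros, this yields the dependence assertion. The same normal forms give the tameness input we need: in case (i), $x'+(H_1,\ldots,H_5)$ is tame by theorem \ref{dim5} (or, for $\rk A\le3$, by theorem \ref{rk3np}, using $\chr K\neq2$); in case (ii), there is a tame $x'+\hat H$ with $\hat H$ quadratic homogeneous and $\jac\hat H=A$, by theorem \ref{dim6} (or theorem \ref{rk3np}).

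Finally I would reassemble tameness in dimension $n$. Since each component of the confined map lies in $K[x_1,\ldots,x_k]$, setting $\bar{\bar H}:=(\hat H_1,\ldots,\hat H_k,H_{k+1},\ldots,H_n)$ in case (i) (with $\hat H_i=H_i$ there) and $:=(\hat H_1,\ldots,\hat H_k,\bar H_{k+1},\ldots,\bar H_n)$ in case (ii), one checks $\jac\bar{\bar H}=\jac H$ (columns $>k$ vanish, the top $k$ rows are $A$, the bottom rows are $C$), and $x+\bar{\bar H}$ factors as $\Phi\circ\Psi$, where $\Psi=(x_1,\ldots,x_k,x_{k+1}+\bar{\bar H}_{k+1},\ldots,x_n+\bar{\bar H}_n)$ is a triangular (hence tame, elementary‑composable) automorphism and $\Phi$ is the tame $k$‑variable map of the previous step extended by the identity in the remaining coordinates. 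Thus $x+\bar{\bar H}$ is tame; in case (i) this is $x+H$ itself, while in case (ii) it is the required $x+\bar H$. The degenerate ranges $n<5$ (case (i)) and $n<6$ (case (ii)) have $\rk\jac H\le3$ (rank is at most $\min(k-1,n-1)$), so they are covered directly by theorem \ref{rk3np} or \cite[Theorem 3.2]{1601.00579}.

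The main obstacle I expect is the characteristic‑$2$ bookkeeping: identifying precisely which square monomials may be removed without disturbing the Jacobian, and carrying these corrections (together with the possibly altered components $\hat H_i$) through the composition $\Phi\circ\Psi$ so that the output remains quadratic homogeneous with the prescribed Jacobian while staying tame — combined with the routine but necessary check that each normal form produced by theorems \ref{dim5}, \ref{dim6} and \ref{rk3np} genuinely has a $K$‑dependent set of rows.
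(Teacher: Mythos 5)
Your proposal is correct and follows essentially the same route as the paper: pass to the leading principal block of size $5$ (resp.\ $6$), observe it is nilpotent, split on its rank, and invoke theorem \ref{rk3np} resp.\ theorems \ref{dim5}/\ref{dim6}. The paper leaves implicit the details you spell out (removing the stray squares $x_j^2$ in characteristic $2$, reading off $K$-dependence of rows from the normal forms, and reassembling tameness via the factorization $\Phi\circ\Psi$), and your treatment of these points is accurate.
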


\begin{proof}
Let $N$ be the leading principal minor matrix of size $5$ of
$\jac H$ in case of (i), and leading principal minor matrix of size $6$ of
$\jac H$ in case of (ii). From \cite[Lemma 2]{1601.00579}, it follows that $N$
is nilpotent.

If $\rk N \le 3$, then the results follow from theorem \ref{rk3np}.
If $\rk N \ge 4$, then the results follow from theorem \ref{dim5} in case of (i)
and from theorem \ref{dim6} in case of (ii).
\end{proof}

Notice that we only used the case where only the first $4$ columns of $\jac H$
may be nonzero of (i) of lemma \ref{lem6}, which does not require the calculations of
theorem \ref{dim5} to be proved.


\bibliographystyle{quadhmgrk3j}
\bibliography{quadhmgrk3j}

\end{document}